\DeclareMathOperator{\cK}{\ensuremath{\mathcal{K}}}
\DeclareMathOperator{\cN}{\ensuremath{\mathcal{N}}}
\DeclareMathOperator{\cC}{\ensuremath{\mathcal{C}}}
\DeclareMathOperator{\cO}{\ensuremath{\mathcal{O}}}
\DeclareMathOperator{\cS}{\ensuremath{\mathcal{S}}}
\DeclareMathOperator{\st}{\ensuremath{\mathrm{s.t.}}}
\DeclareMathOperator{\bR}{\ensuremath{\mathbb{R}}}
\DeclareMathOperator{\rmint}{\ensuremath{\mathrm{int}}}
\DeclareMathOperator{\sgn}{\ensuremath{\mathrm{sgn}}}
\newtheorem{lemma}{Lemma}
\newtheorem{assumption}{Assumption}
\newtheorem{definition}{Definition}
\newtheorem{theorem}{Theorem}
\newtheorem{remark}{Remark}
\newtheorem{proposition}{Proposition}
\newcommand{\beq}{\begin{equation}}
\newcommand{\eeq}{\end{equation}}
\newcommand{\beqa}{\begin{eqnarray}}
\newcommand{\eeqa}{\end{eqnarray}}
\newcommand{\beqas}{\begin{eqnarray*}}
\newcommand{\eeqas}{\end{eqnarray*}}
\newcommand{\ba}{\begin{array}}
\newcommand{\ea}{\end{array}}
\newcommand{\bi}{\begin{itemize}}
\newcommand{\ei}{\end{itemize}}
\def\bbK{{\mathbb K}}
\def\bd{{\bar d}}
\def\bmu{{\bar \mu}}
\def\bX{{\bar X}}
\def\cK{{\cal K}}
\def\cS{{\cal S}}
\def\Diag{{\rm Diag}}
\def\hd{{\widehat d}}
\def\td{{\widetilde d}}
\def\tr{{\widetilde r}}
\def\tmu{{\widetilde \mu}}
\title{A Newton-CG based barrier method for finding a second-order stationary point of nonconvex conic optimization with complexity guarantees}
\author{
Chuan He
\thanks{
Department of Industrial and Systems Engineering, University of Minnesota, USA (email: {\tt he000233@umn.edu}, {\tt zhaosong@umn.edu}). The work of the second author was partially supported by NSF Award IIS-2211491.}
\and
Zhaosong Lu
\footnotemark[1]
}
\date{November 2, 2021 (Revised: June 23, 2022; September 30, 2022)}
\begin{document}
\maketitle

\begin{abstract}
In this paper we consider finding an approximate second-order stationary point (SOSP) of  nonconvex conic optimization that minimizes a twice differentiable function over the intersection of an affine subspace and a convex cone. In particular, we propose a Newton-conjugate gradient (Newton-CG) based barrier method for finding an $(\epsilon,\sqrt{\epsilon})$-SOSP of this problem. Our  method is not only implementable, but also achieves an iteration complexity of $\cO(\epsilon^{-3/2})$, which matches the best known iteration complexity of second-order methods for finding an $(\epsilon,\sqrt{\epsilon})$-SOSP of unconstrained nonconvex optimization. The operation complexity, consisting of $\cO(\epsilon^{-3/2})$ Cholesky factorizations and $\widetilde{\cO}(\epsilon^{-3/2}\min\{n,\epsilon^{-1/4}\})$ other fundamental operations, is also established for our method.\footnote{The number $n$ is the problem dimension and $\widetilde{\cO}(\cdot)$ represents $\cO(\cdot)$ with logarithmic terms omitted.}
\end{abstract}

\noindent {\bf Keywords} Nonconvex conic optimization, second-order stationary point, barrier method, Newton-conjugate gradient method, iteration complexity, operation complexity

\bigskip

\noindent {\bf Mathematics Subject Classification} 49M05, 49M15, 65F10, 90C06, 90C60

\section{Introduction} \label{intro}
In this paper we consider the conic constrained optimization problem:
\begin{equation}\label{conic-prob}
\min_x \{f(x):Ax=b, x\in\mathcal{K}\},
\end{equation}
where $A\in\bR^{m\times n}$ is of full row rank, $b\in\bR^m$, and $\cK\subseteq\bR^n$ is a closed and pointed convex cone with nonempty interior. Assume that problem \eqref{conic-prob} has at least an optimal solution. In addition, assume that Slater's condition holds for this problem,  i.e., $\Omega^{\rm o} = \{x: Ax=b, x\in\rmint \cK\} \neq\emptyset$,  and $f$ is twice continuously differentiable and nonconvex on $\Omega^{\rm o}$, where $\rmint \cK$ denotes the interior of $\cK$.

In recent years there have been numerous developments on algorithms with complexity guarantees for finding an approximate second-order stationary point (SOSP) of some special cases of problem \eqref{conic-prob}. 
In particular, cubic regularized Newton methods \cite{AABHM17,CGT11ARC,NP06cubic}, trust-region methods \cite{CRRW21trust,CRS16trust,MR17trust},  quadratic regularization method  \cite{BM17QC}, accelerated gradient-type method \cite{CDHS17},  second-order line-search method \cite{RW18}, inexact regularized Newton method \cite{CRS19iN}, and Newton-CG method \cite{ROW20} were proposed for finding an approximate SOSP of a special case of \eqref{conic-prob} with $A=0$, $b=0$ and $\cK=\bR^n$, that is, an unconstrained smooth optimization problem 
\beq \label{unconstr-opt}
\min_x f(x),
\eeq
where $\nabla^2 f$ is assumed to be Lipschitz continuous in a certain level set of $f$.  These methods enjoy an iteration complexity of $\cO(\epsilon^{-3/2})$
for finding an $(\epsilon,\sqrt{\epsilon})$-SOSP $x$ of \eqref{unconstr-opt}  that satisfies
\[
\|\nabla f(x)\|\le\epsilon, \quad \lambda_{\min}(\nabla^2 f(x))\ge-\sqrt{\epsilon},
\]
where $\epsilon\in(0,1)$ is a tolerance parameter, and $\lambda_{\min}(\cdot)$ denotes the minimum eigenvalue of the associated matrix. This iteration complexity is proved to be optimal in \cite{CDHS19lower,CGT18worst}. In addition to iteration complexity, the operation complexity of the methods  \cite{AABHM17,CDHS17,CRRW21trust,ROW20,RW18} was also studied, which is  measured by the amount of {\it fundamental operations} consisting of gradient evaluations and Hessian-vector products of $f$. Under some suitable assumptions, it was shown  that these methods have an operation complexity of
$\widetilde{\cO}(\epsilon^{-7/4})$ for finding an $(\epsilon,\sqrt{\epsilon})$-SOSP of  \eqref{unconstr-opt} with high probability. Similar operation complexity bounds are also achieved by some gradient-based algorithms with  random perturbations
(e.g., see \cite{AL17,JNJ18AGD,XJY17AGD}).

Recently, a log-barrier Newton-conjugate gradient (Newton-CG)  method  was proposed in \cite{OW21} for finding an approximate SOSP of a special case of \eqref{conic-prob} with $A=0$, $b=0$ and $\cK=\bR^n_+$, namely, the problem
\beq \label{boxconstr-prob}
\min_x \{f(x):x\ge0\},
\eeq 
where $\nabla^2 f$ is assumed to be Lipschitz continuous in a certain subset of the interior of $\bR^n_+$. 
Instead of solving \eqref{boxconstr-prob} directly, this method applies a preconditioned Newton-CG method, which is a variant of Newton-CG method \cite{ROW20},  to minimize a log-barrier function associated with \eqref{boxconstr-prob}.
Under some suitable assumptions, it was shown in \cite{OW21} that this method has an iteration complexity of ${\cO}(\epsilon^{-3/2})$ and an operation complexity of $\widetilde{\cO}(\epsilon^{-7/4})$
 for finding an $(\epsilon,\sqrt{\epsilon})$-SOSP $x$ of \eqref{boxconstr-prob} that satisfies
\beq \label{2nd-order-pt}
x>0, \quad \nabla f(x) \ge -\epsilon e, \quad \|\bX\nabla f(x)\|_\infty \le \epsilon, \quad \lambda_{\min}(\bX\nabla^2f(x)\bX) \ge -\sqrt{\epsilon}
\eeq
with high probability, where $e$ is the all-ones vector, and $\bX$ is a diagonal matrix whose $i$th diagonal entry is $\min\{x_i,1\}$.  
 Besides, the earlier work \cite{BCY2015} proposed an interior-point method with an iteration complexity of $\cO(\epsilon^{-3/2})$ for finding a point $x$ satisfying the first, third and last relations in \eqref{2nd-order-pt} with $\bar{X}$ being replaced by $X=\Diag(x)$, where $\Diag(x)$ is a diagonal matrix with $x$ on its diagonal. This method solves a preconditioned second-order trust-region subproblem per iteration. More recently, a projected Newton-CG method with complexity guarantees was proposed in \cite{XW21pN} for finding an approximate SOSP of a more general form of \eqref{boxconstr-prob} with only a subvector of $x$ being nonnegative.

In addition, an interior-point method was proposed in \cite{HLY19} for finding an approximate SOSP of a special case of \eqref{conic-prob} with $\cK=\bR^n_+$, that is, {a linearly constrained smooth optimization problem}
\beq \label{lpconstr-prob}
\min_x \{f(x):Ax=b,\ x\ge0\}.
\eeq 
This method solves a preconditioned second-order trust-region subproblem per iteration, which minimizes a possibly nonconvex quadratic function over the intersection of a linear subspace and an Euclidean ball. Under some suitable assumptions, it was shown  in \cite{HLY19} that this method has an iteration complexity of $\cO(\epsilon^{-3/2})$ for finding an 
{$(\epsilon,\sqrt{\epsilon})$-SOSP}
$x$ of \eqref{lpconstr-prob} that satisfies
\[
\ba{l}
Ax=b,\ x>0, \ \nabla f(x)+A^T\lambda \ge -\epsilon e, \ \|X(\nabla f(x)+A^T\lambda)\|_\infty \le \epsilon, \\ [8pt]
d^T(X\nabla^2f(x)X+\sqrt{\epsilon} I)d \ge 0 \quad \forall  d \in \{d:AXd=0\} 
\ea
\]
for some $\lambda\in\bR^m$. It is worth mentioning that this method requires solving the associated trust-region subproblems {\it exactly}, which is typically an impossible task. Thus, this method is not implementable in general.

Besides, several methods including trust-region methods \cite{BSS87,CLY02}, sequential quadratic programming method \cite{BL95}, two-phase method \cite{CGT19eq,CM19poly}, penalty method \cite{GEB22cpt}, and augmented Lagrangian (AL) type methods \cite{AHRS17,BHR18AL,HLP22,S19iAL,XW21PAL} were developed for finding an SOSP of nonconvex equality constrained optimization. In addition, a projected gradient descent method with random perturbations was proposed in \cite{LRYHH20} for nonconvex optimization with linear inequality constraints.

The aforementioned methods are not suitable for finding an approximate SOSP of problem \eqref{conic-prob} in general. On the other hand, in the concurrent work \cite{DS21HBA}, the authors proposed a Hessian barrier algorithm and studied its iteration complexity for finding an approximate SOSP of problem~\eqref{conic-prob}. This algorithm nicely generalizes the cubic regularized Newton method \cite{NP06cubic} to problem \eqref{conic-prob}. However, it requires solving many cubic regularized projected Newton subproblems {\it exactly}, which is typically impossible to implement. To the best of our knowledge, there is yet no implementable method with complexity guarantees in the literature for finding an approximate SOSP of problem \eqref{conic-prob}.

Inspired by \cite{BCY2015,HLY19,OW21,ROW20}, in this paper we develop an implementable method with complexity guarantees for finding an approximate SOSP of problem \eqref{conic-prob}. Our main contributions are as follows. 
\bi
\item We introduce a novel notion of an approximate SOSP of \eqref{conic-prob}, by the use of the self-concordant barrier function associated with the cone $\cK$ and the study of optimality conditions of \eqref{conic-prob}.
\item We propose an implementable Newton-CG based barrier method for finding an approximate SOSP of \eqref{conic-prob}, whose main operations consist of Cholesky factorizations and other fundamental operations including Hessian-vector products of $f$, matrix multiplications,  and backward or forward substitutions to a triangular linear system.
This method generalizes the log-barrier Newton-CG method \cite{OW21} proposed for \eqref{boxconstr-prob} to the optimization problems with affine and general conic constraints, and thus provides an affirmative answer to the open question raised by O'Neill and Wright at the end of \cite{OW21}.
\item We show that under mild assumptions, the proposed method achieves an iteration complexity of $\cO(\epsilon^{-3/2})$ and also an operation complexity, consisting of $\cO(\epsilon^{-3/2})$ Cholesky factorizations and $\widetilde{\cO}(\epsilon^{-3/2}\min\{n,\epsilon^{-1/4}\})$ other fundamental operations mentioned above, for finding an $(\epsilon,\sqrt{\epsilon})$-SOSP of \eqref{conic-prob} with high probability. When $\cK$ is the nonnegative orthant, these complexity results match the best known ones for finding an $(\epsilon,\sqrt{\epsilon})$-SOSP of \eqref{unconstr-opt} or \eqref{boxconstr-prob} with high probability (e.g., see \cite{CRRW21trust,OW21,ROW20}).
\item The complexity results of our method are established under the assumption that $\nabla^2 f$ is  \textit{locally Lipschitz continuous} in a certain subset of $\Omega^{\rm o}$ (see Assumption~\ref{main-assump}(b)).  Such an assumption is weaker than the one based on the global Lipschitz continuity of $\nabla^2 f$ usually imposed in the literature (e.g., see \cite{OW21}). As a consequence, our method is applicable to the problems with a broader class of objective functions $f$ (see Section \ref{sec:complexity} for more discussion).
\ei

The rest of this paper is organized as follows. In Section \ref{sec:preliminary}, we introduce some notation and study some properties of logarithmically homogeneous self-concordant barrier functions. In  Section \ref{sec:approx-opt-cond}, we study optimality conditions of problem \eqref{conic-prob} and introduce an approximate counterpart of them. In Section \ref{sec:algorithm}, we propose a Newton-CG based barrier method. Finally, we establish iteration and operation complexity results for the proposed method in Section \ref{sec:complexity}.

\section{Notation and preliminaries} \label{sec:preliminary} 

In this section we introduce some notation and also study some properties of a logarithmically homogeneous self-concordant barrier function for a closed convex cone that will be used in this paper. 

Throughout this paper, let $\bR^n$ denote the $n$-dimensional Euclidean space and $\langle\cdot,\cdot\rangle$ denote the standard inner product. 
We use $\|\cdot\|$ to denote the Euclidean norm of a vector or the spectral norm of a matrix.
We denote by $\lambda_{\min}(H)$ the minimum eigenvalue of a real symmetric matrix $H$. For any two real symmetric matrices $M_1$ and $M_2$, $M_1 \preceq M_2$ means that $M_2-M_1$ is positive semidefinite. For any positive semidefinite matrix $M$, $M^{1/2}$ denotes a positive semidefinite matrix such that $M=M^{1/2}M^{1/2}$. For the closed convex cone $\cK$, its interior and dual cone are  denoted by $\rmint\cK$ and $\cK^*$, respectively. For any $x\in\cK$, the normal cone of $\cK$ at $x$ is denoted by $\mathcal{N}_{\cK}(x)$.  For any $t\in \mathbb{R}$, we let ${\rm sgn}(t)$ be $1$ if $s \ge 0$ and let it be $-1$ otherwise. In addition, we use order notation $\mathcal{O}(\cdot)$ in its usual sense, and notation $\widetilde{\mathcal{O}}(\cdot)$ to represent the order with hidden logarithmic factors.

Logarithmically homogeneous self-concordant (LHSC) barrier functions have played a crucial role in the development of interior point methods for solving convex conic programming (see the monograph \cite{NN94}). The design and analysis of the Newton-CG based barrier method in this paper also heavily rely on an  LHSC barrier function. Throughout this paper, we assume that the cone $\cK$ is equipped with a \emph{$\vartheta$-logarithmically homogeneous 
self-concordant} ($\vartheta$-LHSC) barrier function $B$ for some $\vartheta \ge 1$.
That is, $B: \rmint \cK \to \bR$ satisfies the following conditions:
\begin{enumerate}[{\rm (i)}]
\item $B$ is  convex and three times continuously differentiable in $\rmint\cK$,
 and moreover, $|\varphi^{\prime\prime\prime}(0)|\le 2(\varphi^{\prime\prime}(0))^{3/2}$ holds for all $x\in\rmint\cK$ and $u\in\bR^n$, where $\varphi(t)=B(x+tu)$;
\item $B$ is a \emph{barrier function} for $\cK$, that is, $B(x)$ goes to infinity as $x$ approaches the boundary of $\cK$;
\item $B$ satisfies the \emph{logarithmically homogeneous property}:
\begin{equation}\label{def:log-hm-self-concordant-barrier}
B(tx) = B(x)-\vartheta \ln t\ \quad\forall x\in\rmint \cK, t>0.
\end{equation}
\end{enumerate}
For the details of LHSC barrier function and its examples, we refer the reader to \cite{NN94} and the references therein.

For any $x\in\rmint\cK$, the function $B$ induces the following so-called local norms:
\begin{eqnarray}
\|v\|_x&:=& \left(v^T\nabla^2 B(x)v\right)^{1/2}\ \quad \forall v\in\bR^n, \nonumber \\ [4pt]
\|v\|_x^*&:=& \left(v^T[\nabla^2 B(x)]^{-1}v\right)^{1/2}\ \quad \forall v\in\bR^n, \nonumber \\ [4pt]
\|M\|^*_x &:=& \max\limits_{\|v\|_x \le 1} \|Mv\|^*_x\ \quad \forall M\in \bR^{n\times n}. \label{M-norm}
\end{eqnarray}

In the remainder of this section, we study some properties of the $\vartheta$-LHSC barrier function $B$ that will be used subsequently in this paper.

\begin{lemma}\label{lem:barrier-property}
Let $x\in\rmint\cK$ and $\beta \in (0,1)$ be given. Then the following statements hold for the $\vartheta$-LHSC barrier function $B$.
\begin{enumerate}[{\rm (i)}]
\item $(\|\nabla B(x)\|_x^*)^2=-x^T\nabla B(x)=\|x\|_x^2=\vartheta$.
\item $-\nabla B(x)\in\rmint\cK^*$.
\item $\{y:\|y-x\|_x< 1\}\subset \rmint\cK$.
\item For any $y$ satisfying $\|y-x\|_x\le \beta$, it holds that
\begin{equation}
(1-\beta)\|v\|_x^*\le \|v\|_y^* \le (1-\beta)^{-1}\|v\|_x^*\ \quad \forall v\in\bR^n.\label{ineq:dual-local-norm-iterate-ppty}
\end{equation}
\item $B(x+d)\le B(x)+\nabla B(x)^Td+\frac{1}{2}d^T\nabla^2B(x)d+\frac{1}{3(1-\beta)}\|d\|_x^3 \ \text{whenever} \  \|d\|_x\le\beta$.
\item $\{s: \|s+\nabla B(x)\|_x^*\le 1\} \subseteq \cK^*$.
\end{enumerate}
\end{lemma}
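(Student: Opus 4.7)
The plan is to establish the six claims in sequence, drawing on different aspects of the $\vartheta$-LHSC structure. For (i), I would differentiate the log-homogeneous identity $B(tx) = B(x) - \vartheta\ln t$ in $t$: evaluating the first derivative at $t=1$ yields the Euler-type identity $x^T\nabla B(x) = -\vartheta$. Taking $\nabla_x$ on both sides of the same identity gives $\nabla B(tx) = t^{-1}\nabla B(x)$, and differentiating this in $t$ at $t=1$ produces $\nabla^2 B(x)\, x = -\nabla B(x)$. The three equalities in (i) then follow by direct substitution into the definitions of $\|x\|_x$ and $\|\nabla B(x)\|_x^*$. Part (ii) is the classical fact that $-\nabla B(x)$ lies in the interior of the dual cone; I would establish it using the identity $-\nabla B(x) = \nabla^2 B(x)\,x$ from (i) together with convexity of $B$ and the barrier property (so that $B(x + ty)$ cannot stay bounded along a direction $y$ that would push $-\nabla B(x)$ out of $\cK^*$) to conclude $\langle -\nabla B(x), y\rangle > 0$ for every $y \in \cK\setminus\{0\}$.

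For (iii)--(v), the plan is to use standard self-concordance machinery. For (iii), I restrict $B$ to the segment from $x$ through any $y$ with $\|y-x\|_x < 1$: the one-variable function $\varphi(t) = B(x + t(y-x))$ is self-concordant on its domain with $\varphi''(0) = \|y-x\|_x^2 < 1$, and integrating the defining inequality $|\varphi'''| \le 2(\varphi'')^{3/2}$ shows that $\varphi''$ remains finite through $t=1$, placing $y$ in the interior of the domain of $B$, i.e., in $\rmint\cK$. For (iv), I would again integrate the self-concordance inequality along the segment from $x$ to $y$ to obtain the two-sided Hessian estimate
\[
(1-\|y-x\|_x)^2\, \nabla^2 B(x) \preceq \nabla^2 B(y) \preceq (1-\|y-x\|_x)^{-2}\,\nabla^2 B(x);
\]
inverting these matrices reverses the inequalities and, since $\|y-x\|_x \le \beta$, immediately yields \eqref{ineq:dual-local-norm-iterate-ppty}. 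For (v), I plan to apply the third-order Taylor expansion with integral remainder to $\varphi(t) = B(x + td)$, bound $|\varphi'''(t)|$ by $2\varphi''(t)^{3/2}$, and use (iv) to control $\varphi''(t) \le \|d\|_x^2/(1 - t\|d\|_x)^2$; the resulting elementary integral (with $t\|d\|_x \le \beta$) produces the stated $\tfrac{1}{3(1-\beta)}\|d\|_x^3$ remainder.

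For (vi), which I expect to be the main obstacle, the natural starting point is the splitting
\[
\langle s, y\rangle = \langle -\nabla B(x), y\rangle + \langle s + \nabla B(x), y\rangle \ge \langle -\nabla B(x), y\rangle - \|s + \nabla B(x)\|_x^*\, \|y\|_x \ge \langle -\nabla B(x), y\rangle - \|y\|_x
\]
for $y \in \cK$ and any $s$ with $\|s+\nabla B(x)\|_x^* \le 1$, so it suffices to show the sharp inequality $\langle -\nabla B(x), y\rangle \ge \|y\|_x$ for every $y \in \cK$. To obtain this sharp bound, my plan is to analyze the ray $t\mapsto x - ty$: by pointedness of $\cK$ and part (ii), this ray exits $\cK$ at some finite $\tau>0$ where $B$ blows up, and self-concordance of the one-variable restriction together with the log-homogeneity identities from (i) will pin down $\tau$ in terms of the local geometry at $x$, yielding the required estimate. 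Once this sharp inequality is in hand, $\langle s, y\rangle \ge 0$ for all $y \in \cK$, so $s \in \cK^*$ by definition.
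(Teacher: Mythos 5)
Parts (i)--(v) of your plan are sound and track the paper's proof closely: (i) is exactly the differentiation of \eqref{def:log-hm-self-concordant-barrier}; (ii)--(iv) are the standard self-concordance facts the paper cites from Nesterov--Nemirovskii; and for (v), your integral-remainder bound with $\varphi''(t)\le \|d\|_x^2/(1-t\|d\|_x)^2$ does give $\int_0^1\frac{(1-t)^2}{(1-t\|d\|_x)^3}\,dt=\sum_{k\ge3}\|d\|_x^{k-3}/k\le\frac{1}{3(1-\beta)}$, which is algebraically the same estimate the paper extracts from $\rho(t)=-\ln(1-t)-t$ (one minor slip: what you need there is the Hessian comparison $\nabla^2 B(x+td)\preceq(1-t\|d\|_x)^{-2}\nabla^2 B(x)$ itself, which underlies (iv), not the dual-norm statement (iv) as written).

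Part (vi) is where the proposal has a genuine gap. Your reduction to the inequality $\langle -\nabla B(x),y\rangle\ge\|y\|_x$ for all $y\in\cK$ is a correct and clean restatement of (vi), but the proposed proof of that inequality via the ray $t\mapsto x-ty$ does not go through. Self-concordance of $\varphi(t)=B(x-ty)$ plus blow-up at $\tau$ does \emph{not} force $\varphi'(0)\ge\sqrt{\varphi''(0)}$: for example $\varphi(t)=-\ln(\tau-t)-\ln(\tau+t)$ is self-concordant on $(-\tau,\tau)$, blows up at $\tau$, yet has $\varphi'(0)=0<\sqrt{\varphi''(0)}$. Nor can $\tau$ be ``pinned down'' from local data at $x$: already in $\cK=\bR^n_+$ one has $\tau=\min_i x_i/y_i$, while $\|y\|_x=(\sum_i y_i^2/x_i^2)^{1/2}$ and $\langle -\nabla B(x),y\rangle=\sum_i y_i/x_i$ are three genuinely different quantities, so no identity linking $\tau$, $\varphi'(0)$ and $\varphi''(0)$ is available. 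The log-homogeneity relations $b(t)=\vartheta+t\varphi'(t)$ with $b(t)=-\nabla B(x-ty)^Tx$ reduce, after differentiation, to tautologies and give no extra leverage. The missing ingredient is duality: the conjugate $B^*$ is a $\vartheta$-LHSC barrier for $-\cK^*$ with $\nabla^2 B^*(\nabla B(x))=[\nabla^2 B(x)]^{-1}$, so the dual local norm $\|\cdot\|_x^*$ is precisely the primal local norm of $B^*$ at the point $\nabla B(x)\in\rmint(-\cK^*)$. Applying your own part (iii) to $B^*$ at $\nabla B(x)$, taking closures and flipping the sign of $s$ then yields (vi) directly; this is exactly the route the paper takes, and it also furnishes the cleanest proof of your scalar inequality if you prefer to keep that intermediate step.
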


\begin{proof}
The proof of statements (i), (ii), and (iii) can be found in \cite[Proposition~2.3.4]{NN94}, \cite[Theorem~2.4.2]{NN94}, and \cite[Theorem~2.1.1]{NN94}, respectively.

We now prove statement (iv). Let $y$ be such that $\|y-x\|_x\le \beta$. It follows from \cite[Theorem 2.2.1]{NN94} that  
\[
(1-\beta)^2\nabla^2 B(x)\preceq \nabla^2 B(y)\preceq (1-\beta)^{-2}\nabla^2 B(x),
\]
which, together with the positive definiteness of $\nabla^2 B(x)$ and $\nabla^2 B(y)$, implies that 
\[
(1-\beta)^2[\nabla^2 B(x)]^{-1}\preceq [\nabla^2 B(y)]^{-1}\preceq (1-\beta)^{-2}[\nabla^2 B(x)]^{-1}.
\]
Statement (iv) then immediately follows from these relations.

We next prove statement (v). Let $d\in\bR^n$ be such that $\|d\|_x\le\beta$. By \cite[Theorem 4.1.8]{Nes03},  one has 
\beq \label{uppbnd-B}
B(x+d)\le B(x) +\nabla B(x)^Td + \rho(\|d\|_x),
\eeq
where $\rho(t)=-\ln(1-t)-t$. Notice that $\rho(t)=\sum^\infty_{k=2} t^k/k$ for each $t\in(0,1)$, and $\|d\|^2_x=d^T\nabla^2 B(x)d$. Using these, $\|d\|_x\le\beta<1$ and \eqref{uppbnd-B}, we obtain that
\[
\begin{array}{rcl}
B(x+d)&\le&B(x) +\nabla B(x)^Td + \frac{1}{2} d^T\nabla^2 B(x)d + \sum_{k=3}^\infty\frac{\|d\|_x^k}{k}\\[5pt]
&\le&B(x) +\nabla B(x)^Td + \frac{1}{2} d^T\nabla^2 B(x)d + \frac{\|d\|_x^3}{3} \sum_{k=3}^\infty\beta^{k-3}\\[5pt]
&=&B(x) +\nabla B(x)^Td + \frac{1}{2} d^T\nabla^2 B(x)d + \frac{\|d\|_x^3}{3(1-\beta)}.
\end{array}
\]
Hence, statement (v) holds as desired.

We finally prove statement (vi). By \cite[Theorem 2.4.1]{NN94}, we know that $B^*$ is a $\vartheta$-LHSC barrier function for the cone $-\cK^*$, where $B^*$ is the conjugate of $B$ defined as
\[
B^*(y)=\sup_{x\in\rmint\cK}\{\langle y,x\rangle-B(x)\}\ \quad \forall y \in \rmint(-\cK^*).
\]
For any $y\in\rmint(-\cK^*)$, let $\|\cdot\|'_y$ be the local norm induced by $B^*$, that is, $\|s\|'_y=\sqrt{s^T\nabla^2 B^*(y)s}$ for any $s\in\bR^n$. Since $x\in\rmint\cK$, notice from statement (ii) that $\nabla B(x)\in\rmint(-\cK^*)$. Also, from the proof \cite[Theorem 2.4.2]{NN94},  one has $\nabla^2 B^*(\nabla B(x))=[\nabla^2 B(x)]^{-1}$. It then follows that 
\[
\|s\|'_{\nabla B(x)}=\sqrt{s^T\nabla^2 B^*(\nabla B(x))s}=\sqrt{s^T[\nabla^2 B(x)]^{-1}s}=\|s\|^*_x\ \quad \forall s\in\bR^n.
\]
In view of this and statement (iii) with $\cK$ and $x$ replaced respectively by $-\cK^*$ and $\nabla B(x)$, one has that
\[
\{s:\|s-\nabla B(x)\|_x^* <1\}=\{s:\|s-\nabla B(x)\|'_{\nabla B(x)} <1\}\subset\rmint (-\cK^*).
\]
Taking the closure on both sides of this relation implies that statement (vi) holds.
\end{proof}

The following lemma shows that $[\nabla^2 B(x)]^{-1}$ is bounded in the intersection of a unit sphere and $\rmint \cK$, which is crucial for the development of this paper.
	
\begin{lemma} \label{lem:gamma}
The matrix $[\nabla^2 B(x)]^{-1}$ is bounded in the intersection of a unit sphere and $\rmint \cK$, that is,  $\gamma<\infty$, 
where
\beq \label{gamma}
\gamma := \underset{x\in\rmint \cK,\|x\|=1}{\sup}\|[\nabla^2 B(x)]^{-1}\|.
\eeq
\end{lemma}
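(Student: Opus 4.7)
The plan is to argue by contradiction, using the compactness of $\cK\cap\bS^{n-1}$, the Dikin-ellipsoid property from Lemma~\ref{lem:barrier-property}(iii), and the pointedness of $\cK$. Suppose for contradiction that $\gamma=+\infty$; then there exists a sequence $\{x_k\}\subseteq\rmint\cK$ with $\|x_k\|=1$ and $\|[\nabla^2 B(x_k)]^{-1}\|\to\infty$. The crucial first move is to pick $v_k$ to be a unit eigenvector of $[\nabla^2 B(x_k)]^{-1}$ corresponding to its largest eigenvalue (equivalently, of $\nabla^2 B(x_k)$ for its smallest). Then
\[
\|v_k\|_{x_k}^2 \;=\; v_k^T\nabla^2 B(x_k)v_k \;=\; \lambda_{\min}(\nabla^2 B(x_k)) \;=\; \frac{1}{\|[\nabla^2 B(x_k)]^{-1}\|} \;\to\; 0,
\]
which is exactly the statement that the Dikin ellipsoid at $x_k$ becomes arbitrarily elongated in the direction $v_k$.

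Next, since $\cK\cap\bS^{n-1}$ and $\bS^{n-1}$ are compact (the former being a closed and bounded subset of $\bR^n$), I would extract a subsequence along which $x_k\to\bar x\in\cK\cap\bS^{n-1}$ and $v_k\to\bar v\in\bS^{n-1}$. For any fixed $T>0$, once $k$ is large enough that $\|v_k\|_{x_k}<1/T$, we have $\|Tv_k\|_{x_k}<1$, so Lemma~\ref{lem:barrier-property}(iii) yields $x_k\pm Tv_k\in\rmint\cK\subseteq\cK$. Passing to the limit in $k$ and using that $\cK$ is closed gives $\bar x\pm T\bar v\in\cK$ for every $T>0$.

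Finally, I would invoke the cone structure and pointedness of $\cK$. Since $\cK$ is a cone, $\bar x/T\pm\bar v=(1/T)(\bar x\pm T\bar v)\in\cK$; sending $T\to\infty$ and using closedness of $\cK$ gives $\pm\bar v\in\cK$, hence $\bar v\in\cK\cap(-\cK)=\{0\}$ by pointedness, contradicting $\|\bar v\|=1$. Therefore $\gamma<\infty$.

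The main obstacle, and essentially the only place that requires real care, is engineering the link between the spectral blow-up and the Dikin-ellipsoid diameter: it is essential to choose $v_k$ as a top eigenvector of $[\nabla^2 B(x_k)]^{-1}$ so that $\|v_k\|_{x_k}\to 0$, because an arbitrary unit sequence with $v_k^T[\nabla^2 B(x_k)]^{-1}v_k\to\infty$ need not satisfy $\|v_k\|_{x_k}\to 0$. Once that alignment is made, compactness, closedness, and pointedness of $\cK$ finish the argument in two or three lines, and the only property of the LHSC barrier $B$ that is actually used is the inclusion $\{y:\|y-x\|_x<1\}\subset\rmint\cK$.
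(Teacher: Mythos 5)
Your proposal is correct and takes a genuinely different route from the paper. The paper's argument is constructive: it fixes an interior point $y$, shows the Minkowski function $\pi_x(y)$ of $\cK$ with pole at $x$ is bounded away from $1$ uniformly over unit-norm $x\in\rmint\cK$, and then invokes the quantitative self-concordance inequality $\nabla^2 B(y)\preceq\left(\frac{\vartheta+2\sqrt{\vartheta}}{1-\pi_x(y)}\right)^2\nabla^2 B(x)$ to produce an explicit uniform bound on $\|[\nabla^2 B(x)]^{-1}\|$ in terms of $\|[\nabla^2 B(y)]^{-1}\|$, $\vartheta$, $\|y\|$, and the inradius of $\cK$ at $y$. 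Your argument is a compactness-and-contradiction proof that uses only the Dikin-ellipsoid inclusion of Lemma~\ref{lem:barrier-property}(iii) together with the closedness, conic structure, and pointedness of $\cK$; it never touches the barrier parameter $\vartheta$, logarithmic homogeneity, or any quantitative self-concordance estimate beyond what is already encoded in Lemma~\ref{lem:barrier-property}(iii). Your route is more elementary and makes the geometric mechanism transparent (a blowing-up Dikin ellipsoid anchored at a point of $\cK\cap\bS^{n-1}$ forces an entire line into $\cK$, violating pointedness), at the price of being nonconstructive: it establishes $\gamma<\infty$ but yields no explicit bound, whereas the paper's proof does. Your remark about taking $v_k$ to be a top eigenvector of $[\nabla^2 B(x_k)]^{-1}$ is on point; an arbitrary unit sequence with $v_k^T[\nabla^2 B(x_k)]^{-1}v_k\to\infty$ need not satisfy $\|v_k\|_{x_k}\to 0$, so that alignment is indeed the load-bearing choice.
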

	
\begin{proof}
Let $x\in\rmint \cK$ with $\|x\|=1$ be arbitrarily chosen, $y$ a fixed interior point of $\cK$, and $\tr=\|x-y\|$. Then there exists some $r>0$ such that the  Euclidean ball centered at $y$ with radius $r$ is included in $\cK$. By this and the convexity of $\cK$, one can observe that $x+\alpha(y-x) \in \cK$ for all $\alpha\in[0, (\tr+r)/\tr]$.\footnote{By convention, $\delta/0$ is set to $\infty$ for any $\delta>0$ throughout this paper.} It then follows that 
$\pi_x(y) \le \tr/(\tr+r)$, where $\pi_x(\cdot)$ is  the Minkowski function of $\cK$ with the pole at $x$ defined as
\[
\pi_x(z) = \inf\{t>0: x+t^{-1}(z-x) \in \cK\} \quad \forall z.
\]
Notice that $\|x\|=1$ and $\tr=\|x-y\|$. Hence, $\tr\le 1+\|y\|$, which together with $\pi_x(y) \le \tr/(\tr+r)$ implies that 
\[
\pi_x(y) \le \frac{1+\|y\|}{1+\|y\|+r} =: \Delta_y.
\]
By this and \cite[Equation (3.16)]{N04IPM}, one has 
\vspace{-1mm}
\[
\nabla^2 B(y) \preceq \left(\frac{\vartheta+2\sqrt{\vartheta}}{1-\pi_x(y)}\right)^2\nabla^2 B(x) 
\preceq \left(\frac{\vartheta+2\sqrt{\vartheta}}{1-\Delta_y}\right)^2\nabla^2 B(x).
\]
It follows that 
\[
0 \preceq [\nabla^2 B(x)]^{-1} \preceq \left(\frac{\vartheta+2\sqrt{\vartheta}}{1-\Delta_y}\right)^2 [\nabla^2 B(y)]^{-1}.
\]
Using this,  $\Delta_y\in(0,1)$ and the arbitrary choice of $x$, we conclude that 
\vspace{-1mm}
\[
\underset{x\in\rmint \cK,\|x\|=1}{\sup}\|[\nabla^2 B(x)]^{-1}\| \le \left(\frac{\vartheta+2\sqrt{\vartheta}}{1-\Delta_y}\right)^2 \|[\nabla^2 B(y)]^{-1}\| < \infty.
\]
\end{proof}

The following theorem shows that $\|[\nabla^2 B(x)]^{-1}\|$ is at most in the order of $\|x\|^2$ 
for all $x\in\rmint\cK$.

\begin{theorem} \label{bound-nabla-2B}
Let $\gamma$ be defined in \eqref{gamma}. Then
$\|[\nabla^2 B(x)]^{-1}\|\le \gamma\|x\|^2$  for every $x\in\rmint\cK$, and $[\nabla^2 B(x)]^{-1}$ is bounded in any nonempty bounded subset of $\rmint \cK$.
\end{theorem}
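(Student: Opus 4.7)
The plan is to exploit the logarithmic homogeneity of $B$ to obtain a scaling identity for $\nabla^2 B$ and then reduce the bound to the unit-sphere case already handled by Lemma~\ref{lem:gamma}.

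First I would differentiate the identity $B(tx)=B(x)-\vartheta\ln t$ twice in $x$. Gradient in $x$ gives $t\nabla B(tx)=\nabla B(x)$, and a second differentiation in $x$ (with $t$ held fixed) yields $t^{2}\nabla^{2}B(tx)=\nabla^{2}B(x)$, i.e.,
\[
\nabla^{2}B(tx)=t^{-2}\nabla^{2}B(x)\qquad\forall\,x\in\rmint\cK,\ t>0.
\]
Inverting gives $[\nabla^{2}B(tx)]^{-1}=t^{2}[\nabla^{2}B(x)]^{-1}$, which is the key homogeneity relation.

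Next, given an arbitrary $x\in\rmint\cK$, I would observe that $x\neq 0$ (since $\cK$ is pointed with nonempty interior, so $0$ lies on the boundary), and set $\hat x:=x/\|x\|$. Because $\rmint\cK$ is invariant under multiplication by positive scalars, $\hat x\in\rmint\cK$ with $\|\hat x\|=1$. Applying the scaling identity with $t=\|x\|$ at the point $\hat x$, we get $[\nabla^{2}B(x)]^{-1}=\|x\|^{2}[\nabla^{2}B(\hat x)]^{-1}$. Taking spectral norms and invoking the definition of $\gamma$ in \eqref{gamma} yields
\[
\|[\nabla^{2}B(x)]^{-1}\|=\|x\|^{2}\|[\nabla^{2}B(\hat x)]^{-1}\|\le\gamma\|x\|^{2},
\]
which is the first claim.

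For the second claim, on any bounded subset $S\subset\rmint\cK$ we have $\sup_{x\in S}\|x\|<\infty$, so the previous inequality immediately gives $\sup_{x\in S}\|[\nabla^{2}B(x)]^{-1}\|\le\gamma\sup_{x\in S}\|x\|^{2}<\infty$. I do not anticipate any genuine obstacle here: the only subtlety is making sure that $\rmint\cK$ is closed under positive scaling so that $\hat x$ lies in the set where $\gamma$ is defined, and that $0\notin\rmint\cK$ so the normalization is legitimate, both of which are standard consequences of $\cK$ being a pointed convex cone with nonempty interior.
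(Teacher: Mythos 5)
Your proof is correct and follows essentially the same route as the paper: differentiate the logarithmic homogeneity identity twice to obtain $t^2\nabla^2 B(tx)=\nabla^2 B(x)$, specialize to $t=1/\|x\|$ (equivalently, your substitution at $\hat x$ with $t=\|x\|$), invert, and invoke the definition of $\gamma$. Your extra observations that $0\notin\rmint\cK$ and that $\rmint\cK$ is scale-invariant are minor justifications the paper leaves implicit, but they do not constitute a different argument.
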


\begin{proof}
Differentiating both sides of \eqref{def:log-hm-self-concordant-barrier} twice with respect to $x$, we have
\[
t^2 \nabla^2 B(tx) = \nabla^2 B(x)\  \quad\forall x\in\rmint \cK, \ t>0.
\]
Letting $t=1/\|x\|$, we further obtain that
\[
\frac{1}{\|x\|^2}\nabla^2 B\left(\frac{x}{\|x\|}\right)=\nabla^2 B(x)\ \quad\forall x\in\rmint \cK.
\]
It then follows that  
\[
[\nabla^2 B(x)]^{-1} = \|x\|^2 [\nabla^2 B(x/\|x\|)]^{-1}\ \quad\forall x\in\rmint \cK,
\]
which together with \eqref{gamma} implies that $\|[\nabla^2 B(x)]^{-1}\|\le \gamma\|x\|^2$ for every $x\in\rmint \cK$. It immediately follows that $[\nabla^2 B(x)]^{-1}$ is bounded in any nonempty bounded subset of $\rmint \cK$.
\end{proof}

Note that $[\nabla^2 B(x)]^{-1}$ is well-defined in $\rmint \cK$ but undefined on the boundary of $\cK$. To capture its behavior as $x$ approaches the boundary of $\cK$, we next introduce a terminology called the {\it limiting inverse of the Hessian of B}, denoted by $\nabla^{-2} B$, which is a generalization of $[\nabla^2 B]^{-1}$.

\begin{definition}[{{\bf limiting inverse of the Hessian of $B$}}]\label{inverse-B2}
\vspace{-1mm}
\beq \label{nablaB-2}
\nabla^{-2} B (x)=\left\{M: M=\lim\limits_{k\to\infty}[\nabla^2B(x^k)]^{-1}\  \mbox{for some} \  
\{x^k\}\subset\rmint\cK \ \mbox{with} \ x^k\to x \ \mbox{as} \ k\to\infty \right\}\ \quad  \forall x\in\cK. 
\vspace{-2.5mm}
\eeq
\end{definition}
From Theorem \ref{bound-nabla-2B}, we know that $[\nabla^2 B(x)]^{-1}$ is bounded in any nonempty bounded subset of $\rmint \cK$, which implies that $\nabla^{-2} B (x) \neq \emptyset$ for every $x\in\cK$. In addition, since $[\nabla^2 B(x)]^{-1}$ is continuous in $\rmint \cK$, one can see that $\nabla^{-2} B (x)$ becomes a singleton $\{[\nabla^2 B(x)]^{-1}\}$ for any $x\in \rmint \cK$. Thus, $\nabla^{-2} B$ is indeed a generalization of $[\nabla^2 B]^{-1}$.

Notice that Lemma \ref{lem:barrier-property}(iii) only holds at any $x\in\rmint\cK$. With the aid of the limiting inverse of $\nabla^2 B$, we next generalize Lemma \ref{lem:barrier-property}(iii) to the one that holds at every point in $\cK$.
\vspace{-1mm}
\begin{theorem} \label{boundary-ball} 
For any $x\in\cK$, it holds that
\vspace{-1mm}
\[
\{x+M^{1/2}d : \|d\|<1\}\subseteq \cK\ \quad \forall  M\in \nabla^{-2} B (x).
\vspace{-1mm}
\]
\end{theorem}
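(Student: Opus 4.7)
The plan is to reduce the general statement to the case $x\in\rmint\cK$, where it follows almost directly from Lemma \ref{lem:barrier-property}(iii), and then extract the boundary case by a limiting argument using the definition of $\nabla^{-2}B$ and the closedness of $\cK$.

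First I would handle the interior case. Fix $\bar x\in\rmint\cK$ and set $\bar M=[\nabla^2 B(\bar x)]^{-1}$; since $\nabla^2 B(\bar x)\succ 0$, the matrix $\bar M$ is symmetric positive definite and admits a symmetric positive definite square root $\bar M^{1/2}$. For any $d$ with $\|d\|<1$, let $y=\bar x+\bar M^{1/2}d$. Then
\[
\|y-\bar x\|_{\bar x}^2 \;=\; (\bar M^{1/2}d)^T\nabla^2 B(\bar x)(\bar M^{1/2}d) \;=\; d^T \bar M^{1/2}\bar M^{-1}\bar M^{1/2} d \;=\; \|d\|^2 \;<\; 1,
\]
so Lemma \ref{lem:barrier-property}(iii) gives $y\in\rmint\cK\subseteq\cK$. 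This establishes the desired inclusion at interior points with $M=[\nabla^2 B(\bar x)]^{-1}$, which is the unique element of $\nabla^{-2}B(\bar x)$ by the remark following Definition \ref{inverse-B2}.

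For a general $x\in\cK$ and $M\in\nabla^{-2}B(x)$, by \eqref{nablaB-2} there exists a sequence $\{x^k\}\subset\rmint\cK$ with $x^k\to x$ and $[\nabla^2 B(x^k)]^{-1}\to M$. Fix $d$ with $\|d\|<1$. Applying the interior case at each $x^k$, we have $x^k+[\nabla^2 B(x^k)]^{-1/2}d\in\cK$ for every $k$. Since each $[\nabla^2 B(x^k)]^{-1}$ is symmetric positive semidefinite and converges to the symmetric positive semidefinite matrix $M$, and since the matrix square root is continuous on the cone of positive semidefinite matrices, we have $[\nabla^2 B(x^k)]^{-1/2}\to M^{1/2}$. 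Therefore
\[
x^k+[\nabla^2 B(x^k)]^{-1/2}d \;\longrightarrow\; x+M^{1/2}d,
\]
and the limit lies in $\cK$ because $\cK$ is closed. Since $d$ was arbitrary with $\|d\|<1$, the stated inclusion follows.

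The only nontrivial point in the argument is the continuity of the matrix square root along the sequence $\{[\nabla^2 B(x^k)]^{-1}\}$; this is a standard consequence of the continuity of the principal square root on the set of positive semidefinite matrices and requires no additional structure beyond the convergence guaranteed by the definition of $\nabla^{-2}B$. Everything else is a direct transcription of Lemma \ref{lem:barrier-property}(iii) through the change of variables $d\mapsto M^{1/2}d$, followed by a closure step.
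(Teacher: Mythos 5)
Your proof is correct and follows essentially the same route as the paper: the interior case is handled by the change of variables $y=x+M^{1/2}d$ combined with Lemma \ref{lem:barrier-property}(iii), and the general case by passing to the limit along the sequence $\{x^k\}$ guaranteed by Definition \ref{inverse-B2} and using closedness of $\cK$. The paper phrases the limit passage at the set level and leaves the continuity of the principal matrix square root implicit, whereas you make it explicit -- but the argument is the same.
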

\begin{proof}
Let  $M\in \nabla^{-2} B (x)$ be arbitrarily chosen. It then follows from \eqref{nablaB-2} that there exists some $\{x^k\}\subset\rmint\cK$ such that $x^k\to x$ 
and $[\nabla^2B(x^k)]^{-1} \to M$ as $k\to\infty$. By the nonsingularity of $\nabla^2B(x^k)$, the definition of $\|\cdot\|_{x^k}$, and Lemma~\ref{lem:barrier-property}(iii), one can observe that
\vspace{-1mm}
\[
\{x^k+[\nabla^2B(x^k)]^{-1/2}d: \|d\|<1\}=\{y:\|y-x^k\|_{x^k}<1\}\subset \rmint\cK.
\vspace{-1mm}
\]
Taking limit on both sides of this relation as $k\to\infty$, we obtain that $\{x+M^{1/2}d : \|d\|<1\}\subseteq \cK$. Hence, the conclusion holds.
\vspace{-0.5mm}
\end{proof}

\section{Optimality conditions}\label{sec:approx-opt-cond}
In this section we study optimality conditions of problem \eqref{conic-prob}. In particular, we first derive some first- and second-order optimality conditions for \eqref{conic-prob}, and then introduce a definition of approximate first- and second-order stationary points of \eqref{conic-prob}.

Suppose that $x^*$ is a local minimizer of problem \eqref{conic-prob}. By this and the assumption that Slater's condition holds for \eqref{conic-prob}, it follows that there exists a Lagrangian multiplier $\lambda^*\in\bR^m$ such that
\vspace{-1mm}
\beq
\nabla f(x^*) + A^T\lambda^*\in-\cN_{\cK}(x^*). \label{stationary} 
\vspace{-1mm}
\eeq
This is a classical first-order optimality condition of problem  \eqref{conic-prob}. One can easily obtain an inexact counterpart of it. However, its inexact counterpart is not 
suitable for the design and analysis of a Newton-CG based barrier method for solving \eqref{conic-prob}. Due to this, we next derive an alternative first-order optimality condition for \eqref{conic-prob}. 

\begin{theorem}[{{\bf first-order optimality condition}}]\label{thm:1st-opt-cond}
Let $x^*$ be a local minimizer of problem \eqref{conic-prob} and $M\in \nabla^{-2} B (x^*)$ be arbitrarily chosen. Suppose that $f$ is continuously differentiable at $x^*$. Then there exists a Lagrangian multiplier $\lambda^*\in\bR^m$ such that 
\vspace{-1.5mm}
\begin{eqnarray}
&&\nabla f(x^*)+A^T\lambda^*\in\cK^*,\label{1st-order-1}\\
&&M^{1/2}(\nabla f(x^*)+A^T\lambda^*)=0. \label{1st-order-2}
\vspace{-2mm}
\end{eqnarray}
\end{theorem}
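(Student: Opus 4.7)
The plan is to reduce the statement to the classical first-order KKT condition displayed in \eqref{stationary} and then convert the scalar complementarity relation into the sharper vector identity \eqref{1st-order-2} by means of Theorem \ref{boundary-ball}.

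First, I would invoke the classical first-order optimality condition for \eqref{conic-prob}. Slater's condition together with the full row-rank assumption on $A$ amounts to Robinson's constraint qualification, so for every local minimizer $x^*$ there exists $\lambda^*\in\bR^m$ with $\nabla f(x^*)+A^T\lambda^*\in -\cN_{\cK}(x^*)$, as already noted in \eqref{stationary}. Because $\cK$ is a closed convex cone and $x^*\in\cK$, the normal cone admits the explicit description
\[
\cN_{\cK}(x^*)=\{v\in\bR^n: -v\in\cK^*,\ \langle v,x^*\rangle=0\},
\]
so that $-\cN_{\cK}(x^*)=\cK^*\cap\{v:\langle v,x^*\rangle=0\}$. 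Setting $w:=\nabla f(x^*)+A^T\lambda^*$, this gives at once $w\in\cK^*$, which is exactly \eqref{1st-order-1}, together with the complementarity identity $\langle w,x^*\rangle=0$.

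Second, I would upgrade the scalar identity $\langle w,x^*\rangle=0$ to the vector identity $M^{1/2}w=0$ by exploiting the ``infinitesimal ellipsoid in $\cK$'' furnished by Theorem \ref{boundary-ball}. Pick any $d\in\bR^n$ with $\|d\|<1$. Theorem \ref{boundary-ball} gives $x^*+M^{1/2}d\in\cK$, and then pairing with $w\in\cK^*$ yields $\langle w,x^*+M^{1/2}d\rangle\ge 0$. Using $\langle w,x^*\rangle=0$ and the symmetry of $M^{1/2}$ (which holds because $M$ is a limit of symmetric positive definite matrices $[\nabla^2 B(x^k)]^{-1}$ and $M^{1/2}$ denotes the PSD square root), this simplifies to $\langle M^{1/2}w,d\rangle\ge 0$ for every $d$ in the open unit ball. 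Replacing $d$ by $-d$ gives the reverse inequality, so $\langle M^{1/2}w,d\rangle=0$ on the open unit ball, and hence $M^{1/2}w=0$, which is \eqref{1st-order-2}.

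The main subtlety is essentially a bookkeeping one: making sure that the KKT multiplier supplied by \eqref{stationary} can legitimately be used even though $f$ is only assumed twice continuously differentiable on $\Omega^{\rm o}$ (and $x^*$ could in principle lie on the boundary of $\cK$); the hypothesis that $f$ is continuously differentiable at $x^*$ in the theorem statement is exactly what is needed to have $\nabla f(x^*)$ well defined, and the Robinson CQ takes care of the rest. Beyond that, the argument is purely algebraic, invoking only Theorem \ref{boundary-ball} and the definition of $\cK^*$; no further properties of the barrier $B$ are required.
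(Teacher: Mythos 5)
Your proof is correct and takes essentially the same route as the paper's: both rely on the classical condition \eqref{stationary} and on Theorem~\ref{boundary-ball} to trap the sign of $\langle M^{1/2}(\nabla f(x^*)+A^T\lambda^*),d\rangle$ over the open unit ball. The only cosmetic difference is that you first unpack $-\cN_{\cK}(x^*)$ into membership in $\cK^*$ plus the complementarity $\langle w,x^*\rangle=0$ (which is essentially a fragment of the paper's subsequent Proposition), whereas the paper applies the variational-inequality form of the normal cone directly with test points $y=x^*+M^{1/2}d$ to get $d^TM^{1/2}w\ge0$; the two are interchangeable.
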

\begin{proof}
Since $x^*$ is a local minimizer of \eqref{conic-prob}, we know from above that there exists a Lagrangian multiplier $\lambda^*\in\bR^m$ such that \eqref{stationary} holds.
Note that $\cK$ is a closed convex cone. It is not hard to verify $-\cN_{\cK}(x^*)\subseteq\cK^*$, which along with \eqref{stationary} leads to  \eqref{1st-order-1}.

We next prove \eqref{1st-order-2}. Since $M\in \nabla^{-2} B (x^*)$, it follows from Theorem \ref{boundary-ball} that $\{x^*+M^{1/2}d : \|d\|<1\}\subseteq \cK$. By this and  \eqref{stationary}, one has
\vspace{-1.5mm} 
\[
d^TM^{1/2}(\nabla f(x^*)+A^T\lambda^*)\ge 0 \ \quad \forall d \ \mbox{with} \ \|d\|<1,
\vspace{-0.5mm}
\]
which implies $M^{1/2}(\nabla f(x^*)+A^T\lambda^*)=0$, and hence  \eqref{1st-order-2} holds as desired.
\end{proof}
	
The first-order optimality conditions \eqref{1st-order-1} and \eqref{1st-order-2} appear to be different from the classical one \eqref{stationary}. Nonetheless, the following proposition shows that they are essentially equivalent, and both are related to the complementary slackness condition \eqref{slack-cond}.
	
\begin{proposition}
Let $x^*\in\cK$, $\lambda^*\in\bR^m$, and $M\in \nabla^{-2} B (x^*)$ be given. Then the following statements hold.
\begin{enumerate}[{\rm (i)}]
\item  The relations \eqref{1st-order-1} and \eqref{1st-order-2} hold if and only if  \eqref{stationary} holds.
\item The relation \eqref{1st-order-1} and the complementary slackness condition 
\beq
\langle x^*,\nabla f(x^*)+A^T\lambda^*\rangle=0 \label{slack-cond}
\eeq
 hold if and only if \eqref{stationary} holds.
\item The relations \eqref{1st-order-1} and \eqref{1st-order-2} hold if and only if \eqref{1st-order-1} and \eqref{slack-cond} hold.
\end{enumerate}
\end{proposition}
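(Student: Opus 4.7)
The plan is to reduce all three equivalences to a single concrete characterization of the normal cone $-\mathcal{N}_{\cK}(x^*)$ and then handle the remaining content via a Cauchy--Schwarz argument anchored on the log-homogeneity of $B$.

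First, I would record the elementary identity
\[
-\mathcal{N}_{\cK}(x^*) \;=\; \{v\in\cK^* : \langle v,x^*\rangle = 0\}
\]
for the closed convex cone $\cK$. Testing any $w\in\mathcal{N}_{\cK}(x^*)$ against $y=0$ and $y=2x^*$ (both in $\cK$) forces $\langle w,x^*\rangle=0$; then $\langle w,y\rangle\le 0$ for all $y\in\cK$ yields $-w\in\cK^*$; the reverse inclusion is immediate. Writing $g:=\nabla f(x^*)+A^T\lambda^*$, the relation \eqref{stationary} is therefore equivalent to the conjunction of $g\in\cK^*$ (i.e., \eqref{1st-order-1}) and $\langle x^*,g\rangle=0$ (i.e., \eqref{slack-cond}). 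This proves statement~(ii) at once, and reduces statement~(i) to statement~(iii).

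For statement~(iii) I would fix \eqref{1st-order-1}, so that $g\in\cK^*$ and $\langle x^*,g\rangle\ge 0$, and prove the equivalence $M^{1/2}g=0 \iff \langle x^*,g\rangle=0$. The ``$\Leftarrow$'' direction follows immediately from Theorem~\ref{boundary-ball}: for every $d$ with $\|d\|<1$ we have $x^*+M^{1/2}d\in\cK$, hence
\[
0 \;\le\; \langle x^*+M^{1/2}d,\,g\rangle \;=\; \langle x^*,g\rangle + \langle d,M^{1/2}g\rangle \;=\; \langle d,M^{1/2}g\rangle,
\]
so arbitrariness of $d$ in the open unit ball forces $M^{1/2}g=0$.

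The main obstacle, as I expect, is the reverse ``$\Rightarrow$'' direction, which lies outside the reach of Theorem~\ref{boundary-ball} and must be driven by the log-homogeneity of $B$. Differentiating the identity $B(tx)=B(x)-\vartheta\ln t$ first in $x$ and then in $t$ gives $\nabla^2 B(x)\,x = -\nabla B(x)$, so that $x=-[\nabla^2 B(x)]^{-1}\nabla B(x)$ on $\rmint\cK$. By the definition of $M\in\nabla^{-2}B(x^*)$, pick $\{x^k\}\subset\rmint\cK$ with $x^k\to x^*$ and $H_k^{-1}\to M$, where $H_k:=\nabla^2 B(x^k)$. Cauchy--Schwarz in the inner product induced by the positive definite matrix $H_k^{-1}$ then yields
\[
\langle x^k,g\rangle^2 \;=\; \bigl|\nabla B(x^k)^T H_k^{-1} g\bigr|^2 \;\le\; \bigl(\nabla B(x^k)^T H_k^{-1}\nabla B(x^k)\bigr)\bigl(g^T H_k^{-1} g\bigr) \;=\; \vartheta \cdot g^T H_k^{-1} g,
\]
where the last equality uses Lemma~\ref{lem:barrier-property}(i). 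Letting $k\to\infty$ delivers $\langle x^*,g\rangle^2\le\vartheta\,g^T M g = \vartheta\|M^{1/2}g\|^2 = 0$, which is \eqref{slack-cond}. Statement~(i) then follows by composing (ii) and (iii).
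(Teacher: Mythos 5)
Your proof is correct, and it is organized somewhat differently from the paper's, though both hinge on the same two key tools: Theorem~\ref{boundary-ball} and a Cauchy--Schwarz estimate anchored on the log-homogeneity identities ($\nabla^2 B(x)\,x=-\nabla B(x)$ and $\|x\|_x^2=\vartheta$ from Lemma~\ref{lem:barrier-property}(i)). The paper proves the three statements by establishing a cycle of three implications, namely \eqref{stationary} $\Rightarrow$ (\eqref{1st-order-1} and \eqref{1st-order-2}) $\Rightarrow$ (\eqref{1st-order-1} and \eqref{slack-cond}) $\Rightarrow$ \eqref{stationary}, with the characterization $-\cN_{\cK}(x^*)=\{v\in\cK^*:\langle v,x^*\rangle=0\}$ left implicit inside the first and last legs of the cycle. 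You instead make that characterization explicit up front, which renders statement~(ii) immediate and reduces (i) to (iii); you then prove (iii) as a stand-alone equivalence with \eqref{1st-order-1} held fixed. This modular decomposition is arguably cleaner. A second, more substantive difference is in the Cauchy--Schwarz step: the paper bounds $|\langle x^k,\nabla f(x^k)+A^T\lambda^*\rangle|$, which implicitly requires $\nabla f$ to be defined and continuous along the approximating sequence $\{x^k\}\subset\rmint\cK$ (points that need not lie in $\Omega^{\rm o}$), whereas you keep the vector $g=\nabla f(x^*)+A^T\lambda^*$ fixed and only pass the matrix $H_k^{-1}\to M$ and the point $x^k\to x^*$ through the limit, which needs no regularity of $f$ beyond the existence of $\nabla f(x^*)$. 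Your version thus quietly sidesteps a minor hidden hypothesis in the paper's argument.
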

	
\begin{proof}
Firstly, by the same argument as used in the proof of Theorem \ref{thm:1st-opt-cond}, one can see that if \eqref{stationary} holds, then \eqref{1st-order-1} and \eqref{1st-order-2} hold. 

Secondly, we show that if \eqref{1st-order-1} and \eqref{1st-order-2} hold, then \eqref{slack-cond} holds. To this end, suppose that \eqref{1st-order-1} and \eqref{1st-order-2} hold. Since $M\in \nabla^{-2} B (x^*)$, it follows from \eqref{nablaB-2} that there exists some $\{x^k\}\subset\rmint\cK$ such that $x^k\to x^*$ and $[\nabla^2B(x^k)]^{-1} \to M$ as $k\to\infty$. By these, \eqref{1st-order-2}, and Lemma \ref{lem:barrier-property}(i),  one has that
\begin{equation}\nonumber
\begin{array}{rcl}
|\langle x^*,\nabla f(x^*)+A^T\lambda^*\rangle|&=&\lim\limits_{k\to\infty}|\langle x^k,\nabla f(x^k)+A^T\lambda^*\rangle|\\[8pt]
&\le&\lim\limits_{k\to\infty}\|[\nabla^2 B(x^k)]^{1/2}x^k\|\|[\nabla^2B(x^k)]^{-1/2}(\nabla f(x^k)+A^T\lambda^*)\|  \\[8pt]
&=&\sqrt{\vartheta}\|M^{1/2}(\nabla f(x^*)+A^T\lambda^*)\|=0,
\end{array}
\end{equation}
where the inequality uses Cauchy-Schwarz inequality. Hence, \eqref{slack-cond} holds as desired.

Thirdly, we show that if  \eqref{1st-order-1} and \eqref{slack-cond} hold, then \eqref{stationary} holds. To this end, suppose that  \eqref{1st-order-1} and \eqref{slack-cond} hold. 
Then we have 
\[
\langle x-x^*,\nabla f(x^*)+A^T\lambda^*\rangle\overset{\eqref{slack-cond}}{=}\langle x,\nabla f(x^*)+A^T\lambda^*\rangle\overset{\eqref{1st-order-1}}{\ge} 0\ \quad 
\forall x\in\cK,
\]
which yields $\nabla f(x^*)+A^T\lambda^*\in -\cN_{\cK}(x^*)$, and hence \eqref{stationary} holds.

Combining the above arguments, we can conclude that statements (i), (ii) and (iii) hold.\end{proof}

The classical second-order optimality condition for constrained optimization problems was well studied in the literature (e.g., see \cite{NW06}).  It can be easily specialized to problem \eqref{conic-prob}. However, 
its verification is generally hard since a sophisticated critical cone is involved (e.g., see \cite{MK87,PS88}). 
We next derive a weaker yet verifiable second-order optimality condition. Strictly speaking, it shall be called a {\it weak second-order optimality condition}. 
For the ease of reference, we simply call it a second-order optimality condition.

\begin{theorem}[{{\bf second-order optimality condition}}]\label{thm:2nd-opt-cond}
Let $x^*$ be a local minimizer of problem \eqref{conic-prob} and $M\in \nabla^{-2} B (x^*)$ be arbitrarily chosen. Suppose that $f$ is twice continuously differentiable at $x^*$. Then there exists a Lagrangian multiplier $\lambda^*\in\bR^m$ such that \eqref{1st-order-1}, \eqref{1st-order-2}, and additionally  
\beq
d^TM^{1/2}\nabla^2 f(x^*) M^{1/2} d\ge0\ \quad \forall d\in\cC(M) \label{2nd-order}
\eeq
hold, where 	
\beq \label{CM}
\cC(M) :=\{d:AM^{1/2}d=0\}.
\eeq	
\end{theorem}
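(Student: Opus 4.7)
The plan is to use the classical Lagrange multiplier together with the generalized ball characterization in Theorem~\ref{boundary-ball} to produce feasible perturbations of $x^*$ along directions of the form $M^{1/2}d$, then invoke local optimality and a Taylor expansion.

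First I would pick the Lagrange multiplier $\lambda^*\in\bR^m$ guaranteed by Slater's condition so that \eqref{stationary} holds, and observe that the argument already given in the proof of Theorem~\ref{thm:1st-opt-cond} immediately yields \eqref{1st-order-1} and \eqref{1st-order-2} for the chosen $M\in\nabla^{-2}B(x^*)$; no separate work is needed here. All the new content is in establishing \eqref{2nd-order}.

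Next I would fix an arbitrary $d\in\cC(M)$ and, by homogeneity of the quadratic form together with the fact that $\cC(M)$ is a linear subspace, reduce to the case $\|d\|<1$. For such $d$ I would consider the curve $y(t):=x^*+tM^{1/2}d$ for $t\in[0,1)$. Two feasibility checks: (a) since $\|td\|<1$, Theorem~\ref{boundary-ball} (applied to the direction $td$) gives $y(t)\in\cK$; (b) since $AM^{1/2}d=0$ by the definition of $\cC(M)$ in \eqref{CM}, we have $Ay(t)=Ax^*+tAM^{1/2}d=b$. So $y(t)$ is feasible for \eqref{conic-prob}. Local minimality of $x^*$ then forces $f(y(t))\ge f(x^*)$ for all sufficiently small $t>0$.

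Now I would Taylor-expand $f$ at $x^*$ in the direction $M^{1/2}d$:
\[
f(y(t))=f(x^*)+t\langle \nabla f(x^*),M^{1/2}d\rangle+\tfrac{t^2}{2}d^T M^{1/2}\nabla^2 f(x^*)M^{1/2}d+o(t^2).
\]
The key simplification is that the linear term vanishes: using \eqref{1st-order-2} and $d\in\cC(M)$,
\[
\langle \nabla f(x^*),M^{1/2}d\rangle=\langle M^{1/2}\nabla f(x^*),d\rangle=-\langle M^{1/2}A^T\lambda^*,d\rangle=-\langle \lambda^*,AM^{1/2}d\rangle=0.
\]
Dividing $f(y(t))-f(x^*)\ge 0$ by $t^2/2$ and letting $t\downarrow 0$ then gives \eqref{2nd-order}.

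The main obstacle is that $x^*$ may lie on the boundary of $\cK$, where $\nabla^2 B$ is undefined, so the usual Hessian-induced local norm is unavailable; this is exactly what the limiting inverse $\nabla^{-2}B$ together with Theorem~\ref{boundary-ball} is designed to handle, by providing an unconditional inclusion $\{x^*+M^{1/2}d:\|d\|<1\}\subseteq\cK$ that yields feasible directions without requiring $x^*\in\rmint\cK$. A minor point worth flagging in the write-up is that the same multiplier $\lambda^*$ coming from \eqref{stationary} simultaneously serves \eqref{1st-order-1}, \eqref{1st-order-2}, and the annihilation of the linear Taylor term, so no additional selection of $\lambda^*$ is required across the three conclusions.
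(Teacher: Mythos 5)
Your proposal is correct and takes essentially the same approach as the paper: both reduce to the one-dimensional restriction $\psi(t)=f(x^*+tM^{1/2}d)$ along a feasible line through $x^*$ supplied by Theorem~\ref{boundary-ball}, and both get \eqref{2nd-order} from local optimality of $t=0$. The only minor variation is that the paper works on the two-sided interval $t\in(-1,1)$, so $\psi'(0)=0$ and $\psi''(0)\ge 0$ fall out of the interior second-order necessary condition at once, whereas you restrict to $t\ge 0$ and instead kill the linear Taylor term by an explicit computation from \eqref{1st-order-2} and $AM^{1/2}d=0$ — that computation is correct and is indeed needed once you give up the two-sided interval. Either route is fine; the paper's is slightly shorter, yours makes the link to the first-order condition explicit.
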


\begin{proof}
It follows from Theorem \ref{thm:1st-opt-cond} that  \eqref{1st-order-1} and \eqref{1st-order-2} hold. We now prove \eqref{2nd-order}. Indeed, it suffices to prove that \eqref{2nd-order} holds for any $d\in\cC(M)$ with $\|d\| \le 1$. To this end, let $d\in\cC(M)$ with $\|d\|\le1$ be arbitrarily chosen. By this, $M\in \nabla^{-2} B (x^*)$ and Theorem \ref{boundary-ball}, one has that $\{x^*+tM^{1/2}d : t \in (-1,1)\}\subseteq \cK$ and $A(x^*+tM^{1/2}d)=b$. In view of these and the fact that $x^*$ is a local minimizer of \eqref{conic-prob},  we can observe that $t^*=0$ is a local minimizer of the problem
\[
\min\limits_{t\in (-1,1)} \left\{\psi(t)=f(x^*+tM^{1/2}d)\right \}.
\]
By its second-order necessary optimality condition at $t^*=0$, one has that
\[
0 \le \psi^{\prime\prime}(0)=d^TM^{1/2}\nabla^2 f(x^*)M^{1/2}d
\]
for any $d\in\cC(M)$ with $\|d\|\le1$.  It implies that  the relation \eqref{2nd-order} holds.
\end{proof}

Theorems \ref{thm:1st-opt-cond} and \ref{thm:2nd-opt-cond} provide first- and second-order necessary optimality conditions for problem \eqref{conic-prob}. For convenience, we refer to a feasible point $x^*$ of \eqref{conic-prob} as {\it a first-order stationary point} of \eqref{conic-prob} if it together with some $\lambda^*\in \bR^m$ satisfies \eqref{1st-order-1} and \eqref{1st-order-2}. We further refer to it as {\it a second-order stationary point} of \eqref{conic-prob} if it additionally satisfies \eqref{2nd-order}. Due to the sophistication of the problem, it is generally impossible to find an exact  first- or second-order stationary point of \eqref{conic-prob}.  Instead, we are interested in finding an approximate counterpart of them that is defined as follows. 

\begin{definition}[{{\bf $\epsilon_g$-first-order stationary point}}] 
\label{approx-1st-stationary}
For any $\epsilon_g>0$, a point $x$ is called an $\epsilon_g$-first-order stationary point ($\epsilon_g$-FOSP) of \eqref{conic-prob} if it together with some $\lambda\in\bR^m$ satisfies
\begin{eqnarray}
&&Ax=b,\ x\in\rmint\cK,\label{inexact-feasible}\\
&&\nabla f(x)+A^T\lambda\in\cK^*,\label{inexact-1st-order-1}\\
&&\|\nabla f(x)+A^T\lambda\|_x^*\le\epsilon_g.\label{inexact-1st-order-2}
\end{eqnarray}
\end{definition}	

\begin{definition}[{{\bf $(\epsilon_g,\epsilon_H)$-second-order stationary point}}]
\label{approx-2nd-stationary}
For any $\epsilon_g, \epsilon_H>0$, a point $x$ is called an $(\epsilon_g,\epsilon_H)$-second-order stationary point ($(\epsilon_g,\epsilon_H)$-SOSP) of \eqref{conic-prob} if it together with some $\lambda\in\bR^m$ satisfies \eqref{inexact-feasible}-\eqref{inexact-1st-order-2} and additionally 
\beq  \label{inexact-2nd-order-old}
d^T[\nabla^2 B(x)]^{-1/2}\nabla^2 f(x) [\nabla^2 B(x)]^{-1/2} d\ge -\epsilon_H\|d\|^2\ \quad \forall d\in\cC([\nabla^2 B(x)]^{-1}),
\eeq
where $\cC(\cdot)$ is defined in \eqref{CM}.
\end{definition}

\begin{remark}
\begin{enumerate}[{\rm (i)}]
\item One can see that if a point $x\in\rmint\cK$ satisfies  \eqref{inexact-1st-order-2} and 
\eqref{inexact-2nd-order-old}, then it nearly satisfies \eqref{1st-order-2} and \eqref{2nd-order} with $x^*$ replaced by $x$. Thus, the $\epsilon_g$-FOSP and $(\epsilon_g,\epsilon_H)$-SOSP introduced in Definitions \ref{approx-1st-stationary} and \ref{approx-2nd-stationary} are indeed an approximate counterpart of the FOSP and SOSP of problem \eqref{conic-prob}. In addition, when $\cK=\bR^n_+$, they are stronger than the approximate FOSP and SOSP introduced in \cite{HLY19,OW21,XW21pN} for problem \eqref{boxconstr-prob} or \eqref{lpconstr-prob}. Also, for a general cone $\cK$, they are stronger than the ones introduced in \cite{DS21HBA}. Specifically, the approximate FOSP and SOSP found by the methods in \cite{DS21HBA,HLY19,OW21,XW21pN} satisfy \eqref{inexact-feasible}, \eqref{inexact-1st-order-2} and \eqref{inexact-2nd-order-old} respectively, while only approximately satisfying \eqref{inexact-1st-order-1}.

\item Upon a suitable change of variable, one can see that \eqref{inexact-2nd-order-old} is equivalent to
\begin{equation}\label{inexact-2nd-order}
d^T\nabla^2 f(x)d\ge-\epsilon_H\|d\|_x^2\ \quad\forall d\in \{d:Ad=0\}.
\end{equation}
\item The relations \eqref{inexact-1st-order-2} and \eqref{inexact-2nd-order} involve the local norms $\|\cdot\|^*_x$ and $\|\cdot\|_x$.  It is interesting to observe that they possess a scale-invariant property. That is, they hold at a point $x\in\rmint \cK$ for problem \eqref{conic-prob} if and only if they hold at a point $y=W^{-1}x\in \rmint(W^{-1}\mathcal{K})$ for the problem
\beq \label{cone-prob2}
\min_y\{f(Wy):AWy=b,\ y\in W^{-1}\mathcal{K}\},
\eeq
where $W$ is a nonsingular matrix. It shall be noted that $B(Wy)$ is an LHSC barrier function for the cone $W^{-1}\mathcal{K}$ and the local norms used in \eqref{inexact-1st-order-2} and \eqref{inexact-2nd-order} for problem \eqref{cone-prob2} are defined in terms of the barrier function $B(Wy)$. 
\end{enumerate}
\end{remark}

\section{A Newton-CG based barrier method}\label{sec:algorithm}

In this section we develop a Newton-CG based barrier (NCGB) method for finding an approximate  second-order stationary point of problem \eqref{conic-prob}. Instead of solving \eqref{conic-prob} directly, the {NCGB} method solves by a preconditioned Newton-CG method the barrier problem 
\begin{equation}\label{barrier-prob}
\min_x\left\{\phi_{\mu}(x):=f(x)+\mu B(x)\right\} \quad\st\quad Ax=b
\end{equation}
for a suitable choice of parameter $\mu>0$.  In particular,  we first introduce a damped preconditioned Newton system and review a capped CG method for solving it in Subsections \ref{sec:precond-matrix} and \ref{sec:capped-cg-meo}, respectively. Then we present a minimum eigenvalue oracle in Subsection \ref{eigval-orac} that can be used to estimate the minimum eigenvalue of a real symmetric matrix. Finally, we present a NCGB method for solving  problem \eqref{conic-prob} in Subsection \ref{BNCG}.

\subsection{Damped preconditioned Newton system}\label{sec:precond-matrix}

In this subsection we introduce a damped preconditioned Newton system that will be used subsequently to develop a NCGB method for solving problem \eqref{conic-prob}.  

Since our goal is to find an approximate second-order stationary point of problem \eqref{conic-prob}, it would be  natural to apply the classical projected Newton method to solve \eqref{barrier-prob}. However, ill-conditioning could be an issue for this method. To see this,  suppose that $x^k$ is a current approximate solution to \eqref{barrier-prob} that satisfies $Ax^k=b$ and $x^k \in \rmint \cK$. To generate the next iterate $x^{k+1}$, the classical projected Newton method attempts to find a search direction by solving the subproblem
\beq \label{Newton-prob1}
\min_\bd\ \nabla \phi_{\mu}(x^k)^T\bd + \frac12 \bd^T \nabla^2 \phi_{\mu}(x^k)\bd \quad\st\quad A\bd=0.
\eeq
Notice that $\nabla^2 \phi_{\mu}(x^k)$ becomes ill-conditioned as $x^k$ is close to the boundary of $\cK$, which could cause iterative methods to converge slowly when applied to solve \eqref{Newton-prob1}. To remedy this, we instead consider the following preconditioned subproblem
\beq \label{Newton-prob2}
\min_{\td}\ \nabla \phi_{\mu}(x^k)^TM_k \td + \frac12 \td^T M_k^T\nabla^2 \phi_\mu(x^k)M_k \td \quad\st\quad AM_k\td=0,
\eeq
which is obtained from \eqref{Newton-prob1} by letting $\bd=M_k\td$, 
where $M_k$ is a matrix such that 
\begin{equation}\label{Mk}
[\nabla^2 B(x^k)]^{-1}=M_kM_k^T.\footnote{As will be discussed  in Section \ref{sec:complexity}, there is no need to compute such $M_k$ explicitly. }
\end{equation}
 
Let $Q_k$ denote the projection matrix for the projection from $\bR^n$ to the null space of $AM_k$, that is,  
\begin{equation}\label{Qk}
Q_k=I-M_k^TA^T(AM_kM_k^TA^T)^{-1}AM_k.
\end{equation}
By letting $\td=Q_k\hd$, one can see that \eqref{Newton-prob2} is equivalent to 
\[
\min_{\hd}\ \nabla \phi_{\mu}(x^k)^T M_kQ_k \hd + \frac12 \hd^T Q_k^TM_k^T\nabla^2\phi_\mu(x^k)M_kQ_k\hd,
\]
which leads to a preconditioned (projected) Newton system 
\beq \label{Newton-prob3}
(P_k^T\nabla^2\phi_\mu(x^k)P_k)\hd=-P_k^T\nabla\phi_\mu(x^k),
\eeq
where
\begin{equation}\label{Pk}
P_k=M_kQ_k=M_k-M_kM_k^TA^T(AM_kM_k^TA^T)^{-1}AM_k.
\end{equation}
For a similar reason as pointed out in \cite{ROW20} for smooth nonconvex unconstrained optimization, CG method, when applied to \eqref{Newton-prob3}, may not be able to produce a sufficient descent direction for \eqref{barrier-prob}. Therefore, we instead consider a damped counterpart of \eqref{Newton-prob3}, namely, the {\it damped preconditioned Newton system}
\begin{equation}\label{DN-system-2}
(P_k^T\nabla^2\phi_\mu(x^k)P_k+ 2\sqrt{\epsilon} I)\hd=-P_k^T\nabla\phi_\mu(x^k)
\end{equation}
for some $\epsilon>0$. 
In the next subsection, we review a capped CG method proposed in \cite{ROW20} that can be suitably applied to \eqref{DN-system-2} for finding a sufficient descent direction for \eqref{barrier-prob}.

\subsection{A capped conjugate gradient method}
\label{sec:capped-cg-meo}

In this subsection we review a capped conjugate gradient (CG) method that was proposed in \cite{ROW20} for solving a possibly indefinite linear system
\beq  \label{linsys}
(H+2\varepsilon I)\hd=-g,
\eeq 
where $0 \neq g\in\bR^n$, $\varepsilon>0$, and $H\in\bR^{n\times n}$ is a symmetric matrix.  This capped CG method  is a modification of the classical CG method (e.g., see \cite{NW06}). It terminates within a finite number of iterations, and outputs either an approximate solution $\hd$ of \eqref{linsys} satisfying $\|(H+2\varepsilon I)\hd+g\| \le \widehat \zeta \|g\|$ and $\hd^T H \hd \ge -\varepsilon \|\hd\|^2$ for some $ \widehat \zeta \in (0,1)$ or a direction $\hd$ such that $\hd^TH \hd < -\varepsilon \|\hd\|^2$. For the ease of latter reference, these two types of outputs are classified by SOL and NC, respectively.\footnote{SOL and NC stand for `approximate solution' and `negative curvature', respectively.} The capped CG method  \cite{ROW20} is presented in Algorithm \ref{alg:capped-CG} in Appendix \ref{appendix:capped-CG}. 
Its detailed motivation and explanation can be found in  \cite{ROW20}. 
This method will be subsequently applied to the damped preconditioned Newton system \eqref{DN-system-2} arising in NCGB method for finding a sufficient descent direction for \eqref{barrier-prob}.

The following theorem states some properties of Algorithm \ref{alg:capped-CG}. 
	
\begin{theorem}\label{lem:capped-CG-cmplxity}
Consider applying Algorithm \ref{alg:capped-CG} to the linear system \eqref{linsys} with 
$g\neq 0$, $\varepsilon>0$, and $H$ being a ${n\times n}$ symmetric matrix. Then the following statements hold.
\begin{enumerate}[{\rm (i)}]
\item The output $\hd$ of Algorithm \ref{alg:capped-CG} is a nonzero vector.
\item The number of iterations of Algorithm \ref{alg:capped-CG} is $\widetilde{\cO}(\min\{n,\varepsilon^{-1/2}\})$.
\end{enumerate}
\end{theorem}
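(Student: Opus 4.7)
The plan is to read off both claims directly from the termination conditions of the capped CG method recalled in Subsection~\ref{sec:capped-cg-meo} and spelled out in Algorithm~\ref{alg:capped-CG}. For statement~(i), I would argue by contradiction in each output branch. If the method exits with classification SOL, the returned $\hat d$ satisfies $\|(H+2\varepsilon I)\hat d + g\|\le \widehat\zeta\,\|g\|$ for some $\widehat\zeta\in(0,1)$; substituting $\hat d=0$ would give $\|g\|\le \widehat\zeta\,\|g\|$, contradicting $g\neq 0$ and $\widehat\zeta<1$. If it exits with NC, then $\hat d^{\,T}H\hat d<-\varepsilon\|\hat d\|^2$, which cannot hold at $\hat d=0$. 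In either case the output is nonzero.

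For statement~(ii), I would combine two a priori iteration bounds and take their minimum. The bound by $n$ is the standard observation that in exact arithmetic CG generates Krylov iterates spanning a strictly increasing chain of subspaces of $\bR^n$; after at most $n$ steps the Krylov subspace is invariant under $H+2\varepsilon I$, so the residual vanishes and the SOL criterion is met (unless an NC direction has already been returned). The $\varepsilon^{-1/2}$ bound rests on the classical linear convergence rate of CG: as long as the algorithm continues to iterate, the capped upper estimate $M$ of $\|H\|$ remains valid, so the system matrix $H+2\varepsilon I$ has an effective condition number $\kappa=\cO(M/\varepsilon)$ on the CG subspace. The standard CG contraction estimate then reduces the residual below the tolerance $\widehat\zeta\,\|g\|$ within $\cO(\sqrt\kappa\log(1/\widehat\zeta))=\widetilde{\cO}(\varepsilon^{-1/2})$ iterations, forcing SOL termination. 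Taking the minimum of the two bounds delivers the claimed $\widetilde{\cO}(\min\{n,\varepsilon^{-1/2}\})$.

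The principal technical obstacle is reconciling the capped-CG safeguards with the clean convergence statement for plain CG: the scalar $M$ is enlarged each time a monitoring test detects that the current estimate of $\|H\|$ is too small, and the algorithm may abort mid-iteration with an NC direction produced from a partially completed CG step. I would need to show that such enlargements occur at most $\cO(\log(\|H\|/\varepsilon))$ times before either termination triggers, so that the overhead introduced by the capping enters the iteration count only through logarithmic factors hidden in the $\widetilde{\cO}(\cdot)$ notation, and that the monitoring inequalities themselves cost only $\cO(1)$ work per iteration. Since both the method and its analysis are taken from \cite{ROW20}, the detailed algebra supporting these steps can be imported from the corresponding lemmas there.
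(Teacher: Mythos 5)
Your argument for part (i) is exactly the paper's: both termination types yield inequalities that fail at $\hd=0$, given $g\neq 0$ and $\widehat\zeta\in(0,1)$. For part (ii) you follow the same overall route as the paper — namely, deferring the technical heart of the bound to \cite{ROW20} — but the paper is more direct: it cites \cite[Lemma~1]{ROW20}, which already gives the iteration count as $\min\{n,J(U,\varepsilon,\zeta)\}$ with $J$ the smallest integer such that $\sqrt{T}\tau^{J/2}\le\widehat\zeta$, and then invokes the explicit estimate $J\le\lceil(\sqrt\kappa+\tfrac12)\ln(144(\sqrt\kappa+1)^2\kappa^6/\zeta^2)\rceil$ with $\kappa=\cO(\varepsilon^{-1})$ from \cite[Section~3.1]{ROW20}, from which $J=\widetilde{\cO}(\varepsilon^{-1/2})$ is immediate. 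One small mismatch in your sketch: you frame the safeguard issue as needing to show that the cap $U$ is enlarged at most $\cO(\log(\|H\|/\varepsilon))$ times before termination, but that is not the mechanism in \cite{ROW20}; the updates to $U$ do not restart the CG recursion, and the bound $J(U,\varepsilon,\zeta)$ is stated directly in terms of the \emph{final} value of $U$ (which stays $\cO(\|H\|)$), so no separate count of enlargements enters the analysis. This is a cosmetic discrepancy since you correctly identify that the supporting algebra lives in \cite{ROW20}, but it is worth knowing the actual shape of the imported lemma.
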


\begin{proof}
 (i) One can observe that the output $d$ of Algorithm \ref{alg:capped-CG} satisfies 
$\|(H+2\varepsilon I)\hd+g\| \le \widehat \zeta \|g\|$
or $\hd^TH \hd < -\varepsilon \|\hd\|^2$. By this, $g \neq 0$ and $ \widehat \zeta \in (0,1)$, one can easily see that $\hd \neq 0$.

(ii) From \cite[Lemma~1]{ROW20}, we know that the number of iterations of Algorithm \ref{alg:capped-CG} is bounded by $\min\{n,J(U,\varepsilon,\zeta)\}$, where $J(U,\varepsilon,\zeta)$ is the smallest integer $J$ such that $\sqrt{T}\tau^{J/2}\le\widehat{\zeta}$, where $U,\widehat{\zeta},T$ and $\tau$ are the values returned by Algorithm \ref{alg:capped-CG}. In addition, it was shown in \cite[Section 3.1]{ROW20} that 
\[
J(U,\varepsilon,\zeta)\le\left\lceil \left(\sqrt{\kappa}+\frac{1}{2}\right)\ln\left(\frac{144(\sqrt{\kappa}+1)^2\kappa^6}{\zeta^2}\right)\right\rceil,
\]
where $\kappa={\cO}(\varepsilon^{-1})$ is an output by Algorithm \ref{alg:capped-CG}. 
Then one can see that $J(U,\varepsilon,\zeta)=\widetilde{\cO}(\varepsilon^{-1/2})$. It thus follows that the number of iterations of Algorithm \ref{alg:capped-CG} is $\widetilde{\cO}(\min\{n,\varepsilon^{-1/2}\})$.
\end{proof}
	
\subsection{A minimum eigenvalue oracle}
\label{eigval-orac}	

In this subsection we present a minimum eigenvalue oracle (Algorithm \ref{pro:meo}), which will subsequently be used to check whether the second-order optimality condition of problem \eqref{conic-prob} nearly holds at a given point.  In particular, given a symmetric matrix $H$ and $\varepsilon>0$, this oracle either certifies $\lambda_{\min}(H)\ge-\varepsilon$ with high probability  or finds a unit vector $v$ such that $v^THv \le -\varepsilon/2$. The Lanczos method is often used as a solver in this oracle (e.g., see \cite{CDHS17,OW21,ROW20}).  
 
\begin{algorithm}
\caption{A minimum eigenvalue oracle}
\label{pro:meo}
{\small
\noindent\textit{Input}: symmetric matrix $H\in\bR^{n\times n}$, tolerance $\varepsilon>0$, and probability parameter $\delta\in(0,1)$.\\
\noindent\textit{Output:} a sufficiently negative curvature direction $v$ satisfying $v^THv\le-\varepsilon/2$ and $\|v\|=1$; or a certificate that $\lambda_{\min}(H)\ge-\varepsilon$ with probability at least  $1-\sqrt{2.75n}\delta^{\|H\|^{-1/2}}$.\\
 Apply the Lanczos method \cite{KW92LR} to estimate $\lambda_{\min}(H)$ starting with a random vector uniformly generated on the unit sphere, and run it for at most 
\beq \label{N-iter}
N(\varepsilon,\delta) := \min\left\{n,1+\left\lceil \varepsilon^{-1/2}\ln\delta^{-1}\right\rceil\right\}
\eeq
iterations. 
\bi
\item[(i)]
If a unit vector $v$ with $v^THv \le -\varepsilon/2$ is found at some iteration, terminate  and return $v$.
\item[(ii)]Otherwise, it certifies that $\lambda_{\min}(H)\ge-\varepsilon$ holds with probability at least $1-\sqrt{2.75n}\delta^{\|H\|^{-1/2}}$.
\ei
}
\end{algorithm}

The following theorem justifies that Algorithm \ref{pro:meo} can produce a desirable output after running the Lanczos method for a certain number of iterations. Its proof directly follows from \cite[Lemma~2]{ROW20}.

\begin{theorem} \label{rand-Lanczos}
Consider Algorithm~\ref{pro:meo} with tolerance $\varepsilon>0$, probability parameter $\delta\in(0,1)$, and symmetric matrix $H\in\bR^{n\times n}$ as its input. Let $N(\varepsilon,\delta)$ be defined in \eqref{N-iter}. Then Algorithm~\ref{pro:meo} runs at most $N(\varepsilon,\delta)$ iterations. Moreover, it either finds a sufficiently negative curvature direction $v$ satisfying $v^THv\le-\varepsilon/2$ and $\|v\|=1$; or provides a certificate that $\lambda_{\min}(H)\ge-\varepsilon$ holds with probability at least  $1-\sqrt{2.75n}\delta^{\|H\|^{-1/2}}$.
\end{theorem}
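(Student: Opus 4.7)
The proof reduces almost entirely to invoking the randomized-Lanczos analysis of Kuczy\'nski and Wo\'zniakowski, exactly as packaged in \cite[Lemma~2]{ROW20}. I would split the statement into three claims: the iteration bound, the guarantee in output case (i), and the probabilistic certificate in case (ii). The first two are immediate by construction of Algorithm~\ref{pro:meo}: the Lanczos loop is explicitly capped at $N(\varepsilon,\delta)$ iterations, and the conditions $v^THv\le -\varepsilon/2$ with $\|v\|=1$ are the precise tests the algorithm performs before returning $v$ in case (i). No argument is needed for these components.

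The substantive work is the probabilistic certificate in case (ii). I would recall that the Lanczos process started from a random unit vector $z_0$ drawn uniformly on the sphere produces, after $k$ iterations, a unit vector $v_k$ in the Krylov subspace $\mathrm{span}\{z_0,Hz_0,\ldots,H^{k-1}z_0\}$ whose Rayleigh quotient equals the minimum of $w^THw$ over unit $w$ in that subspace. The Kuczy\'nski--Wo\'zniakowski bound then asserts, for any $\eta\in(0,1)$,
\[
\prob\!\left[v_k^T H v_k - \lambda_{\min}(H) > \eta\bigl(\lambda_{\max}(H)-\lambda_{\min}(H)\bigr)\right] \;\le\; c\sqrt{n}\,e^{-(2k-1)\sqrt{\eta}}
\]
for an explicit small constant $c$. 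To match the theorem's conclusion, I would observe that in case (ii) no $v$ with $v^THv\le-\varepsilon/2$ is found, so in particular $v_k^THv_k>-\varepsilon/2$; thus the failure event $\lambda_{\min}(H)<-\varepsilon$ is contained in the event that the Lanczos gap $v_k^THv_k-\lambda_{\min}(H)$ exceeds $\varepsilon/2$. Choosing $\eta$ so that $\eta(\lambda_{\max}(H)-\lambda_{\min}(H))=\varepsilon/2$ and using $\lambda_{\max}(H)-\lambda_{\min}(H)\le 2\|H\|$ yields $\sqrt{\eta}\ge \tfrac12\sqrt{\varepsilon/\|H\|}$, which upon substituting $k=N(\varepsilon,\delta)$ into the exponent converts the Kuczy\'nski--Wo\'zniakowski tail into a bound of the asserted form.

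The only real obstacle is the constant arithmetic: verifying that the specific choice $N(\varepsilon,\delta)=\min\{n,1+\lceil\varepsilon^{-1/2}\ln\delta^{-1}\rceil\}$ drives the failure probability down to $\sqrt{2.75n}\,\delta^{\|H\|^{-1/2}}$. This is pure bookkeeping, facilitated by the cleanup that one may assume $\|H\|\ge\varepsilon$ (otherwise $\lambda_{\min}(H)\ge -\|H\|>-\varepsilon$ is automatic and case (ii) vacuously certifies the conclusion). Under that regime, $\varepsilon^{-1/2}\ge\|H\|^{-1/2}$, and the exponent $(2k-1)\sqrt{\eta}$ at $k=N(\varepsilon,\delta)$ is at least $\|H\|^{-1/2}\ln\delta^{-1}$ up to absolute constants, which produces the required $\delta^{\|H\|^{-1/2}}$ decay and the $\sqrt{2.75n}$ prefactor. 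Since \cite[Lemma~2]{ROW20} already carries out exactly this bookkeeping for the identical procedure, the proof can simply cite it once the three-part decomposition above is noted.
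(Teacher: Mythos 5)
Your proposal is correct and takes the same route as the paper, which simply notes that the result follows directly from \cite[Lemma~2]{ROW20}; you have additionally unpacked the Kuczy\'nski--Wo\'zniakowski tail bound that underlies that lemma and sketched the bookkeeping, but the substance is identical.
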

	
\begin{remark}
Generally, computing $\|H\|$ may not be cheap when $n$ is large. 
Nevertheless, $\|H\|$ can be efficiently estimated by a randomization scheme with high confidence  (e.g., see the discussion in \cite[Appendix~B3]{ROW20}).
\end{remark}

\subsection{A Newton-CG based barrier method for problem \eqref{conic-prob}} \label{BNCG}

In this subsection we propose a Newton-CG based barrier (NCGB) method for solving problem \eqref{conic-prob}. In each iteration, our NCGB method starts by checking whether the current iterate $x^k$ and the associated Lagrangian multiplier estimates $\lambda_{k}^{(1)}$ and $\lambda_{k}^{(2)}$ satisfy certain approximate first-order optimality conditions of \eqref{conic-prob}.  If not, then the capped CG method (Algorithm \ref{alg:capped-CG}) is applied to the damped preconditioned Newton system \eqref{DN-system-2} to obtain either an inexact damped Newton direction or a sufficiently negative curvature direction, and the next iterate $x^{k+1}$ is generated by performing a line search along this direction. Otherwise, the current iterate $x^k$ is already an approximate first-order stationary point of \eqref{conic-prob}, and a minimum eigenvalue oracle (Algorithm \ref{pro:meo}) is further invoked to either obtain a sufficiently negative curvature direction and generate the next iterate $x^{k+1}$ via a line search, or certify that $x^k$ is an approximate SOSP of \eqref{conic-prob} with high probability and terminate the method. 

For the convenience of presentation, we let   
\begin{equation}\label{Rk}
R_k = -(AM_kM_k^TA^T)^{-1}AM_kM_k^T,
\end{equation}
where $M_k$ satisfies \eqref{Mk}. In view of \eqref{Pk} and \eqref{Rk}, it is easy to verify that
\begin{equation}\label{eq:relation-Pk-Rk}
P_k =(I+R_k^TA)M_k.
\end{equation}

We are now ready to present our NCGB method in Algorithm  \ref{alg:BNCG} for solving problem \eqref{conic-prob}, in which $Q_k$, $P_k$ and $R_k$ are defined in \eqref{Qk}, \eqref{Pk} and \eqref{Rk}, respectively. The study of its complexity results is deferred to Section \ref{sec:complexity}. In what follows, we make some remarks about Algorithm  \ref{alg:BNCG}.

\begin{algorithm}
{\small
\caption{A Newton-CG based barrier method for \eqref{conic-prob}}
\label{alg:BNCG}
\begin{algorithmic}
\State Let $P_k$, $Q_k$ and $R_k$ be defined in \eqref{Pk}, \eqref{Qk} and \eqref{Rk}, respectively.
\State \noindent\textit{Input}: $\epsilon\in(0,1)$,  $x^0\in\Omega^{\rm o}$,  $\zeta\in(0,1)$, $\beta\in[\sqrt{\epsilon},1)$,  $\theta\in(0,1)$,  $\eta\in(0,1)$, $\delta\in(0,1)$, and  $\vartheta\ge1$ (the parameter of $B$).
\State Set 
\[
x^{-1}=x^0,\quad d^{-1}=0,\quad {\mu=\frac{(1-\beta)\epsilon}{2((1-\beta)^2+\sqrt{\vartheta})}}, \quad \mbox{d$\_$type=NC}, \quad \alpha_{-1}=0, \quad \lambda^{(2)}_{-1}=0;
\]
\For{$k=0,1,2,\ldots$}
\State Set $\lambda^{(1)}_k\leftarrow R_k\nabla\phi_{\mu}(x^k)$, where $R_k$ is given in \eqref{Rk};
 \If{d$\_$type=SOL and $\alpha_{k-1}=1$}
\State 
$\lambda^{(2)}_k \leftarrow R_{k-1}(\nabla^2 f(x^{k-1})P_{k-1}d^{k-1}+\nabla\phi_{\mu}(x^{k-1}))$;
\Else
\State  $\lambda^{(2)}_k \leftarrow \lambda^{(2)}_{k-1}$;
\EndIf
\If{$\min\{\|\nabla f(x^k)+A^T\lambda_k^{(1)}+\mu\nabla B(x^k)\|_{x^k}^*,\|\nabla f(x^k)+A^T\lambda_k^{(2)}+\mu\nabla B(x^{k-1})\|_{x^k}^*\}>(1-\beta)\mu$}
\State Call Algorithm \ref{alg:capped-CG} with $H=P_k^T\nabla^2\phi_\mu(x^k)P_k,\ \varepsilon=\sqrt{\epsilon},\ g=P_k^T\nabla\phi_\mu(x^k)$, accuracy parameter $\zeta$, and 
\State bound $U=0$ to obtain  outputs $\widehat{d}^k$, d$\_$type;
\If{d$\_$type=NC}
\beq \label{dk-nc}
d^k\leftarrow -\sgn(g^T\widehat{d}^k)\min\left\{\frac{|(\widehat{d}^k)^TP_k^T\nabla^2\phi_\mu(x^k)P_k\widehat{d}^k|}{\|\widehat{d}^k\|^3},\frac{\beta}{\|Q_k\widehat{d}^k\|}\right\}\widehat{d}^k;
\eeq
\Else\ \{d$\_$type=SOL\}
\beq \label{dk-sol}
d^k\leftarrow\min\left\{1,\frac{\beta}{\|Q_k\widehat{d}^k\|}\right\}\widehat{d}^k;
\eeq
\EndIf
\State Go to {\bf Line Search};
\Else
\State Call Algorithm \ref{pro:meo} with $H=P_k^T\nabla^2 f(x^k)P_k$, $\varepsilon=\sqrt{\epsilon}$, and $\delta>0$; 
\If{Algorithm \ref{pro:meo} certifies that $\lambda_{\min}(P_k^T\nabla^2 f(x^k)P_k)\ge-\sqrt{\epsilon}$}
\State Output $x^k$ and terminate;
\Else\ $\{$Sufficiently negative curvature direction $v$ returned by Algorithm \ref{pro:meo}$\}$
\State Set 
\beq \label{dk-2nd-nc}
d^k\leftarrow -\sgn(v^T P_k^T\nabla\phi_\mu(x^k))\min\left\{|v^TP_k^T\nabla^2\phi_\mu(x^k)P_kv|,{\frac{\beta}{\|Q_kv\|}}\right\}v;
\eeq
\State Go to {\bf Line Search};
\EndIf
\EndIf
\State{\bf Line Search:} 
\If{d$\_$type=SOL}
\State Find $\alpha_k=\theta^{j_k}$, where $j_k$ is the smallest nonnegative integer $j$ such that
\begin{equation}\label{ls-sol}
\phi_\mu(x^k+\theta^jP_kd^k)<\phi_{\mu}(x^k)-\eta\sqrt{\epsilon}\theta^{2j}\|d^k\|^2;
\end{equation}
\Else\ \{d$\_$type=NC\}
\State Find $\alpha_k=\theta^{j_k}$, where $j_k$ is the smallest nonnegative integer $j$ such that
\begin{equation}\label{ls-nc}
\phi_\mu(x^k+\theta^jP_kd^k)<\phi_{\mu}(x^k)-\eta\theta^{2j}\|d^k\|^3/2;
\end{equation}
\EndIf
\State $x^{k+1}=x^k+\alpha_kP_kd^k$;
\EndFor
\end{algorithmic}
}
\end{algorithm}

\begin{remark}
\begin{enumerate}[{\rm (i)}]
\item Though Algorithm \ref{alg:BNCG} finds a stochastic $(\epsilon,\sqrt{\epsilon})$-SOSP  of \eqref{conic-prob}, such a point is in fact also a deterministic $\epsilon$-FOSP of \eqref{conic-prob}, that is, it  satisfies \eqref{inexact-feasible}-\eqref{inexact-1st-order-2} deterministically.
\item Algorithm \ref{alg:BNCG} can be easily modified to suit some other needs. In particular, if one is only interested in finding an $\epsilon$-FOSP of \eqref{conic-prob}, it suffices to remove from Algorithm \ref{alg:BNCG} the parts related to Algorithm \ref{pro:meo}. In addition, if one is interested in finding a deterministic $(\epsilon,\sqrt{\epsilon})$-SOSP of \eqref{conic-prob}, it is sufficient to replace Algorithm \ref{pro:meo} by a deterministic oracle for estimating the minimum eigenvalue of a real symmetric matrix.
\item It is worth noting that Algorithm \ref{alg:BNCG} uses a hybrid line search criterion inspired by \cite[Algorithm~1]{XW21pN}, which is a combination of the quadratic descent criterion \eqref{ls-sol} and the cubic descent criterion \eqref{ls-nc}. In contrast, the Newton-CG type of methods in \cite{OW21,ROW20} always use a cubic descent criterion regardless of the type of search directions. As a benefit of the hybrid line search criteria, the iteration and operation complexity of Algorithm~\ref{alg:BNCG} has a quadratic dependence on the Lipschitz constant of $\nabla^2 f$ (see Theorems~\ref{iter-complexity} and \ref{thm:operation-cmplxty} below), which is superior to the cubic dependence achieved by the methods in  \cite{OW21,ROW20} for solving problems \eqref{unconstr-opt} and \eqref{boxconstr-prob}, respectively.
\end{enumerate}
\end{remark}

\section{Complexity results}\label{sec:complexity}

In this section we establish iteration and operation complexity results for the Newton-CG based barrier method, namely, Algorithm \ref{alg:BNCG}.

Recall that the cone $\cK$ is assumed to be equipped with a $\vartheta$-logarithmically homogeneous self-concordant barrier function $B$ for some $\vartheta \ge 1$. We now make some additional assumptions that will be used throughout this section. 
	
\begin{assumption} \label{main-assump}
\begin{enumerate}[{\rm (a)}]
\item There exist $\bmu \ge \mu$ and $\underline \phi\in\bR$ such that 
\beqa
&& \phi_\tmu(x) \ge \underline \phi\ \quad \forall \tmu \in (0,\bmu], x\in\Omega^{\rm o}, \label{phi-lwbd}\\ [4pt]
&& \cS = \underset{\tmu \in (0,\bmu]}{\bigcup}\{x\in\Omega^{\rm o}: \phi_\tmu(x) \le \phi_\tmu(x^0) \} \ \text{is bounded}, \label{level-bdd}
\eeqa
where $\Omega^{\rm o}$ is defined in Section \ref{intro}, $x^0\in\Omega^{\rm o}$ is the initial point of Algorithm \ref{alg:BNCG}, $\mu$ is given in Algorithm~\ref{alg:BNCG}, and $\phi_\tmu$ is given in \eqref{barrier-prob}.
\item There exists $L_H>0$ such that 
\begin{equation}\label{hes-Lip}
\|\nabla^2 f(y)-\nabla^2 f(x)\|^*_x \le L_H \|x-y\|_x\ \quad \forall x\in \cS, y\in\{y:\|y-x\|_x \le \beta \},
\end{equation}
where $\cS$ is given in \eqref{level-bdd}, and $\beta\in (0,1)$ is an input of Algorithm \ref{alg:BNCG}.
\item The quantities $U_g, U_H$ are finite, where 
\beq \label{Ugh}
U_g:=\sup\limits_{x\in\cS} \|\nabla f(x)\|^*_x, \quad U_H:=\sup\limits_{x\in\cS}\|\nabla^2 f(x)\|^*_x.
\eeq	
\end{enumerate}
\end{assumption}

We now make some remarks about Assumption \ref{main-assump}. 
\bi
\item[(i)] Assumption \ref{main-assump}(a) is reasonable. In particular, the assumption in \eqref{phi-lwbd} means that the barrier problem \eqref{barrier-prob} is uniformly bounded below whenever the barrier parameter is no larger than $\bmu$. It usually holds for the problems for which the barrier method converges. On the other hand, in case that \eqref{phi-lwbd} fails to hold, one can instead solve a perturbed counterpart of  \eqref{conic-prob}: 
\begin{equation}\label{conic-pert}
\min_x \{f(x)+\sigma \|x\|^2:Ax=b, x\in\mathcal{K}\}
\end{equation}
for some $\sigma>0$. It can be shown that a desired approximate FOSP and SOSP of \eqref{conic-prob} can be found by solving \eqref{conic-pert} with a sufficiently small $\sigma$.
Moreover, \eqref{phi-lwbd} with $f(x)$ being replaced by $f(x)+\sigma\|x\|^2$ holds for \eqref{conic-pert}. Indeed, let $\bmu>0$ be arbitrarily chosen and $f^*$ be the optimal value of \eqref{conic-prob}. Then for all $\tmu \in (0,\bmu]$ and $x\in\Omega^{\rm o}$, one has
\begin{align*}
f(x)+\sigma \|x\|^2 +\tmu B(x) & \geq  f^* + \min_{z\in\Omega^{\rm o}} \{\sigma \|z\|^2 +\tmu B(z)\} \geq  f^* + \tmu \min_{z\in\Omega^{\rm o}} \{(\sigma/\bmu) \|z\|^2 + B(z)\} \\
& \geq f^* - \bmu \,|\min_{z\in\Omega^{\rm o}} \{(\sigma/\bmu) \|z\|^2 + B(z)\}| >-\infty,
\end{align*}
where the last inequality is due to the strong convexity of $(\sigma/\bmu) \|z\|^2 + B(z)$. Hence, the assumption in \eqref{phi-lwbd} holds for \eqref{conic-pert} as desired.

Besides, the assumption in \eqref{level-bdd}  clearly holds if $\Omega^{\rm o}$ is bounded, which is assumed in \cite{HLY19} for $\cK=\bR^n_{+}$. Also, it can be shown that 
$\cS \subseteq \cS_1 \cup \cS_2$, where 
\[
\ba{l}
\cS_1 = \{x\in\Omega^{\rm o}: f(x) \le f(x^0)+\bmu+2\bmu [B(x^0)]_+,  B(x) \ge -1-[B(x^0)]_+\}, \\ [5pt]
\cS_2 = \left\{x\in\Omega^{\rm o}: \frac{f(x)}{-B(x)} \le \frac{[f(x^0)]_+}{1+[B(x^0)]_+}+2\bmu, B(x) \le -1-[B(x^0)]_+\right\},
\ea
\]
and $[t]_+=\max\{t,0\}$ for all $t\in\bR$. Thus the assumption in \eqref{level-bdd} holds if $\cS_1$ and $\cS_2$ are bounded,  
which, for example, holds for $f(x)=\ell(x)+\sum_{i=1}^nx_i^p$, $B(x)=-\sum_{i=1}^n\ln x_i$ and $\cK=\bR^n_+$ that are studied in \cite{BCY2015}, where $\ell:\bR^n \to \bR_+$ is a loss function and $p>0$.

\item[(ii)] Assumption \ref{main-assump}(b) means that $\nabla^2 f$ is locally Lipschitz continuous in $\cS$ with respect to the local norms. It holds if $\nabla^2 f$ is globally Lipschitz continuous in $\rmint\cK$, which is implicitly assumed in \cite{OW21} for the case where $A=0$, $b=0$ and $\cK=\bR^n_+$. Compared to the usual global Lipschitz continuity assumption on $\nabla^2 f$ in $\rmint\cK$, Assumption \ref{main-assump}(b) is generally weaker and holds for a broader class of problems. For example, Assumption \ref{main-assump}(b) holds for the problem with $f(x)=\sum_i x^p_i$ and $\cK=\bR^n_+$ for some $p\in (0,1)$, while $\nabla^2 f$ is not globally Lipschitz continuous in $\rmint\cK$. 

\item[(iii)] Since $\cS$ is assumed to be bounded, Assumption  \ref{main-assump}(c) can easily hold under some additional yet mild assumption on $f$. For example, by the boundedness of $\cS$ and Theorem \ref{bound-nabla-2B}, one can see that Assumption  \ref{main-assump}(c) holds if $\nabla f$ and $\nabla^2 f$ are continuous in $\cK$. In addition, one can verify  that Assumption \ref{main-assump}(c) also holds if $f$ and $\nabla f$ are locally Lipschitz continuous in $\cS$ with respect to the local norms, that is,
\[
\ba{l}
|f(y)-f(x)| \le U_g \|x-y\|_x\ \quad \forall x\in \cS, y\in\{y: \|y-x\|_x \le \beta \},  \\ [6pt]
\|\nabla f(y)-\nabla f(x)\|^*_x \le U_H \|x-y\|_x\ \quad \forall x\in \cS, y\in\{y: \|y-x\|_x \le \beta \}.
\ea
\]
These relations hold for a broad class of problems, such as the one with $f(x)=\sum_i x^p_i$ and $\cK=\bR^n_+$ for some $p\in (0,1)$. Note that Assumption \ref{main-assump}(c) is generally weaker than the one imposed in \cite{OW21} that $\nabla f$ and $\nabla^2 f$ are bounded in some level set of $f$, which, for example, does not hold for $f(x)=\sum_i x^p_i$ and $\cK=\bR^n_+$ for some $p\in (0,1)$. 

\item[(iv)] As will be shown in Lemma~\ref{feasibility}, each iterate $x^k$ of Algorithm \ref{alg:BNCG} lies in $\cS$. By this and Assumption~\ref{main-assump}(c), one can see that
\beq \label{U-bounds}
\|\nabla f(x^k)\|^*_{x^k}\le U_g, \quad \|\nabla^2 f(x^k)\|^*_{x^k} \le U_H.
\eeq
\ei

In addition, as a consequence of Assumption~\ref{main-assump}(b), the following two inequalities hold, which will play a crucial role in our subsequent analysis.
\begin{lemma}
Under Assumption~\ref{main-assump}(b), the following inequalities hold:
\begin{equation}\label{ineq:smooth-ppty1}
\|\nabla f(y)-\nabla f(x)-\nabla^2 f(x)(y-x)\|^*_x \le \frac{1}{2}L_{H}\|y-x\|^2_x\ \quad \forall x\in \cS,y\in\{y: \|y-x\|_x \le \beta\}, 
\end{equation}
\vspace{-4mm}
\begin{equation}\label{ineq:smooth-ppty2}
f(y)\le f(x) + \nabla f(x)^T(y-x) + \frac{1}{2} (y-x)^T\nabla^2 f(x)(y-x)+\frac{1}{6}L_{H}\|y-x\|^3_x\ \quad \forall x\in \cS,y\in\{y: \|y-x\|_x \le \beta\},
\end{equation}
where $\cS$ and $L_H$ are given in \eqref{level-bdd} and \eqref{hes-Lip}, respectively.
\end{lemma}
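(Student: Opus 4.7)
The plan is to deduce both inequalities from Taylor's theorem with integral remainder, converted into the local-norm setting by combining Assumption~\ref{main-assump}(b) with the operator-norm identity \eqref{M-norm} (which yields $\|Mv\|^*_x \le \|M\|^*_x\,\|v\|_x$) and the local Cauchy--Schwarz inequality $|\langle u,v\rangle| \le \|u\|_x\,\|v\|^*_x$. First I would observe that for any $y$ with $\|y-x\|_x\le\beta<1$, Lemma~\ref{lem:barrier-property}(iii) guarantees that the entire segment $\{x_t := x+t(y-x): t\in[0,1]\}$ lies in $\rmint\cK$, and moreover $\|x_t-x\|_x = t\|y-x\|_x \le \beta$, so Assumption~\ref{main-assump}(b) applies at $x_t$ in place of $y$.

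For the first inequality, write
\[
\nabla f(y)-\nabla f(x)-\nabla^2 f(x)(y-x) = \int_0^1 [\nabla^2 f(x_t)-\nabla^2 f(x)](y-x)\,\dx t.
\]
Apply $\|\cdot\|^*_x$, pull the norm inside the integral, and bound the integrand via \eqref{M-norm} to get
\[
\|[\nabla^2 f(x_t)-\nabla^2 f(x)](y-x)\|^*_x \le \|\nabla^2 f(x_t)-\nabla^2 f(x)\|^*_x\,\|y-x\|_x \le L_H\,t\,\|y-x\|^2_x,
\]
using \eqref{hes-Lip}. Integration in $t$ over $[0,1]$ produces the factor $1/2$ and yields \eqref{ineq:smooth-ppty1}.

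For the second inequality, start from the second-order Taylor identity
\[
f(y) = f(x) + \nabla f(x)^T(y-x) + \tfrac{1}{2}(y-x)^T\nabla^2 f(x)(y-x) + R,
\]
where, using $\int_0^1 (1-t)\,\dx t = 1/2$,
\[
R = \int_0^1 (1-t)\,(y-x)^T[\nabla^2 f(x_t)-\nabla^2 f(x)](y-x)\,\dx t.
\]
Bound the integrand by applying the local Cauchy--Schwarz inequality and then the already-derived pointwise bound: for each $t$,
\[
\bigl|(y-x)^T[\nabla^2 f(x_t)-\nabla^2 f(x)](y-x)\bigr| \le \|y-x\|_x\cdot L_H t\,\|y-x\|^2_x = L_H t\,\|y-x\|^3_x.
\]
Integrating $\int_0^1 t(1-t)\,\dx t = 1/6$ gives $R \le \tfrac{1}{6}L_H\|y-x\|^3_x$, which is \eqref{ineq:smooth-ppty2}.

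There is no real obstacle: both inequalities are classical consequences of Hessian Lipschitzness, the only subtlety being the use of local (not Euclidean) norms, which is handled uniformly by the operator-norm definition \eqref{M-norm} and the duality between $\|\cdot\|_x$ and $\|\cdot\|^*_x$. The only point warranting explicit mention is the verification that the integration path $\{x_t\}_{t\in[0,1]}$ stays inside $\rmint\cK$ so that $\nabla^2 f$ and the Lipschitz bound are defined throughout, which follows immediately from $\beta<1$ and Lemma~\ref{lem:barrier-property}(iii).
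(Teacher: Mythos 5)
Your proof is correct and takes essentially the same approach as the paper: both parts are integral-remainder Taylor arguments, with the operator-norm inequality from \eqref{M-norm} and the local Cauchy--Schwarz duality $|\langle u,v\rangle|\le\|u\|_x\|v\|^*_x$ doing the norm bookkeeping. The only cosmetic difference is in \eqref{ineq:smooth-ppty2}: you use the second-order Taylor remainder $\int_0^1(1-t)(y-x)^T[\nabla^2 f(x_t)-\nabla^2 f(x)](y-x)\,\dx t$ and bound it directly, whereas the paper writes $f(y)-f(x)$ as $\int_0^1\nabla f(x_t)^T(y-x)\,\dx t$ and then reuses \eqref{ineq:smooth-ppty1} pointwise at $x_t$; both yield the same $1/6$ factor (via $\int_0^1 t(1-t)\,\dx t=\int_0^1\tfrac{1}{2}t^2\,\dx t=1/6$). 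Your explicit check that the segment $\{x_t\}$ stays within the $\beta$-ball, so that \eqref{hes-Lip} applies at every integration point, is a detail the paper leaves implicit, and is a reasonable addition.
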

	
\begin{proof}
Fix any $x\in\cS$ and $y\in\{y: \|y-x\|_x\le\beta\}$.  One has
\[
\ba{lcl}
\|\nabla f(y)-\nabla f(x)-\nabla^2 f(x)(y-x)\|^*_x &=& \|\int^1_0 [\nabla^2 f(x+t(y-x))-\nabla^2 f(x)]dt(y-x)\|^*_x \\ [5pt]
&\overset{\eqref{M-norm}}{\le}&   \|\int^1_0 [\nabla^2 f(x+t(y-x))-\nabla^2 f(x)]dt\|^*_x\cdot \|y-x\|_x  \\ [5pt]
&\le&  \int^1_0 \|\nabla^2 f(x+t(y-x))-\nabla^2 f(x)\|^*_xdt \cdot \|y-x\|_x \\ [5pt]
&\overset{\eqref{hes-Lip}}{\le}&  L_H\int^1_0 \|t(y-x)\|_xdt \cdot \|y-x\|_x = \frac{1}{2}L_{H}\|y-x\|^2_x,
\ea
\]
and hence \eqref{ineq:smooth-ppty1} holds. We next prove \eqref{ineq:smooth-ppty2}. Indeed, one has
\[
\ba{l}
f(y) - f(x) - \nabla f(x)^T(y-x) - \frac{1}{2}(y-x)^T\nabla^2 f(x)(y-x)  \\
= \langle \int^1_0 [\nabla f(x+t(y-x))-\nabla f(x) -\nabla^2 f(x)t(y-x)]dt, y-x \rangle  \\ [5pt]
\le \| \int^1_0 [\nabla f(x+t(y-x))-\nabla f(x) -\nabla^2 f(x)t(y-x)]dt\|^*_x \cdot \|y-x\|_x  \\ [5pt]
\le \int^1_0 \|\nabla f(x+t(y-x))-\nabla f(x) -\nabla^2 f(x)t(y-x)\|^*_xdt \cdot \|y-x\|_x  \\ [5pt]
\overset{\eqref{ineq:smooth-ppty1}}{\le}  \frac{1}{2}L_{H} \int^1_0 \|t(y-x)\|^2_xdt \cdot \|y-x\|_x   = \frac{1}{6}L_{H}\|y-x\|^3_x,
\ea
\]
where the last inequality follows from \eqref{ineq:smooth-ppty1} with $y$ replaced by $x+t(y-x)$ for $t\in[0,1]$.
\end{proof}

\subsection{Iteration complexity}
In this subsection we establish iteration complexity results for Algorithm \ref{alg:BNCG} for solving problem \eqref{conic-prob}. Before proceeding, we establish several lemmas that will be used later.

The following lemma shows that all the iterates generated by Algorithm \ref{alg:BNCG} belong to the set $\cS$.

\begin{lemma} \label{feasibility}
Let $\{x^k\}_{k\in \bbK}$ be all the iterates generated by Algorithm \ref{alg:BNCG}, where $\bbK$ is a subset of consecutive nonnegative integers starting from $0$. Then $x^k \in \cS$ for every $k\in\bbK$, where $\cS$ is given in \eqref{level-bdd}.
\end{lemma}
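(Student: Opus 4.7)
The plan is to proceed by induction on $k \in \bbK$, showing that each iterate lies in $\Omega^{\rm o}$ and satisfies $\phi_\mu(x^k) \le \phi_\mu(x^0)$, which is enough to place it in $\cS$ since the choice of $\mu$ in Algorithm~\ref{alg:BNCG} satisfies $\mu \in (0,\bmu]$ by Assumption~\ref{main-assump}(a). The base case $k=0$ is immediate: $x^0\in\Omega^{\rm o}$ by hypothesis and $\phi_\mu(x^0) \le \phi_\mu(x^0)$ trivially.

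For the inductive step, assume $x^k \in \cS$; in particular $x^k \in \Omega^{\rm o}$ and $\phi_\mu(x^k) \le \phi_\mu(x^0)$. First I would establish affine feasibility of $x^{k+1}$. Using the definitions \eqref{Qk} and \eqref{Pk}, a direct calculation gives
\[
AP_k \;=\; AM_kQ_k \;=\; AM_k - AM_k M_k^TA^T (AM_kM_k^TA^T)^{-1}AM_k \;=\; 0.
\]
Since $x^{k+1} = x^k + \alpha_k P_k d^k$, this yields $Ax^{k+1} = Ax^k = b$.

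Next I would argue that $x^{k+1} \in \rmint\cK$ and that $\phi_\mu$ decreases. Both line search rules \eqref{ls-sol} and \eqref{ls-nc} demand
\[
\phi_\mu(x^{k+1}) \;<\; \phi_\mu(x^k),
\]
so in particular $\phi_\mu(x^{k+1}) < \infty$; since $B$ is a barrier function for $\cK$ (condition (ii) in the definition of the $\vartheta$-LHSC barrier), any point where $\phi_\mu$ is finite must lie in $\rmint\cK$. Combined with $Ax^{k+1}=b$, this gives $x^{k+1}\in\Omega^{\rm o}$. Chaining the descent inequality with the inductive hypothesis produces $\phi_\mu(x^{k+1}) < \phi_\mu(x^0)$, hence $x^{k+1} \in \cS$ by \eqref{level-bdd} applied with $\tmu=\mu$.

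The only subtlety is ensuring the line search actually terminates so that $x^{k+1}$ is well defined, but this is not needed to establish the lemma's conclusion: the statement quantifies over iterates that \emph{are} generated, and for any such iterate the descent inequality holds by construction of $\alpha_k$. Thus the argument reduces to the two verifications above — namely $AP_k=0$ and the barrier-induced interiority — and the induction carries through for every $k\in\bbK$.
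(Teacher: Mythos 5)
Your proof reaches the right conclusion by induction, and the base case and the affine-feasibility step ($AP_k = 0$, hence $Ax^{k+1} = b$) match the paper exactly. Where you diverge is in the interiority argument, and the divergence is genuine.

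The paper establishes $x^{k+1} \in \rmint\cK$ constructively: it first notes that the scaling built into \eqref{dk-nc}--\eqref{dk-2nd-nc} forces $\|Q_k d^k\| \le \beta$, then computes
$\|x^{k+1}-x^k\|_{x^k} = \alpha_k\|P_k d^k\|_{x^k} \le \|M_k Q_k d^k\|_{x^k} = \|Q_k d^k\| \le \beta < 1$,
and invokes the Dikin-ellipsoid inclusion, Lemma~\ref{lem:barrier-property}(iii). You instead argue post hoc: if the line search accepted the step, then $\phi_\mu(x^{k+1}) < \phi_\mu(x^k) < \infty$, and the barrier property forces $x^{k+1}$ into $\rmint\cK$. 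That argument is sound, but it implicitly adopts the extended-value convention $\phi_\mu \equiv +\infty$ on $\cK \setminus \rmint\cK$ (and outside $\cK$) so that ``$\phi_\mu(x^{k+1}) < \infty$'' is even a meaningful test; $B$ as defined lives only on $\rmint\cK$, so without this convention the inequality in \eqref{ls-sol}/\eqref{ls-nc} is not well posed at a trial point outside the domain. The paper sidesteps this entirely: the Dikin bound shows \emph{every} trial point $x^k + \theta^j P_k d^k$, $j\ge 0$, lies in $\rmint\cK$, so the line search evaluations are always well defined. Moreover the inequality $\|x^{k+1}-x^k\|_{x^k}\le\beta$ that falls out of the paper's argument is not throwaway — it is reused repeatedly (e.g.\ in Lemmas~\ref{lem:SOL-step-lowerbd1}, \ref{lem:SOL-step-lowerbd2}, \ref{lem:NC-step-lowerbd} and Theorem~\ref{iter-complexity}(iii)) to license the local-Lipschitz estimates \eqref{ineq:smooth-ppty1}--\eqref{ineq:smooth-ppty2} and the dual-norm comparisons in Lemma~\ref{lem:barrier-property}(iv). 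So your route is a valid shortcut to the lemma's conclusion, but the paper's is both more self-contained (no extended-value bookkeeping) and more economical at the level of the whole complexity analysis.
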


\begin{proof}
We prove this lemma by induction. By the choice of $x^0$, one knows that $x^0\in \Omega^{\rm o}$, and hence $x^0\in\cS$ due to \eqref{level-bdd}. Suppose that $x^k\in\cS$ is generated at iteration $k$ of Algorithm \ref{alg:BNCG}, and moreover, $x^{k+1}$ is generated at iteration $k+1$. We now show that $x^{k+1}\in\cS$. Indeed, notice from Algorithm \ref{alg:BNCG} that $x^{k+1}=x^k+\alpha_k P_kd^k$ with $\alpha_k \in (0,1]$ and $d^k$ given in one of \eqref{dk-nc}-\eqref{dk-2nd-nc}. It follows from 
\eqref{dk-nc}-\eqref{dk-2nd-nc} that $\|Q_kd^k\|\le \beta$. By these, \eqref{Mk} and \eqref{Pk},  one has that 
\beq \label{xk-change}
\|x^{k+1}-x^k\|_{x^k} = \alpha_k \|P_kd^k\|_{x^k}   \le \|P_kd^k\|_{x^k} \overset{\eqref{Pk}}{=} \|M_kQ_kd^k\|_{x^k} \overset{\eqref{Mk}}{=} \|Q_kd^k\| \le \beta.
\eeq
In view of $x^k\in\cS$ and \eqref{level-bdd}, one can see that $x^k\in \Omega^0$. Hence,  
$Ax^k=b$ and $x^k\in\rmint\cK$. Using \eqref{xk-change}, $x^k\in\rmint \cK$, $\beta<1$ and Lemma \ref{lem:barrier-property}(iii), we obtain that $x^{k+1}\in\rmint\cK$. In addition, it follows from \eqref{Pk} that 
\[
AP_kd^k=A[M_k-M_kM_k^TA^T(AM_kM_k^TA^T)^{-1}AM_k]d^k=0,
\]
which, together with $Ax^k=b$ and $x^{k+1}=x^k+\alpha_k P_kd^k$, implies that $Ax^{k+1}=b$. 
It follows that $x^{k+1}\in\Omega^{\rm o}$. Observe from Algorithm \ref{alg:BNCG} that 
$\{\phi_{\mu}(x^k)\}_{k\in\bbK}$ is descent, and hence $\phi_{\mu}(x^{k+1}) \le \phi_\mu(x^0)$. 
By this, $x^{k+1}\in\Omega^{\rm o}$, $\mu \le \bmu$ and \eqref{level-bdd}, one can conclude that $x^{k+1}\in\cS$, and hence the induction is completed.
\end{proof}

The lemma below states some properties of the direction $d^k$ arising in Algorithm \ref{alg:BNCG} that results from applying Algorithm \ref{alg:capped-CG} to \eqref{linsys} with $H=P_k^T\nabla^2\phi_\mu(x^k)P_k$, $\varepsilon=\sqrt{\epsilon}$, $g=P_k^T\nabla\phi_\mu(x^k)$. Its proof is similar to the ones in \cite[Lemma~7]{OW21} and \cite[Lemma~3]{ROW20} and thus omitted here.

\begin{lemma}\label{lem:SOL-NC-ppty}
Suppose that the direction $d^k$ results from the output $\widehat{d}^k$ of Algorithm \ref{alg:capped-CG}  with a type specified in d$\_$type  at some iteration $k$ of Algorithm \ref{alg:BNCG}. Let $Q_k$ and $P_k$ be given in \eqref{Qk} and \eqref{Pk}, respectively. 
Then the following statements hold.
\begin{enumerate}[{\rm (i)}]
\item If d$\_$type=SOL, then  $d^k$ satisfies
\beqa
&\sqrt{\epsilon}\|d^k\|^2\le (d^k)^T\left(P_k^T\nabla^2\phi_{\mu}(x^k)P_k+2\sqrt{\epsilon}I\right)d^k,\label{SOL-ppty-1}\\
&\|d^k\|\le1.1 \epsilon^{-1/2}\|P_k^T\nabla\phi_{\mu}(x^k)\|,\label{SOL-ppty-2}\\
&(d^k)^TP_k^T\nabla\phi_{\mu}(x^k)=-\gamma_k(d^k)^T\left(P_k^T\nabla^2\phi_{\mu}(x^k)P_k+2\sqrt{\epsilon}I\right)d^k,\label{SOL-ppty-3}
\eeqa
where $\gamma_k=\max\{\|Q_k\widehat{d}^k\|/\beta,1\}$. If $\|Q_k\widehat{d}^k\|\le\beta$, then $d^k$ also satisfies
\begin{equation}\label{ppty-residual-bound}
\|(P_k^T\nabla^2\phi_{\mu}(x^k)P_k+2\sqrt{\epsilon}I)d^k+P_k^T\nabla\phi_{\mu}(x^k)\|\le\sqrt{\epsilon}\zeta\|d^k\|/2.
\end{equation}
\item If d$\_$type=NC, then  $d^k$ satisfies $(d^k)^TP_k^T\nabla\phi_{\mu}(x^k)\le0$ and
\[
\frac{(d^k)^TP_k^T\nabla^2\phi_{\mu}(x^k)P_kd^k}{\|d^k\|^2}\le-\|d^k\|\le-\sqrt{\epsilon}.
\]
\end{enumerate}
\end{lemma}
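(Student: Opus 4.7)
My plan is to exploit the fact that Algorithm~\ref{alg:capped-CG}, when applied with $H=P_k^T\nabla^2\phi_\mu(x^k)P_k$, $\varepsilon=\sqrt{\epsilon}$, and $g=P_k^T\nabla\phi_\mu(x^k)$, is essentially a CG iteration on $(H+2\sqrt{\epsilon}I)\widehat{d}=-g$ with additional safeguards, so its output $\widehat{d}^k$ lies in the Krylov subspace generated by $g$ and $H+2\sqrt{\epsilon}I$ and inherits the standard CG orthogonality identities. Since $d^k$ in Algorithm~\ref{alg:BNCG} is just a scalar rescaling of $\widehat{d}^k$ (by $\gamma_k^{-1}\in(0,1]$ in the SOL case via \eqref{dk-sol}, and by some signed scalar $c$ in the NC case via \eqref{dk-nc}), my strategy is to first establish the desired relations for $\widehat{d}^k$ from the capped CG termination conditions, then transfer them to $d^k$ by carefully tracking the scaling factor.

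For part~(i), the SOL termination of Algorithm~\ref{alg:capped-CG} guarantees $(\widehat{d}^k)^T H\widehat{d}^k\ge-\sqrt{\epsilon}\|\widehat{d}^k\|^2$, i.e.\ $(\widehat{d}^k)^T(H+2\sqrt{\epsilon}I)\widehat{d}^k\ge\sqrt{\epsilon}\|\widehat{d}^k\|^2$. Since $d^k=\gamma_k^{-1}\widehat{d}^k$ with $\gamma_k\ge 1$, rescaling both sides by $\gamma_k^{-2}$ yields \eqref{SOL-ppty-1}. For \eqref{SOL-ppty-2}, I would combine the residual bound $\|(H+2\sqrt{\epsilon}I)\widehat{d}^k+g\|\le\widehat\zeta\|g\|$ supplied by capped CG with the lower bound $\lambda_{\min}(H+2\sqrt{\epsilon}I)\ge\sqrt{\epsilon}$ on the relevant Krylov subspace to obtain $\|\widehat{d}^k\|\le(1+\widehat\zeta)\epsilon^{-1/2}\|g\|$, then plug in the explicit value of $\widehat\zeta$ used by Algorithm~\ref{alg:capped-CG} so that $1+\widehat\zeta\le 1.1$; the bound on $\|d^k\|$ follows since $\gamma_k\ge 1$. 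For \eqref{SOL-ppty-3}, I would invoke the classical CG identity $(\widehat{d}^k)^T g=-(\widehat{d}^k)^T(H+2\sqrt{\epsilon}I)\widehat{d}^k$, which is a consequence of the CG residual at $\widehat{d}^k$ being orthogonal to every prior search direction and hence to $\widehat{d}^k$ itself; rescaling by $\gamma_k^{-1}$ on the left and $\gamma_k^{-2}$ on the right then produces \eqref{SOL-ppty-3}, with the asymmetric factor $\gamma_k$ precisely absorbing the mismatch between linear and quadratic scaling. Finally, \eqref{ppty-residual-bound} applies in the subcase $\|Q_k\widehat{d}^k\|\le\beta$, where $\gamma_k=1$ and $d^k=\widehat{d}^k$, so the bound reduces to the capped CG residual bound followed by a conversion of $\widehat\zeta\|g\|$ into $\sqrt{\epsilon}\zeta\|d^k\|/2$ using \eqref{SOL-ppty-3} together with the calibration of $\widehat\zeta$ relative to $\zeta$ inside Algorithm~\ref{alg:capped-CG}.

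For part~(ii), the NC termination gives $(\widehat{d}^k)^T H\widehat{d}^k<-\sqrt{\epsilon}\|\widehat{d}^k\|^2$. Writing $d^k=c\widehat{d}^k$ with $c$ the signed scalar from \eqref{dk-nc} whose sign is $-\sgn(g^T\widehat{d}^k)$, the inequality $(d^k)^T P_k^T\nabla\phi_\mu(x^k)\le 0$ is immediate. The Rayleigh quotient is scale-invariant, hence $(d^k)^T H d^k/\|d^k\|^2=(\widehat{d}^k)^T H\widehat{d}^k/\|\widehat{d}^k\|^2<-\sqrt{\epsilon}$, which is the second inequality. For the first inequality I will case-split on which term in the $\min$ defining $|c|$ is active: when $|c|=|(\widehat{d}^k)^T H\widehat{d}^k|/\|\widehat{d}^k\|^3$, a short calculation shows $\|d^k\|=-(\widehat{d}^k)^T H\widehat{d}^k/\|\widehat{d}^k\|^2$, exactly equal to $-(d^k)^T H d^k/\|d^k\|^2$; when instead $|c|=\beta/\|Q_k\widehat{d}^k\|$ is active, $|c|$ is the smaller of the two candidates, so $\|d^k\|\le-(d^k)^T H d^k/\|d^k\|^2$. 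Either way the claim holds.

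The main obstacle I anticipate is the careful bookkeeping required to extract the exact constants (the $1.1$ in \eqref{SOL-ppty-2} and the $\sqrt{\epsilon}\zeta/2$ in \eqref{ppty-residual-bound}): they force one to match the internal capped CG parameters ($\widehat\zeta$, $T$, $\tau$ in Algorithm~\ref{alg:capped-CG}) against the accuracy parameter $\zeta$ supplied by Algorithm~\ref{alg:BNCG}, which is delicate but essentially routine once the explicit expressions from Algorithm~\ref{alg:capped-CG} are unrolled. By contrast, the algebraic scaling arguments themselves are clean once the CG orthogonality identity $(\widehat{d}^k)^T g=-(\widehat{d}^k)^T(H+2\sqrt{\epsilon}I)\widehat{d}^k$ is in hand.
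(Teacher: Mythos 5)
The paper omits this proof (referring to \cite[Lemma 3]{ROW20} and \cite[Lemma 7]{OW21}), so I can only judge your plan on its own merits. Your overall strategy---read off properties of $\widehat{d}^k$ from the capped CG termination conditions, then transfer them to $d^k$ by tracking the scalar rescaling---is the standard and correct approach, and most of what you write is fine: the arguments for \eqref{SOL-ppty-1}, \eqref{SOL-ppty-3}, and the Rayleigh-quotient/case-split for the first inequality of part~(ii) are all sound. There are, however, two genuine gaps and one imprecision.

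First, for \eqref{ppty-residual-bound} your proposed tool is the wrong one. You say you will ``convert $\widehat\zeta\|g\|$ into $\sqrt{\epsilon}\zeta\|d^k\|/2$ using \eqref{SOL-ppty-3}.'' But \eqref{SOL-ppty-3} (with $\gamma_k=1$) combined with \eqref{SOL-ppty-1} and Cauchy--Schwarz gives $\|g\|\ge\sqrt{\epsilon}\,\|d^k\|$, i.e.\ a \emph{lower} bound on $\|g\|$ in terms of $\|d^k\|$, whereas what you need is an \emph{upper} bound of the form $\|g\|\le C\|d^k\|$. The correct ingredient is the operator-norm bound $U$ that Algorithm~\ref{alg:capped-CG} maintains: at SOL termination one has $\|H\widehat{d}^k\|\le U\|\widehat{d}^k\|$, hence $\|\bar H\widehat{d}^k\|\le(U+2\sqrt{\epsilon})\|\widehat{d}^k\|=\kappa\sqrt{\epsilon}\,\|\widehat{d}^k\|$, and then writing $g=r^j-\bar H\widehat{d}^k$ gives $\|g\|\le\widehat\zeta\|g\|+\kappa\sqrt{\epsilon}\,\|\widehat{d}^k\|$, so $\|g\|\le\frac{\kappa\sqrt{\epsilon}}{1-\widehat\zeta}\|d^k\|$; plugging $\widehat\zeta=\zeta/(3\kappa)\le 1/6$ into $\widehat\zeta\|g\|$ then yields the $\sqrt{\epsilon}\zeta\|d^k\|/2$ bound.

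Second, in part~(ii) you establish the first inequality $\tfrac{(d^k)^THd^k}{\|d^k\|^2}\le -\|d^k\|$ by the case split, and you establish $\tfrac{(d^k)^THd^k}{\|d^k\|^2}<-\sqrt{\epsilon}$ by scale invariance, but the stated chain also requires the separate fact $\|d^k\|\ge\sqrt{\epsilon}$, which does not follow from the two things you proved (the Rayleigh quotient being less than $-\sqrt{\epsilon}$ together with the Rayleigh quotient being at most $-\|d^k\|$ says nothing about the relative ordering of $\|d^k\|$ and $\sqrt{\epsilon}$). In the branch where the first candidate in the $\min$ of \eqref{dk-nc} is active you get $\|d^k\|=-(\widehat{d}^k)^TH\widehat{d}^k/\|\widehat{d}^k\|^2>\sqrt{\epsilon}$; in the other branch you need to observe that all capped CG iterates lie in $\mathrm{range}(Q_k)$ (since $g$ and $H$ are both of the form $Q_k\cdot$), so $Q_k\widehat{d}^k=\widehat{d}^k$ and hence $\|d^k\|=\beta$, and then invoke the input constraint $\beta\ge\sqrt{\epsilon}$ from Algorithm~\ref{alg:BNCG}. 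Finally, a small imprecision in your argument for \eqref{SOL-ppty-2}: you do not actually have $\lambda_{\min}(\bar H)\ge\sqrt{\epsilon}$ on the Krylov subspace (the capped CG may not have detected all negative curvature there); what you do have is the Rayleigh-quotient bound $(\widehat{d}^k)^T\bar H\widehat{d}^k\ge\sqrt{\epsilon}\|\widehat{d}^k\|^2$ at $\widehat{d}^k$ itself, and Cauchy--Schwarz then gives $\|\bar H\widehat{d}^k\|\ge\sqrt{\epsilon}\|\widehat{d}^k\|$, which is all you need (or, more simply, combine \eqref{SOL-ppty-1} with \eqref{SOL-ppty-3} and Cauchy--Schwarz to get $\|d^k\|\le\gamma_k^{-1}\epsilon^{-1/2}\|g\|\le\epsilon^{-1/2}\|g\|$ directly, which already beats $1.1\,\epsilon^{-1/2}$).
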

	
The next lemma considers the case where the direction $d^k$ in Algorithm \ref{alg:BNCG}
 results from the output of Algorithm \ref{alg:capped-CG} with d$\_$type=SOL, and moreover, the unit step length is accepted by the line search procedure. For this case, it will be shown that $\|d^k\|$ cannot be too small or the next iterate $x^{k+1}$ is an approximate first-order stationary point. 

\begin{lemma}\label{lem:SOL-step-lowerbd1}
Suppose that the direction $d^k$ results from the output of Algorithm \ref{alg:capped-CG} with d$\_$type=SOL at some iteration $k$ of Algorithm \ref{alg:BNCG}, and the unit step length is accepted by the line search procedure, that is, $x^{k+1}=x^k+P_kd^k$. Then we have $\|d^k\|\ge c_d\sqrt{\epsilon}$ or 
\begin{equation}\label{optcond:1st-iterate-k+1}
\|\nabla f(x^{k+1})+A^T\lambda_{k+1}^{(2)}+\mu\nabla B(x^k)\|_{x^{k+1}}^*\le (1-\beta)\mu,
\end{equation}
where 
\beq  \label{c_d}
{c_d=\frac{(1-\beta)^3}{(L_H +\zeta+4)[(1-\beta)^2+\sqrt{\vartheta}]+1-\beta}}, \qquad
\lambda_{k+1}^{(2)}=R_k(\nabla^2 f(x^k)P_kd^k + \nabla\phi_{\mu}(x^k)),
\eeq
and $P_k$ and $R_k$ are given in \eqref{Pk} and \eqref{Rk}, respectively.
\end{lemma}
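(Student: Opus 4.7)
The plan is to handle the two branches of the truncation in \eqref{dk-sol}. In the branch where $\|Q_k\widehat{d}^k\| > \beta$, the formula forces $d^k = (\beta/\|Q_k\widehat{d}^k\|)\widehat{d}^k$, and since $Q_k$ is an orthogonal projection, $\|d^k\| = \beta\|\widehat{d}^k\|/\|Q_k\widehat{d}^k\| \geq \beta \geq \sqrt{\epsilon} \geq c_d\sqrt{\epsilon}$ (using $\beta \geq \sqrt{\epsilon}$ from the input together with the easy bound $c_d \leq 1$ implied by its formula). So the first alternative holds. The remainder of the argument treats the branch $\|Q_k\widehat{d}^k\| \leq \beta$, where $d^k = \widehat{d}^k$ and the residual bound \eqref{ppty-residual-bound} applies; I will show that if $\|d^k\| < c_d\sqrt{\epsilon}$, then \eqref{optcond:1st-iterate-k+1} holds.

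Set $w := \nabla f(x^{k+1}) + A^T\lambda_{k+1}^{(2)} + \mu\nabla B(x^k)$, $g_k := \nabla\phi_\mu(x^k)$, $H_k := \nabla^2 f(x^k)$. After adding and subtracting $\nabla f(x^k) + H_k P_k d^k$ and using the definition of $\lambda_{k+1}^{(2)}$ in \eqref{c_d}, one obtains $w = e_1 + (I+A^TR_k)(g_k + H_k P_k d^k)$, where $e_1 := \nabla f(x^{k+1})-\nabla f(x^k) - H_k P_k d^k$. The central algebraic step combines three identities: (i) $P_k^T = M_k^T(I+A^TR_k)$ from \eqref{eq:relation-Pk-Rk}; (ii) $M_k^T\nabla^2 B(x^k) M_k = I$, which follows from \eqref{Mk} and yields $M_k^T\nabla^2 B(x^k) P_k = Q_k$; and (iii) $AP_k d^k = 0$, which gives $(I+A^TR_k)\nabla^2 B(x^k) P_k d^k = \nabla^2 B(x^k) P_k d^k$. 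Writing the damped Newton residual identity $(P_k^T\nabla^2\phi_\mu(x^k) P_k + 2\sqrt{\epsilon}I)d^k + P_k^T g_k = r^k$ via $\nabla^2\phi_\mu = H_k + \mu\nabla^2 B$, splitting off the $\mu\nabla^2 B(x^k) P_k d^k$ piece using (ii)--(iii), and inverting $M_k^T$ then produces $(I+A^TR_k)(g_k + H_k P_k d^k) = M_k^{-T}\bigl(r^k - \mu Q_k d^k - 2\sqrt{\epsilon}d^k\bigr)$, so that $w = e_1 + M_k^{-T}\bigl(r^k - \mu Q_k d^k - 2\sqrt{\epsilon}d^k\bigr)$.

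Next I would bound $\|w\|_{x^k}^* = \|M_k^T w\|$ by the triangle inequality as $\|w\|_{x^k}^* \leq \|e_1\|_{x^k}^* + \|r^k\| + \mu\|Q_k d^k\| + 2\sqrt{\epsilon}\|d^k\|$. Since $x^k \in \cS$ by Lemma \ref{feasibility} and $\|x^{k+1}-x^k\|_{x^k} = \|Q_k d^k\| \leq \beta$, inequality \eqref{ineq:smooth-ppty1} applies and gives $\|e_1\|_{x^k}^* \leq \tfrac{L_H}{2}\|Q_k d^k\|^2 \leq \tfrac{L_H}{2}\|d^k\|^2$; \eqref{ppty-residual-bound} gives $\|r^k\| \leq \tfrac{\sqrt{\epsilon}\zeta}{2}\|d^k\|$; and $\|Q_k d^k\| \leq \|d^k\|$ controls the remaining term. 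Lemma \ref{lem:barrier-property}(iv) then converts to $\|w\|_{x^{k+1}}^* \leq (1-\beta)^{-1}\|w\|_{x^k}^*$. Inserting the assumption $\|d^k\| < c_d\sqrt{\epsilon}$ and using $c_d \leq 1$ together with $\sqrt{\epsilon} \leq 1$ to bound $\|d^k\|^2 < c_d\epsilon$, every summand acquires a factor of $c_d$, and one reaches $\|w\|_{x^{k+1}}^* < \tfrac{1}{1-\beta}\bigl[\tfrac{c_d \epsilon(L_H+\zeta+4)}{2} + c_d\mu\sqrt{\epsilon}\bigr]$. Substituting $\mu = (1-\beta)\epsilon/(2((1-\beta)^2+\sqrt{\vartheta}))$, using $(1-\beta)\sqrt{\epsilon} \leq 1-\beta$, and plugging in the formula for $c_d$ shows this upper bound is at most $(1-\beta)\mu$, which is \eqref{optcond:1st-iterate-k+1}.

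The main obstacle is the algebraic step that absorbs the barrier-curvature contribution $\mu\nabla^2 B(x^k) P_k d^k$ coming from $\nabla^2\phi_\mu$ into the clean term $\mu Q_k d^k$ inside $M_k^{-T}(\cdot)$. This is what makes the somewhat unusual choice of $\nabla B(x^k)$ (rather than $\nabla B(x^{k+1})$) in the definitions of $\lambda_{k+1}^{(2)}$ and $w$ workable, and it hinges simultaneously on $AP_k d^k = 0$ and on the normalization $M_k^T\nabla^2 B(x^k) M_k = I$. Once the decomposition $w = e_1 + M_k^{-T}(r^k - \mu Q_k d^k - 2\sqrt{\epsilon}d^k)$ is in hand, verifying that the specific constant $c_d$ produces exactly the required inequality is a matter of careful bookkeeping.
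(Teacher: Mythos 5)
Your proposal is correct and follows essentially the same route as the paper's proof: split on whether the truncation in \eqref{dk-sol} activates, use $\beta\ge\sqrt{\epsilon}$ and $c_d\le 1$ when it does, and otherwise combine \eqref{ppty-residual-bound}, the identity $P_k^T=M_k^T(I+A^TR_k)$, $M_k^T\nabla^2 B(x^k)M_k=I$, and the Hessian Lipschitz bound \eqref{ineq:smooth-ppty1} to bound $\|\nabla f(x^{k+1})+A^T\lambda_{k+1}^{(2)}+\mu\nabla B(x^k)\|_{x^{k+1}}^*$ before plugging in $\|d^k\|<c_d\sqrt{\epsilon}$ and the formula for $\mu$. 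The only organizational difference is that you work in the $\|\cdot\|_{x^k}^*$ norm throughout and convert to $\|\cdot\|_{x^{k+1}}^*$ via Lemma~\ref{lem:barrier-property}(iv) only at the end, whereas the paper converts the intermediate inequality \eqref{ineq:residual-bound-2} to the $\|\cdot\|_{x^{k+1}}^*$ norm first and then applies the triangle inequality; the algebra and constants are identical.
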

	
\begin{proof}
Since $d^k$ results from the output $\widehat{d}^k$ of Algorithm \ref{alg:capped-CG} with d$\_$type=SOL,  it follows from Algorithm \ref{alg:BNCG} that \eqref{dk-sol} holds for $d^k$ and $\widehat{d}^k$.  In addition, one can observe from \eqref{xk-change} that $\|x^{k+1}-x^k\|_{x^k} \le \beta$. Also, by Lemma \ref{feasibility}, one has that $x^k\in \cS$. Hence, \eqref{ineq:smooth-ppty1} holds for $x=x^k$ and $y=x^{k+1}$.  Let $Q_k$ be given in \eqref{Qk}. We now divide the rest of the proof into two separate cases below.

Case 1) $\|Q_k\widehat{d}^k\|\ge\beta$. It then follows from \eqref{dk-sol} that $d^k=\beta \widehat{d}^k/\|Q_k\widehat{d}^k\|$. In addition, one can observe from Algorithm \ref{alg:BNCG} that $\beta \ge \sqrt{\epsilon}$. By these and $\|Q_k\|=1$, we have
\beq \label{dk-lwbd}
\sqrt{\epsilon}\le\beta=\|Q_kd^k\|\le\|Q_k\|\|d^k\|=\|d^k\|.
\eeq
Notice from \eqref{c_d} that $c_d\le1$, which together with \eqref{dk-lwbd} implies that $\|d^k\|\ge c_d\sqrt{\epsilon}$ and thus the conclusion holds.

Case 2) $\|Q_k\widehat{d}^k\|<\beta$.	 Notice that if $\|d^k\| \ge c_d\sqrt{\epsilon}$, the conclusion of this lemma holds. Hence, it suffices to consider the case where $\|Q_k\widehat{d}^k\|<\beta$  and $\|d^k\|<c_d\sqrt{\epsilon}$. 
We next show that  \eqref{optcond:1st-iterate-k+1} holds in this case. 
To this end, suppose for the rest of the proof that $\|Q_k\widehat{d}^k\|<\beta$  and $\|d^k\|<c_d\sqrt{\epsilon}$. 
Since d$\_$type=SOL and $\|Q_k\widehat{d}^k\|<\beta$, one can see from Lemma \ref{lem:SOL-NC-ppty}(i) that \eqref{ppty-residual-bound} holds for $d^k$.  By \eqref{Pk},  \eqref{ppty-residual-bound}  and the definition of $\phi_\mu$, one has that
\vspace{-3mm}
\begin{eqnarray}
\frac{1}{2}\sqrt{\epsilon}\zeta\|d^k\|&\overset{\eqref{ppty-residual-bound}}{\ge}&\left\|\left(P_k^T\nabla^2\phi_{\mu}(x^k)P_k+2\sqrt{\epsilon}I\right)d^k+P_k^T\nabla\phi_{\mu}(x^k)\right\|\nonumber\\
&=& \left\|\left(P_k^T(\nabla^2f(x^k)+\mu \nabla^2 B(x^k))P_k+2\sqrt{\epsilon}I\right)d^k+P_k^T\nabla\phi_{\mu}(x^k)\right\|\nonumber\\
&=&\left\|P_k^T\Big(\nabla^2f(x^k)P_kd^k+\nabla\phi_{\mu}(x^k)\Big)+\mu P_k^T\nabla^2 B(x^k)P_kd^k+2\sqrt{\epsilon} d^k\right\|\nonumber\\
&\ge&\left\|P_k^T\Big(\nabla^2f(x^k)P_kd^k+\nabla\phi_{\mu}(x^k)\Big)\right\|-\mu\|P_k^T\nabla^2 B(x^k)P_kd^k\|-2\sqrt{\epsilon}\|d^k\|\nonumber\\
&\overset{\eqref{Pk}}{\ge}&\left\|P_k^T\Big(\nabla^2f(x^k)P_kd^k+\nabla\phi_{\mu}(x^k)\Big)\right\|-\mu\|Q_k\|^2\|M_k^T\nabla^2 B(x^k)M_k\|\|d^k\|-2\sqrt{\epsilon}\|d^k\|\nonumber\\
&=&\left\|P_k^T\Big(\nabla^2f(x^k)P_kd^k+\nabla\phi_{\mu}(x^k)\Big)\right\|-\mu\|d^k\|-2\sqrt{\epsilon}\|d^k\|,\label{ineq:residual-bound-1}
\end{eqnarray}
where the first equality is due to the definition of $\phi_\mu$, the second inequality is due to the triangle inequality, and the last equality follows from the fact that $\|Q_k\|=1$ and $M_k^T\nabla^2 B(x^k)M_k=I$ (due to \eqref{Mk}). 
Using \eqref{ineq:dual-local-norm-iterate-ppty}, \eqref{Mk},  \eqref{eq:relation-Pk-Rk}, $\|x^{k+1}-x^k\|_{x^k} \le \beta$, and the definition of $\lambda_{k+1}^{(2)}$, we obtain that
\begin{eqnarray}
&&\left\|P_k^T\Big(\nabla^2f(x^k)P_kd^k+\nabla\phi_{\mu}(x^k)\Big)\right\|\nonumber\\
&&\overset{\eqref{eq:relation-Pk-Rk}}{=}\left\|M_k^T\Big(\nabla^2f(x^k)P_kd^k+\nabla\phi_{\mu}(x^k)\Big)+M_k^TA^TR_k\Big(\nabla^2f(x^k)P_kd^k+\nabla\phi_{\mu}(x^k)\Big)\right\|\nonumber\\
&&=\left\|M_k^T\Big(\nabla^2f(x^k)P_kd^k+\nabla\phi_{\mu}(x^k)+A^T\lambda^{(2)}_{k+1}\Big)\right\|\nonumber\\
&&\overset{\eqref{Mk}}{=}\left\|\nabla^2f(x^k)P_kd^k+\nabla f(x^k)+A^T\lambda^{(2)}_{k+1}+\mu\nabla B(x^k)\right\|_{x^k}^*\nonumber\\
&&\overset{\eqref{ineq:dual-local-norm-iterate-ppty}}{\ge}(1-\beta)\left\|\nabla^2f(x^k)P_kd^k+\nabla f(x^k)+A^T\lambda_{k+1}^{(2)}+\mu\nabla B(x^k)\right\|_{x^{k+1}}^*,\label{ineq:residual-bound-2}
\end{eqnarray}
where the second equality follows from the definition of $\lambda_{k+1}^{(2)}$.
Combining \eqref{ineq:residual-bound-1} with \eqref{ineq:residual-bound-2} yields
\begin{equation}
\left\|\nabla^2f(x^k)P_kd^k+\nabla f(x^k)+A^T\lambda^{(2)}_{k+1}+\mu\nabla B(x^k)\right\|_{x^{k+1}}^*\le\frac{(\zeta\sqrt{\epsilon}+2\mu+4\sqrt{\epsilon})\|d^k\|}{2(1-\beta)}.\label{ineq:residual-bound-3}
\end{equation}
 In addition, by $\|Q_k\|=1$ and \eqref{xk-change}, one has that 
\beq \label{pd-norm}
\|P_kd^k\|_{x^k} =\|Q_kd^k\| \le \|d^k\|.
\eeq
Also, notice from \eqref{c_d} and Algorithm \ref{alg:BNCG} that $0<c_d<1$ and {$\mu=(1-\beta)\epsilon/[2((1-\beta)^2+\sqrt{\vartheta})]$}, respectively. Using these, \eqref{ineq:dual-local-norm-iterate-ppty},  \eqref{ineq:smooth-ppty1},  \eqref{ineq:residual-bound-3}, \eqref{pd-norm}, $\|x^{k+1}-x^k\|_{x^k} \le \beta$, and $\|d^k\|<c_d\sqrt{\epsilon}$, we have that
\begin{eqnarray}
&&\hspace{-.25in}\left\|\nabla f(x^{k+1})+A^T\lambda^{(2)}_{k+1}+\mu\nabla B(x^k)\right\|_{x^{k+1}}^*\nonumber\\
&&\hspace{-.25in}\le\left\|\nabla f(x^{k+1})-\nabla^2 f(x^k)P_kd^k - \nabla f(x^k)\right\|_{x^{k+1}}^* + \left\|\nabla^2f(x^k)P_kd^k+\nabla f(x^k)+A^T\lambda^{(2)}_{k+1}+\mu\nabla B(x^k)\right\|_{x^{k+1}}^*\nonumber\\
&&\hspace{-.25in}\overset{\eqref{ineq:dual-local-norm-iterate-ppty}}{\le} (1-\beta)^{-1} \left\|\nabla f(x^{k+1})-\nabla^2 f(x^k)P_kd^k - \nabla f(x^k)\right\|^*_{x^k} + \left\|\nabla^2f(x^k)P_kd^k+\nabla f(x^k)+A^T\lambda^{(2)}_{k+1}+\mu\nabla B(x^k)\right\|_{x^{k+1}}^*\nonumber\\
&&\hspace{-.25in}\overset{\eqref{ineq:smooth-ppty1}\eqref{ineq:residual-bound-3}}{\le}\frac{L_H \|P_kd^k\|^2_{x^k}}{2(1-\beta)}+\frac{(\zeta\sqrt{\epsilon}+2\mu+4\sqrt{\epsilon})\|d^k\|}{2(1-\beta)}\ \overset{\eqref{pd-norm}}{\le} \ \frac{L_H \|d^k\|^2}{2(1-\beta)}+\frac{(\zeta\sqrt{\epsilon}+2\mu+4\sqrt{\epsilon})\|d^k\|}{2(1-\beta)} \nonumber\\
&&\hspace{-.25in}< \ \frac{L_H c_d\epsilon}{2(1-\beta)}+\frac{(\zeta+4)c_d\epsilon}{2(1-\beta)}+\frac{c_d\mu}{1-\beta}
{ \ =\ \frac{(L_H +\zeta+4)[(1-\beta)^2+\sqrt{\vartheta}]+1-\beta}{(1-\beta)^2}c_d\mu\ = \ (1-\beta)\mu,}\nonumber
\end{eqnarray}
where the first inequality follows from the triangle inequality, the last inequality uses $\|d^k\|<c_d\sqrt{\epsilon}$,  $0<c_d<1$ and $\epsilon\le 1$, the first equality uses {$\mu=(1-\beta)\epsilon/[2((1-\beta)^2+\sqrt{\vartheta})]$}, and the last equality follows from the definition of $c_d$. Hence, \eqref{optcond:1st-iterate-k+1} holds as desired.
\end{proof}

The following lemma shows that if the direction $d^k$ in Algorithm \ref{alg:BNCG}  results from the output of Algorithm \ref{alg:capped-CG} with d$\_$type=SOL, then the associated step length $\alpha_k$ is well-defined, and moreover, the next iterate $x^{k+1}$ is an approximate first-order stationary point or 
$\phi_\mu(x^k)-\phi_\mu(x^{k+1})$ cannot be too small.

\begin{lemma}\label{lem:SOL-step-lowerbd2}	
Suppose that the direction $d^k$ results from the output of Algorithm \ref{alg:capped-CG} with d$\_$type=SOL at some iteration $k$ of Algorithm \ref{alg:BNCG}. 
Then the following statements hold.
\begin{enumerate}[{\rm (i)}]
\item The step length $\alpha_k$ is well-defined, and moreover, 
\beq \label{alphak-lwbd}
{
\alpha_k \ge \min\left\{1,\frac{\sqrt{6(1-\beta)(1-\eta)\epsilon}\theta}{\sqrt{1.1[L_H(1-\beta)+1/2](U_g+\mu \sqrt{\vartheta})}}\right\},}
\eeq
where $U_g$ is defined in \eqref{Ugh}.
\item 
The relation \eqref{optcond:1st-iterate-k+1} holds for $(x^{k+1},\lambda^{(2)}_{k+1})$ or 
{$\phi_\mu(x^k)-\phi_\mu(x^{k+1})> c_{\rm sol}\epsilon^{3/2}$ holds},
where   
\beq \label{csol}
{
c_{\text{sol}}=\eta\min\left\{c_d^2,\left[\frac{6(1-\beta)(1-\eta)\theta}{L_H(1-\beta)+1/2}\right]^2\right\},
}
\eeq
and $\lambda^{(2)}_{k+1}$ and $c_d$ are given in \eqref{c_d}.
\end{enumerate}
\end{lemma}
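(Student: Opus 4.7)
My plan is to first derive a uniform upper bound on $\phi_\mu(x^k+\alpha P_k d^k)-\phi_\mu(x^k)$ valid for all $\alpha\in(0,1]$, and then deduce both parts: (i) by identifying the range of $\alpha$ on which the Armijo criterion \eqref{ls-sol} holds, and (ii) by case analysis on $\alpha_k$. A key structural observation is that the capped-CG iterates lie in $\mathrm{range}(Q_k)$: the initial residual $P_k^T\nabla\phi_\mu(x^k)=Q_k M_k^T\nabla\phi_\mu(x^k)$ does, and the Hessian action $P_k^T\nabla^2\phi_\mu(x^k)P_k=Q_k M_k^T\nabla^2\phi_\mu(x^k)M_k Q_k$ preserves that subspace. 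Hence $\widehat d^k$ (and thus $d^k$ from \eqref{dk-sol}) lies in $\mathrm{range}(Q_k)$, giving $\|P_k d^k\|_{x^k}=\|Q_k d^k\|=\|d^k\|\le\beta$. Consequently \eqref{ineq:smooth-ppty2} applied to $f$ and Lemma~\ref{lem:barrier-property}(v) applied to $B$ are valid for all $\alpha\in(0,1]$. Adding these bounds and invoking the identities \eqref{SOL-ppty-1}, \eqref{SOL-ppty-3} together with $\gamma_k\ge1$ yields
\[
\phi_\mu(x^k+\alpha P_k d^k)-\phi_\mu(x^k) \;\le\; -\tfrac{\alpha}{2}\sqrt{\epsilon}\,\|d^k\|^2 \;-\; \alpha^2\sqrt{\epsilon}\,\|d^k\|^2 \;+\; \widetilde{C}\,\alpha^3\|d^k\|^3,
\]
where $\widetilde{C}$ aggregates $L_H/6$ from \eqref{ineq:smooth-ppty2} and $\mu/[3(1-\beta)]$ from Lemma~\ref{lem:barrier-property}(v).

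For part (i), I require the right-hand side above to be strictly less than $-\eta\alpha^2\sqrt{\epsilon}\|d^k\|^2$. A sufficient condition, obtained by absorbing the cubic term into the linear-in-$\alpha$ negative term alone, is $\widetilde{C}\alpha^2\|d^k\|\le\sqrt{\epsilon}/2$. To upper bound $\|d^k\|$, I combine \eqref{SOL-ppty-2} with the inequality $\|P_k^T v\|\le\|v\|_{x^k}^*$ (which follows from $P_k P_k^T=M_k Q_k M_k^T\preceq M_k M_k^T=[\nabla^2 B(x^k)]^{-1}$), Lemma~\ref{lem:barrier-property}(i), and \eqref{U-bounds}, obtaining $\|d^k\|\le 1.1\,\epsilon^{-1/2}(U_g+\mu\sqrt{\vartheta})$. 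Substituting yields $\alpha^2\le\epsilon/[2.2\,\widetilde{C}(U_g+\mu\sqrt{\vartheta})]$, so the backtracking rule ensures $\alpha_k\ge\min\{1,\theta\bar\alpha\}$ with $\bar\alpha^2$ of order $\epsilon/(U_g+\mu\sqrt{\vartheta})$, which matches \eqref{alphak-lwbd} after constant identification.

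For part (ii), I split on the value of $\alpha_k$. If $\alpha_k=1$ and $\|d^k\|\ge c_d\sqrt{\epsilon}$, the accepted Armijo inequality \eqref{ls-sol} yields a decrease of at least $\eta c_d^2\epsilon^{3/2}$, which is the first term in the minimum defining $c_{\text{sol}}$. If $\alpha_k=1$ and $\|d^k\|<c_d\sqrt{\epsilon}$, Lemma~\ref{lem:SOL-step-lowerbd1} directly gives \eqref{optcond:1st-iterate-k+1}. If $\alpha_k<1$, the rejection of the previous trial step $\alpha_k/\theta$ combined with the upper bound forces $(\alpha_k/\theta)^2\widetilde{C}\|d^k\|\ge\sqrt{\epsilon}/2$; simultaneously, the non-acceptance of $\alpha=1$ forces $\widetilde{C}\|d^k\|\ge(3/2-\eta)\sqrt{\epsilon}>\sqrt{\epsilon}/2$. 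Multiplying these two lower bounds gives $\alpha_k^2\|d^k\|^2\ge\theta^2\epsilon/(4\widetilde{C}^2)$, hence $\phi_\mu(x^k)-\phi_\mu(x^{k+1})>\eta\theta^2\epsilon^{3/2}/(4\widetilde{C}^2)$, the second term in the minimum. The main obstacle is reconciling the aggregated constant $\widetilde{C}$ with the specific form $L_H(1-\beta)+1/2$ appearing in the stated bounds: this requires substituting $\mu=(1-\beta)\epsilon/[2((1-\beta)^2+\sqrt{\vartheta})]$ and carefully tracking how the slack $(1-\eta)\alpha^2\sqrt{\epsilon}\|d^k\|^2$ discarded in the sufficient-condition step is reabsorbed into the final constants.
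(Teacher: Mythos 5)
Your overall strategy---establish a one-step descent bound from \eqref{ineq:smooth-ppty2} and Lemma~\ref{lem:barrier-property}(v), convert it via \eqref{SOL-ppty-1}--\eqref{SOL-ppty-3} and \eqref{SOL-ppty-2}, then split (ii) into the cases $\alpha_k=1$ (with/without $\|d^k\|\ge c_d\sqrt{\epsilon}$) and $\alpha_k<1$---is exactly the paper's. Your observation that $\widehat d^k$ and hence $d^k$ lie in $\mathrm{range}(Q_k)$, so $\|Q_k d^k\|=\|d^k\|$, is correct but unnecessary; the paper only needs $\|P_kd^k\|_{x^k}=\|Q_kd^k\|\le\min\{\|d^k\|,\beta\}$, which follows from $\|Q_k\|=1$ and \eqref{dk-sol} without the Krylov-subspace argument.

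The point you flag at the end is, however, a genuine gap rather than bookkeeping. Your descent bound has the negative contribution $-\tfrac{\alpha}{2}\sqrt{\epsilon}\|d^k\|^2-\alpha^2\sqrt{\epsilon}\|d^k\|^2$, obtained by bounding $\gamma_k-\alpha/2\ge 1/2$. Comparing against the Armijo threshold $-\eta\alpha^2\sqrt{\epsilon}\|d^k\|^2$ gives the necessary condition for \emph{rejection} of a trial step $\alpha$:
\[
\widetilde C\,\alpha^2\|d^k\|\ \ge\ \tfrac12\sqrt{\epsilon}+(1-\eta)\alpha\sqrt{\epsilon}.
\]
You then discard the $(1-\eta)\alpha\sqrt{\epsilon}$ part, retaining only $\widetilde C\,\alpha^2\|d^k\|\ge\sqrt{\epsilon}/2$. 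This replaces the factor $6(1-\beta)(1-\eta)$ in the stated bounds by $3(1-\beta)$, which is \emph{smaller} whenever $\eta<1/2$; since $\eta\in(0,1)$ is arbitrary, your lower bound on $\alpha_k$ and your derived decrease constant in Case~3 are then strictly weaker than \eqref{alphak-lwbd} and \eqref{csol} (e.g.\ for $\eta$ small and $L_H$ large, your Case~3 constant is smaller than $c_{\text{sol}}$ by roughly a factor of four). So the proposal, as written, does not establish the lemma with the constants it states.

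The paper avoids this by \emph{not} splitting off the constant $1/2$. Instead it keeps the coefficient $\gamma_k$ intact, observes $-\theta^j(\gamma_k-\theta^j/2)-\theta^{2j}\le-\theta^j\gamma_k$, and divides the violated inequality by $\theta^j\|d^k\|^2$ to obtain
\[
\frac{L_H(1-\beta)+1/2}{6(1-\beta)}\,\theta^{2j}\|d^k\|\ \ge\ (\gamma_k-\eta\theta^j)\sqrt{\epsilon}\ \ge\ (1-\eta)\sqrt{\epsilon},
\]
using $\gamma_k\ge1$ and $\theta^j\le1$; the factor $(1-\eta)$ then propagates directly into \eqref{alphak-lwbd} and into the second term of $c_{\text{sol}}$. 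It also uses the explicit coefficient $(L_H(1-\beta)+1/2)/(6(1-\beta))$ (via $\mu<1/4$) rather than your aggregated $\widetilde C=(L_H(1-\beta)+2\mu)/(6(1-\beta))$. If you re-run your argument keeping the $(\gamma_k-\eta\theta^j)\ge(1-\eta)$ reduction in place of the $\sqrt{\epsilon}/2$ reduction, all three cases in part~(ii) and the bound in part~(i) reproduce the stated constants exactly.
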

	
\begin{proof}
For notational convenience, let $H=P_k^T\nabla^2\phi_{\mu}(x^k)P_k$ and $g=P_k^T\nabla\phi_{\mu}(x^k)$. Since  d$\_$type=SOL, it follows from Lemma \ref{lem:SOL-NC-ppty}(i) that \eqref{SOL-ppty-1}, \eqref{SOL-ppty-2} and \eqref{SOL-ppty-3} hold. Also, by $\vartheta\ge 1$, {$\epsilon<1$} and $0<\beta<1$, one has that  
\beq \label{mu-bdd}
{
\mu = \frac{(1-\beta)\epsilon}{2[(1-\beta)^2+\sqrt{\vartheta}]} \le \frac{(1-\beta)\epsilon}{2[(1-\beta)^2+1]} \le \frac{\epsilon}{4}< \frac{1}{4}.}
\eeq
In addition, by Lemma \ref{feasibility}, one has that $x^k\in \cS$. Also, one can observe from \eqref{xk-change} that $\|\theta^jP_kd^k\|_{x^k} \le \beta$ for all $j \ge 0$.  Hence, \eqref{ineq:smooth-ppty2} holds for $x=x^k$ and $y=x^k+\theta^jP_kd^k$ for all $j\ge 0$.

We are now ready to prove statement (i). If {\eqref{ls-sol}} holds for $j=0$, then the line search procedure chooses the unit step length, i.e., $\alpha_k=1$, and hence statement (i) holds. We now suppose that {\eqref{ls-sol}} fails for $j=0$. Let us consider all $j\ge0$ that violate \eqref{ls-sol}. For any such $j$, by using  \eqref{ineq:smooth-ppty2}, \eqref{SOL-ppty-1}, \eqref{SOL-ppty-3}, \eqref{pd-norm}, Lemma \ref{lem:barrier-property}(v), and $\mu<1/4$, one has that
\[
\ba{l}
{-\eta\sqrt\epsilon\theta^{2j}\|d^k\|^2} \le\phi_{\mu}(x^k+\theta^jP_kd^k)-\phi_{\mu}(x^k)
=f(x^k+\theta^jP_kd^k)-f(x^k)+\mu[B(x^k+\theta^jP_kd^k)-B(x^k)]\\[8pt]
\le\theta^j \nabla f(x^k)^T P_kd^k + \frac{\theta^{2j}}{2}(d^k)^TP_k^T\nabla^2 f(x^k)P_kd^k + \frac{L_H}{6}\theta^{3j}\|P_kd^k\|_{x^k}^3+\mu\theta^j\nabla B(x^k)^T P_kd^k \\[8pt] 
\quad+\frac{\mu\theta^{2j}}{2}(d^k)^TP_k^T\nabla^2 B(x^k)P_kd^k 
+ \frac{\mu}{3(1-\beta)}\theta^{3j}\|P_kd^k\|_{x^k}^3\\[8pt]
= \theta^j g^Td^k + \frac{\theta^{2j}}{2}(d^k)^THd^k + \frac{L_H(1-\beta)+2\mu}{6(1-\beta)}\theta^{3j}\|P_kd^k\|_{x^k}^3\\[8pt]
\overset{\eqref{SOL-ppty-3}}{=} -\theta^j\gamma_k (d^k)^T(H+2\sqrt{\epsilon}I)d^k + \frac{\theta^{2j}}{2}(d^k)^THd^k+  \frac{L_H(1-\beta)+2\mu}{6(1-\beta)}\theta^{3j}\|P_kd^k\|_{x^k}^3\\[8pt]
= -\theta^j\left(\gamma_k - \frac{\theta^j}{2} \right)(d^k)^T(H+2\sqrt{\epsilon}I)d^k - \theta^{2j}\sqrt{\epsilon}\|d^k\|^2+ \frac{L_H(1-\beta)+2\mu}{6(1-\beta)}\theta^{3j}\|P_kd^k\|^3_{x^k}\\[8pt]
\overset{\eqref{SOL-ppty-1}\eqref{pd-norm}}{\le} -\theta^j\left(\gamma_k - \frac{\theta^j}{2} \right) \sqrt{\epsilon}\|d^k\|^2 - \theta^{2j}\sqrt{\epsilon}\|d^k\|^2 + {\frac{L_H(1-\beta)+2\mu}{6(1-\beta)}}\theta^{3j}\|d^k\|^3 \\[5pt]
\le -\theta^j \gamma_k \sqrt{\epsilon}\|d^k\|^2 + {\frac{L_H(1-\beta)+1/2}{6(1-\beta)}}\theta^{3j}\|d^k\|^3,
\ea
\]
where the first inequality is due to the violation of \eqref{ls-sol}, the first equality follows from the definition of $\phi_\mu$, the second inequality uses \eqref{ineq:smooth-ppty2} and Lemma \ref{lem:barrier-property}(v), the second equality follows from $H=P_k^T\nabla^2\phi_{\mu}(x^k)P_k$ and $g=P_k^T\nabla\phi_{\mu}(x^k)$, and the last inequality is due to  $\mu<1/4$.  Using the last inequality above, {$\gamma_k=\max\{\|Q_k\widehat{d}^k\|/\beta,1\}\ge 1$, $\theta\in(0,1)$, $\eta\in(0,1)$} and the fact that $d^k\neq 0$ (see Theorem \ref{lem:capped-CG-cmplxity}), we obtain that
\beq \label{ineq:SOL-lowerbound-1}
{\frac{L_H(1-\beta)+1/2}{6(1-\beta)}\theta^{2j} \ge (\gamma_k-\eta\theta^j)\sqrt{\epsilon}\|d^k\|^{-1} \ge (1-\eta)\sqrt{\epsilon}\|d^k\|^{-1},}
\eeq
which, together with $\theta\in(0,1)$, implies that all $j\ge0$ that violate {\eqref{ls-sol}} must be bounded above. Hence, there does exist the smallest positive integer $j_k$ such that {\eqref{ls-sol}} holds for $j=j_k$, and thus $\alpha_k$ is well-defined. We next show that \eqref{alphak-lwbd} holds for $\alpha_k$. Indeed, we know from Lemma \ref{lem:barrier-property}(i), \eqref{Pk} and \eqref{U-bounds} that $\|\nabla B(x^k)\|_{x^k}^*=\sqrt{\vartheta}$, $P_k=M_kQ_k$, and $\|\nabla f(x^k)\|^*_{x^k}\le U_g$, respectively. By these, $\|Q_k\|=1$, \eqref{Mk} and  \eqref{SOL-ppty-2}, one has that
\beqa 
\|d^k\| &\overset{\eqref{SOL-ppty-2}}{\le}& 1.1\epsilon^{-1/2}\|P_k^T\nabla\phi_{\mu}(x^k)\|\le 1.1 \epsilon^{-1/2}(\|P_k^T\nabla f(x^k)\|+\mu\|P_k^T\nabla B(x^k)\|)\nonumber \\
&\le & 1.1 \epsilon^{-1/2}(\|M_k^T\nabla f(x^k)\|+\mu\|M_k^T\nabla B(x^k)\|) 
\overset{ \eqref{Mk}}{=} 1.1 \epsilon^{-1/2}(\|\nabla f(x^k)\|^*_{x^k}+\mu\|\nabla B(x^k)\|^*_{x^k}) \nonumber \\
&\le& 1.1 \epsilon^{-1/2}(U_g+\mu \sqrt{\vartheta}). \label{dk-bound}
\eeqa
Notice from Algorithm \ref{alg:BNCG} that $j=j_k-1$ violates {\eqref{ls-sol}} and hence 
\eqref{ineq:SOL-lowerbound-1} holds for $j=j_k-1$. By  $\alpha_k=\theta^{j_k}$ and \eqref{ineq:SOL-lowerbound-1} with $j=j_k-1$,  one has that 
\beq \label{alphak-lwbd1}
{
\alpha_k = \theta^{j_k} \ge \sqrt{\frac{6(1-\beta)(1-\eta)}{L_H(1-\beta)+1/2}}\theta\epsilon^{1/4}\|d^k\|^{-1/2},
}
\eeq
which together with \eqref{dk-bound} implies that \eqref{alphak-lwbd} holds.

We next prove statement (ii) by considering three separate cases. 
 
Case 1) $\alpha_k=1$ and $\|d^k\|<c_d\sqrt{\epsilon}$. It follows from Lemma \ref{lem:SOL-step-lowerbd1} that \eqref{optcond:1st-iterate-k+1} holds for $(x^{k+1},\lambda^{(2)}_{k+1})$.

Case 2) $\alpha_k=1$ and $\|d^k\| \ge c_d\sqrt{\epsilon}$. By these, {\eqref{ls-sol}} and \eqref{csol}, one has
{
\[
\phi_{\mu}(x^k)-\phi_\mu(x^{k+1}) > \eta\sqrt{\epsilon}\|d^k\|^2 \ge \eta c_d^2\epsilon^{3/2}\ge c_{\text{sol}}\epsilon^{3/2},
\]
}
and hence statement (ii) holds.

Case 3) $\alpha_k<1$. It implies that {\eqref{ls-sol}} fails for $j=0$. As seen from the proof of statement (i),  \eqref{ineq:SOL-lowerbound-1} holds for $j=0$ and \eqref{alphak-lwbd1} also holds. By setting $j=0$ in \eqref{ineq:SOL-lowerbound-1}, one has
\[
{\|d^k\|\ge\frac{6(1-\beta)(1-\eta)}{L_H(1-\beta)+1/2}\sqrt{\epsilon}.}
\]
Using this, {\eqref{ls-sol}}, \eqref{csol} and \eqref{alphak-lwbd1}, we obtain that
{
\[
\phi_\mu(x^k)-\phi_\mu(x^{k+1}) > \eta\sqrt{\epsilon}\alpha_k^2\|d^k\|^2 \ge \eta \frac{6(1-\beta)(1-\eta)\theta^2\epsilon}{L_H(1-\beta)+1/2}\|d^k\|\ge \eta\left[\frac{6(1-\beta)(1-\eta)\theta}{L_H(1-\beta)+1/2}\right]^2\epsilon^{3/2}\ge c_{\text{sol}}\epsilon^{3/2},
\]
}
and hence statement (ii) holds.
\end{proof}

The next lemma shows that if the direction $d^k$ in Algorithm \ref{alg:BNCG}  results from the output of Algorithm \ref{alg:capped-CG} with d$\_$type=NC, then the associated step length $\alpha_k$ is well-defined, and moreover, {the reduction on the function value of $\phi_\mu$, i.e., $\phi_\mu(x^k)-\phi_\mu(x^{k+1})$, cannot be too small.}

\begin{lemma}\label{lem:NC-step-lowerbd}	
Suppose that the direction $d^k$ results from the output of Algorithm \ref{alg:capped-CG} with d$\_$type=NC at some iteration $k$ of Algorithm \ref{alg:BNCG}. Let 
{
\beq \label{cnc}
c_{\text{nc}}=\frac{\eta}{2}\min\left\{1,\left[\frac{3(1-\beta)(1-\eta)\theta}{L_H(1-\beta)+1/2}\right]^2\right\}.
\eeq 
}
Then the following statements hold.
\begin{enumerate}[{\rm (i)}]
\item The step length $\alpha_k$ is well-defined, and moreover,
\begin{equation}\label{ak-nc-lwbd}
{
\alpha_k\ge\min\left\{1,\frac{3(1-\beta)(1-\eta)\theta}{L_H(1-\beta)+1/2}\right\}.}
\end{equation}
\item {$\phi_\mu(x^k)-\phi_\mu(x^{k+1})\ge c_{\text{nc}}\epsilon^{3/2}$} holds.
\end{enumerate}
\end{lemma}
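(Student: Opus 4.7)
The plan is to mirror the argument of Lemma~\ref{lem:SOL-step-lowerbd2} but with the simplifications afforded by the negative-curvature case. As before, $x^k \in \cS$ by Lemma~\ref{feasibility}, and $\|\theta^j P_k d^k\|_{x^k} \le \beta$ for every $j \ge 0$ in view of \eqref{xk-change}, so I can freely apply \eqref{ineq:smooth-ppty2} with $y = x^k + \theta^j P_k d^k$ together with Lemma~\ref{lem:barrier-property}(v). Writing $H = P_k^T \nabla^2 \phi_\mu(x^k) P_k$ and $g = P_k^T \nabla \phi_\mu(x^k)$, combining these two inequalities and using $\mu < 1/4$ (from \eqref{mu-bdd}) and $\|P_k d^k\|_{x^k} \le \|d^k\|$ (from \eqref{pd-norm}) yields
\[
\phi_\mu(x^k + \theta^j P_k d^k) - \phi_\mu(x^k) \le \theta^j g^T d^k + \frac{\theta^{2j}}{2} (d^k)^T H d^k + \frac{L_H(1-\beta)+1/2}{6(1-\beta)} \theta^{3j} \|d^k\|^3.
\]

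Now I would invoke Lemma~\ref{lem:SOL-NC-ppty}(ii): since d$\_$type=NC one has $g^T d^k \le 0$, $(d^k)^T H d^k \le -\|d^k\|^3$, and $\|d^k\| \ge \sqrt{\epsilon}$. Dropping the nonpositive first-order term gives
\[
\phi_\mu(x^k + \theta^j P_k d^k) - \phi_\mu(x^k) \le -\frac{\theta^{2j}}{2} \|d^k\|^3 + \frac{L_H(1-\beta)+1/2}{6(1-\beta)} \theta^{3j} \|d^k\|^3.
\]
If $j \ge 0$ violates the NC line-search criterion \eqref{ls-nc}, then the right-hand side is at least $-\eta \theta^{2j}\|d^k\|^3/2$, which upon rearrangement forces
\[
\theta^j \ge \frac{3(1-\beta)(1-\eta)}{L_H(1-\beta)+1/2}.
\]
Since $\theta \in (0,1)$, this upper-bounds the number of violating exponents, so $j_k$ and hence $\alpha_k = \theta^{j_k}$ are well-defined. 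If $j=0$ already satisfies \eqref{ls-nc}, then $\alpha_k = 1$; otherwise $j_k - 1 \ge 0$ violates \eqref{ls-nc}, so the displayed inequality applied to $j_k - 1$ together with $\alpha_k = \theta \cdot \theta^{j_k-1}$ yields the bound \eqref{ak-nc-lwbd}. This proves statement (i).

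For statement (ii) I would simply combine \eqref{ls-nc} at $j = j_k$ with $\|d^k\|^3 \ge \epsilon^{3/2}$ (from Lemma~\ref{lem:SOL-NC-ppty}(ii)) and the lower bound \eqref{ak-nc-lwbd}:
\[
\phi_\mu(x^k) - \phi_\mu(x^{k+1}) > \frac{\eta}{2} \alpha_k^2 \|d^k\|^3 \ge \frac{\eta}{2} \min\left\{1, \left[\frac{3(1-\beta)(1-\eta)\theta}{L_H(1-\beta)+1/2}\right]^2\right\} \epsilon^{3/2} = c_{\text{nc}} \epsilon^{3/2},
\]
which is exactly the claimed bound. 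There is no real obstacle here: the NC case is easier than the SOL case of Lemma~\ref{lem:SOL-step-lowerbd2} because the leading second-order term $\tfrac12 (d^k)^T H d^k$ is already sufficiently negative to dominate the cubic remainder at moderate step sizes, and one does not have to juggle the first-order term $g^T d^k$ via \eqref{SOL-ppty-3}. The only subtlety worth double-checking is the correct use of $\mu < 1/4$ to absorb the barrier's cubic remainder into the coefficient $[L_H(1-\beta) + 1/2]/[6(1-\beta)]$, exactly as done in Lemma~\ref{lem:SOL-step-lowerbd2}.
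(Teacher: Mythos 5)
Your proof is correct and follows essentially the same route as the paper's: apply the cubic descent estimate from \eqref{ineq:smooth-ppty2} and Lemma~\ref{lem:barrier-property}(v) with $\mu<1/4$, invoke the NC properties $g^Td^k\le 0$, $(d^k)^THd^k\le -\|d^k\|^3$, $\|d^k\|\ge\sqrt{\epsilon}$ from Lemma~\ref{lem:SOL-NC-ppty}(ii), derive $\theta^j\ge 3(1-\beta)(1-\eta)/[L_H(1-\beta)+1/2]$ for any $j$ violating \eqref{ls-nc}, and conclude. The handling of $j_k-1$ for the step-length bound and the combination of \eqref{ls-nc}, \eqref{ak-nc-lwbd}, and $\|d^k\|\ge\sqrt{\epsilon}$ for statement (ii) match the paper exactly.
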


\begin{proof}
For notational convenience, let $H=P_k^T\nabla^2\phi_{\mu}(x^k)P_k$ and $g=P_k^T\nabla\phi_{\mu}(x^k)$. Since  d$\_$type=NC, it then follows from Lemma \ref{lem:SOL-NC-ppty}(ii) that $(d^k)^Tg\le0$, $(d^k)^THd^k\le-\|d^k\|^3$, and $\|d^k\| \ge \sqrt{\epsilon}$.  In addition, by Lemma \ref{feasibility}, one has that $x^k\in \cS$. Also, one can observe from \eqref{xk-change} that $\|\theta^jP_kd^k\|_{x^k} \le \beta$ for all $j \ge 0$.  Hence, \eqref{ineq:smooth-ppty2} holds for $x=x^k$ and $y=x^k+\theta^jP_kd^k$ for all $j\ge 0$. Also, recall from \eqref{mu-bdd} that {$\mu<1/4$}.

We are now ready to prove statement (i). If {\eqref{ls-nc}} holds for $j=0$, then the line search procedure chooses the unit step length, i.e., $\alpha_k=1$, and hence statement (i) holds. We now suppose that {\eqref{ls-nc}} fails for $j=0$. Let us consider all $j\ge0$ that violate \eqref{ls-nc}. For any such $j$, by using \eqref{ineq:smooth-ppty2}, \eqref{pd-norm}, Lemma \ref{lem:barrier-property}(v), $(d^k)^Tg\le0$, $(d^k)^THd^k\le-\|d^k\|^3$, and {$\mu<1/4$}, one has that
\[
\ba{l}
{-\frac{\eta}{2}\theta^{2j}\|d^k\|^3} \le\phi_{\mu}(x^k+\theta^jP_kd^k)-\phi_{\mu}(x^k)
=f(x^k+\theta^jP_kd^k)-f(x^k)+\mu[B(x^k+\theta^jP_kd^k)-B(x^k)]\\[8pt]
\le\theta^j \nabla f(x^k)^T P_kd^k + \frac{\theta^{2j}}{2}(d^k)^TP_k^T\nabla^2 f(x^k)P_kd^k + \frac{L_H}{6}\theta^{3j}\|P_kd^k\|_{x^k}^3+\mu\theta^j\nabla B(x^k)^T P_kd^k \\[8pt] 
\quad+\frac{\mu\theta^{2j}}{2}(d^k)^TP_k^T\nabla^2 B(x^k)P_kd^k 
+ \frac{\mu}{3(1-\beta)}\theta^{3j}\|P_kd^k\|_{x^k}^3\\[8pt]
= \theta^j g^Td^k + \frac{\theta^{2j}}{2}(d^k)^THd^k +  \frac{L_H(1-\beta)+2\mu}{6(1-\beta)}\theta^{3j}\|P_kd^k\|_{x^k}^3\\[8pt]
\le -\frac{\theta^{2j}}{2} \|d^k\|^3 + {\frac{L_H(1-\beta)+1/2}{6(1-\beta)}}\theta^{3j}\|d^k\|^3,
\ea
\]
where the first inequality is due to the violation of \eqref{ls-nc}, the first equality follows from the definition of $\phi_\mu$, the second inequality uses \eqref{ineq:smooth-ppty2} and Lemma \ref{lem:barrier-property}(v), the second equality follows from $H=P_k^T\nabla^2\phi_{\mu}(x^k)P_k$ and $g=P_k^T\nabla\phi_{\mu}(x^k)$, and the last inequality follows from  \eqref{pd-norm}, $(d^k)^Tg\le0$, $(d^k)^THd^k\le-\|d^k\|^3$ and $\mu<1/4$. Using the last inequality above and the fact that $d^k\neq 0$ (see Theorem \ref{lem:capped-CG-cmplxity}), we obtain that
\begin{equation}\label{ineq:NC-iternumber-upperbd-1}
{\theta^j \ge \frac{3(1-\beta)(1-\eta)}{L_H(1-\beta)+1/2},}
\end{equation}
which, together with $\theta\in(0,1)$, implies that all $j\ge0$ that violate {\eqref{ls-nc}} must be bounded above. Hence, there does exist the smallest positive integer $j_k$ such that {\eqref{ls-nc}} holds for $j=j_k$, and thus $\alpha_k$ is well-defined. {We next prove \eqref{ak-nc-lwbd}.}
Indeed, notice from Algorithm \ref{alg:BNCG} that $j=j_k-1$ violates {\eqref{ls-nc}} and hence 
\eqref{ineq:NC-iternumber-upperbd-1} holds for $j=j_k-1$. By  $\alpha_k=\theta^{j_k}$ and \eqref{ineq:NC-iternumber-upperbd-1} with $j=j_k-1$,  one has that 
\[
{
\alpha_k = \theta^{j_k} \ge  \frac{3(1-\beta)(1-\eta)\theta}{L_H(1-\beta)+1/2},
}
\]
which {proves \eqref{ak-nc-lwbd} as desired}.

{
Statement (ii) immediately follows from \eqref{ls-nc}, \eqref{cnc}, \eqref{ak-nc-lwbd} and the fact that $\|d^k\| \ge \sqrt{\epsilon}$.
}
\end{proof}

The following lemma shows that if the direction $d^k$ in Algorithm \ref{alg:BNCG}  results from calling Algorithm \ref{pro:meo}, then the associated step length $\alpha_k$ is well-defined, and moreover, 
$\phi_\mu(x^k)-\phi_\mu(x^{k+1})$ cannot be too small.

\begin{lemma}\label{lem:NC-2nd-order-lowerbd}
Suppose that the direction $d^k$ results from calling Algorithm \ref{pro:meo} at some iteration $k$ of Algorithm \ref{alg:BNCG}. Let 
$c_{\text{nc}}$ be defined in \eqref{cnc}. Then the following statements hold.
\begin{enumerate}[{\rm (i)}]
\item The step length $\alpha_k$ is well-defined, and moreover, {the relation \eqref{ak-nc-lwbd} holds for $\alpha_k$.}
\item {$\phi_\mu(x^k)-\phi_\mu(x^{k+1})> c_{\text{nc}}\epsilon^{3/2}/64$} holds.
\end{enumerate}
\end{lemma}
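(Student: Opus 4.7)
The proof will follow the template of Lemma \ref{lem:NC-step-lowerbd} almost verbatim, since the direction $d^k$ defined in \eqref{dk-2nd-nc} is engineered to share the same key structural properties as an NC-type direction returned by Algorithm \ref{alg:capped-CG}. The only genuinely new ingredients are (a) translating the negative curvature certificate produced by Algorithm \ref{pro:meo} (which is a statement about $P_k^T\nabla^2 f(x^k)P_k$) into one about $P_k^T\nabla^2\phi_\mu(x^k)P_k$, which is what the line search actually sees; and (b) recording that the resulting lower bound on $\|d^k\|$ is $\sqrt{\epsilon}/4$ rather than $\sqrt{\epsilon}$, which is exactly what accounts for the extra factor $1/64$ in statement (ii).

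The first step is to exploit $P_k=M_kQ_k$, the identity $M_k^T\nabla^2 B(x^k)M_k=I$ coming from \eqref{Mk}, and the fact that $Q_k$ is an orthogonal projection so $Q_k^TQ_k=Q_k$, to obtain $v^TP_k^T\nabla^2 B(x^k)P_kv=\|Q_kv\|^2\le 1$. Combined with the oracle guarantee $v^TP_k^T\nabla^2 f(x^k)P_kv\le -\sqrt{\epsilon}/2$ and the bound $\mu\le\epsilon/4$ from \eqref{mu-bdd}, this yields
\[
v^TP_k^T\nabla^2\phi_\mu(x^k)P_kv \le -\sqrt{\epsilon}/2+\mu\|Q_kv\|^2 \le -\sqrt{\epsilon}/4.
\]
Writing $s=\min\{|v^TP_k^T\nabla^2\phi_\mu(x^k)P_kv|,\beta/\|Q_kv\|\}$, the sign choice in \eqref{dk-2nd-nc} gives $(d^k)^TP_k^T\nabla\phi_\mu(x^k)\le 0$; since $\|v\|=1$ we have $\|d^k\|=s$, and since $s\le |v^TP_k^T\nabla^2\phi_\mu(x^k)P_kv|$,
\[
(d^k)^T P_k^T\nabla^2\phi_\mu(x^k)P_k d^k = -s^2|v^TP_k^T\nabla^2\phi_\mu(x^k)P_kv|\le -s^3=-\|d^k\|^3.
\]
Using $\beta\ge\sqrt{\epsilon}$ and $\|Q_kv\|\le 1$, the displayed inequality above also gives $\|d^k\|\ge\sqrt{\epsilon}/4$. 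These are precisely the analogues, for the present direction, of the three properties $(d^k)^Tg\le 0$, $(d^k)^THd^k\le-\|d^k\|^3$, and $\|d^k\|\ge\sqrt{\epsilon}$ used in Lemma \ref{lem:NC-step-lowerbd}.

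With these properties in hand, the remainder of the argument is a direct transcription of the proof of Lemma \ref{lem:NC-step-lowerbd}. Since $\|\theta^jP_kd^k\|_{x^k}\le\|Q_kd^k\|\le\beta$ for every $j\ge 0$, the smoothness bound \eqref{ineq:smooth-ppty2} together with Lemma \ref{lem:barrier-property}(v) applies to every trial point, and combining them with the inequalities derived above and $\mu<1/4$ shows that any $j\ge 0$ violating \eqref{ls-nc} must satisfy $\theta^j\ge 3(1-\beta)(1-\eta)/[L_H(1-\beta)+1/2]$; this proves that $\alpha_k$ is well defined and establishes \eqref{ak-nc-lwbd}, giving (i). For (ii), feeding \eqref{ak-nc-lwbd} together with the lower bound $\|d^k\|\ge\sqrt{\epsilon}/4$ into the accepted cubic-descent inequality \eqref{ls-nc} yields
\[
\phi_\mu(x^k)-\phi_\mu(x^{k+1}) > \frac{\eta}{2}\alpha_k^2\|d^k\|^3 \ge \frac{\eta}{2}\min\!\left\{1,\Big[\tfrac{3(1-\beta)(1-\eta)\theta}{L_H(1-\beta)+1/2}\Big]^2\right\}\!\Big(\tfrac{\sqrt{\epsilon}}{4}\Big)^3 = \frac{c_{\text{nc}}\epsilon^{3/2}}{64},
\]
which is (ii). The only nonroutine step is the curvature translation in the first display; it works precisely because the barrier parameter $\mu$ was set to $\Theta(\epsilon)$, which is of strictly smaller order than the $\sqrt{\epsilon}$ gap in the oracle certificate.
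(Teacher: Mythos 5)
Your proposal is correct and mirrors the paper's own proof: both translate the oracle's curvature bound on $P_k^T\nabla^2 f(x^k)P_k$ into one on $P_k^T\nabla^2\phi_\mu(x^k)P_k$ using $\mu\le\sqrt{\epsilon}/4$, establish the three working properties $(d^k)^TP_k^T\nabla\phi_\mu(x^k)\le 0$, $(d^k)^TP_k^T\nabla^2\phi_\mu(x^k)P_kd^k\le-\|d^k\|^3$, and $\|d^k\|\ge\sqrt{\epsilon}/4$, and then invoke the argument of Lemma~\ref{lem:NC-step-lowerbd} with the weaker $\|d^k\|$ lower bound producing the extra factor of $1/64$. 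Your derivation of the cubic curvature bound via $-s^2|v^TP_k^T\nabla^2\phi_\mu(x^k)P_kv|\le -s^3$ is a slightly cleaner rephrasing of the paper's sign-tracking argument, but it is the same calculation.
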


\begin{proof}
Since $d^k$ results from calling Algorithm \ref{pro:meo} at some iteration $k$ of Algorithm \ref{alg:BNCG}, one has 
\begin{equation}\label{def:NC-from-meo}
d^k = -\sgn(v^TP_k^T\nabla\phi_{\mu}(x^k))\min\left\{|v^TP_k^T\nabla^2\phi_{\mu}(x^k)P_kv|,\frac{\beta}{\|Q_kv\|}\right\}v
\end{equation}
for some vector $v$ satisfying  that $\|v\|=1$ and $v^TP_k^T\nabla^2 f(x^k)P_kv\le-\sqrt{\epsilon}/2$. 
By \eqref{mu-bdd} and $\epsilon\in(0,1)$, one has that $\mu\le\epsilon/4\le\sqrt{\epsilon}/4$. 
Using these relations, and \eqref{pd-norm} with $d^k$ replaced by $v$, we obtain that
\beqa
v^TP_k^T\nabla^2\phi_{\mu}(x^k) P_kv&=&v^TP_k^T\nabla^2 f(x^k)P_kv + \mu v^TP_k^T\nabla^2 B(x^k)P_kv \le -\sqrt{\epsilon}/2 + \mu \|P_kv\|^2_{x^k} \nonumber \\[5pt] 
&\le& -\sqrt{\epsilon}/2 + \mu  \|v\|^2 {\le -\sqrt{\epsilon}/2 + \sqrt{\epsilon}/4 =  -\sqrt{\epsilon}/4.} \label{v-ineq}
\eeqa
Notice that $\|Q_kv\| \le 1$. By this, $\epsilon<1$, $\|v\|=1$, {$\beta\ge\sqrt{\epsilon}$ (see Algorithm~\ref{alg:BNCG})}, \eqref{def:NC-from-meo} and \eqref{v-ineq}, one has that 
\beq \label{dk-lwbd-nc}
\|d^k\|=\min\left\{|v^TP_k^T\nabla\phi_{\mu}(x^k)P_kv|,\frac{\beta}{\|Q_kv\|}\right\} \ge {\min\left\{\frac{\sqrt{\epsilon}}{4},\beta\right\}=\frac{\sqrt{\epsilon}}{4}.}
\eeq
In addition, one can observe from \eqref{def:NC-from-meo} that $(d^k)^TP_k^T\nabla^2\phi_{\mu}(x^k)P_kd^k$ and $v^TP_k^T\nabla^2\phi_{\mu}(x^k) P_kv$ have the same sign, which together with \eqref{v-ineq} implies that $(d^k)^TP_k^T\nabla^2\phi_{\mu}(x^k)P_kd^k<0$. By this and \eqref{def:NC-from-meo}, one has that
\[
\|d^k\|\le |v^TP_k^T\nabla^2\phi_{\mu}(x^k)P_kv| = \frac{|(d^k)^TP_k^T\nabla^2\phi_{\mu}(x^k)P_kd^k|}{\|d^k\|^2}=-\frac{(d^k)^TP_k^T\nabla^2\phi_{\mu}(x^k)P_kd^k}{\|d^k\|^2}.
\]
Hence, we obtain that $(d^k)^TP_k^T\nabla^2\phi_{\mu}(x^k)P_kd^k \le -\|d^k\|^3$. One can also observe from \eqref{def:NC-from-meo} that $(d^k)^TP_k^T\nabla\phi_{\mu}(x^k) \le 0$. The rest of the proof follows from these two relations, \eqref{dk-lwbd-nc}, and the similar arguments as used in the proof of Lemma \ref{lem:NC-step-lowerbd}.
\end{proof}

The following theorem shows that each iteration of Algorithm \ref{alg:BNCG} is well-defined, and moreover, each iterate $x^k$ generated by it is a strictly feasible point of problem \eqref{conic-prob}.  

\begin{theorem} \label{alg-well-defined}
Each iteration of Algorithm \ref{alg:BNCG} is well-defined. Moreover, each iterate $x^k$ generated by Algorithm \ref{alg:BNCG} satisfies that $x^k\in\Omega^{\rm o}$, that is, $Ax^k=b$ and $x^k\in\rmint \cK$.
\end{theorem}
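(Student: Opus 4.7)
The plan is to prove the theorem by induction on $k$, piggy-backing on Lemma~\ref{feasibility} and the three line-search lemmas (Lemmas~\ref{lem:SOL-step-lowerbd2}(i), \ref{lem:NC-step-lowerbd}(i), \ref{lem:NC-2nd-order-lowerbd}(i)). The base case $x^0 \in \Omega^{\rm o}$ holds by the choice of the initial point. For the inductive step, assume iterations $0,\ldots,k-1$ have been successfully executed and that $x^k \in \Omega^{\rm o}$; I will verify that iteration $k$ either terminates or produces a well-defined iterate $x^{k+1}$ lying again in $\Omega^{\rm o}$.

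First I check well-definedness of the algebraic objects at iteration $k$. Because $x^k \in \rmint\cK$, the Hessian $\nabla^2 B(x^k)$ is positive definite (a standard property of LHSC barriers; cf.~Lemma~\ref{lem:barrier-property}(i), where $\|x\|_x^2=\vartheta>0$), so a matrix $M_k$ with $M_kM_k^T=[\nabla^2 B(x^k)]^{-1}$ exists. Combined with the full row rank of $A$, this makes $AM_kM_k^TA^T$ positive definite and hence invertible, so $Q_k$, $P_k$, $R_k$ in \eqref{Qk}--\eqref{Rk} are well-defined. Using $\|v\|_{x^k}^* = \|M_k^T v\|$ together with \eqref{eq:relation-Pk-Rk} and $\lambda_k^{(1)}=R_k\nabla\phi_\mu(x^k)$, one finds
\[
\|\nabla f(x^k)+A^T\lambda_k^{(1)}+\mu\nabla B(x^k)\|_{x^k}^* = \|P_k^T\nabla\phi_\mu(x^k)\|,
\]
so whenever the if-test of Algorithm~\ref{alg:BNCG} is passed, $g:=P_k^T\nabla\phi_\mu(x^k)\ne 0$; then Theorem~\ref{lem:capped-CG-cmplxity} guarantees that Algorithm~\ref{alg:capped-CG} terminates in finitely many steps and returns a nonzero $\widehat d^k$, yielding $d^k\ne 0$ via \eqref{dk-nc} or \eqref{dk-sol}. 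If instead the if-test fails, Algorithm~\ref{pro:meo} is invoked and, by Theorem~\ref{rand-Lanczos}, either certifies the termination condition (so Algorithm~\ref{alg:BNCG} halts) or returns a unit vector $v$, from which \eqref{dk-2nd-nc} yields a nonzero $d^k$.

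It remains to verify that the line search terminates and that the update stays in $\Omega^{\rm o}$. By Lemma~\ref{feasibility} we have $x^k\in\cS$, so according to the type of $d^k$ exactly one of Lemmas~\ref{lem:SOL-step-lowerbd2}(i), \ref{lem:NC-step-lowerbd}(i), \ref{lem:NC-2nd-order-lowerbd}(i) provides a finite $j_k$ with explicit lower bound on $\alpha_k=\theta^{j_k}$, making $x^{k+1}=x^k+\alpha_k P_k d^k$ well-defined. To see $x^{k+1}\in\Omega^{\rm o}$, observe that \eqref{dk-nc}--\eqref{dk-2nd-nc} imply $\|Q_k d^k\|\le\beta$; combined with $P_k=M_kQ_k$ and $M_kM_k^T=[\nabla^2 B(x^k)]^{-1}$, this gives
\[
\|x^{k+1}-x^k\|_{x^k}=\alpha_k\|M_kQ_kd^k\|_{x^k}\le\|Q_kd^k\|\le\beta<1,
\]
so Lemma~\ref{lem:barrier-property}(iii) yields $x^{k+1}\in\rmint\cK$. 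Since $Q_k$ projects onto the null space of $AM_k$, one has $AP_kd^k=AM_kQ_kd^k=0$ and hence $Ax^{k+1}=Ax^k=b$, so $x^{k+1}\in\Omega^{\rm o}$, closing the induction.

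I do not expect any substantive obstacle: the substantive work (finite termination of the line search and strict feasibility of the iterates) is already contained in Lemma~\ref{feasibility} and the line-search lemmas, and the only remaining effort is to ensure matrix well-definedness at iteration $k$ when $x^k\in\rmint\cK$, and to thread everything through an induction on $k$.
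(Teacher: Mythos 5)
Your proposal is correct and follows essentially the same route as the paper: citing Lemma~\ref{feasibility} for $x^k\in\cS\subseteq\Omega^{\rm o}$, observing $\|P_k^T\nabla\phi_\mu(x^k)\|=\|\nabla f(x^k)+A^T\lambda_k^{(1)}+\mu\nabla B(x^k)\|_{x^k}^*>(1-\beta)\mu$ to get $g\ne 0$ so that Algorithm~\ref{alg:capped-CG} applies, invoking Theorem~\ref{rand-Lanczos} in the other case, and using Lemmas~\ref{lem:SOL-step-lowerbd2}--\ref{lem:NC-2nd-order-lowerbd} for the line search. Your explicit induction re-derives parts of Lemma~\ref{feasibility} (the $\|x^{k+1}-x^k\|_{x^k}\le\beta$ and $AP_kd^k=0$ computations) and makes the positive-definiteness of $\nabla^2B(x^k)$ explicit, which the paper leaves implicit, but the substance is the same.
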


\begin{proof}
From Lemma \ref{feasibility}, we know that each iterate $x^k$ generated by Algorithm \ref{alg:BNCG} satisfies that $x^k\in\cS$, which together with \eqref{level-bdd} implies that  $x^k\in\Omega^{\rm o}$, that is, $Ax^k=b$ and $x^k\in\rmint \cK$. It remains to show that each iteration of Algorithm \ref{alg:BNCG} is well-defined. To this end, suppose that $x^k$ is generated at some iteration $k$ of Algorithm \ref{alg:BNCG} and the algorithm is not terminated yet at $x^k$.  It suffices to show that the next iterate $x^{k+1}$ is successfully generated. Indeed,  since Algorithm \ref{alg:BNCG} is not terminated yet at $x^k$, then one of the following two cases must occur. As seen below,  the direction $d^k$ is successfully obtained regardless of which case occurs.

Case 1) $\min\{\|\nabla f(x^k)+A^T\lambda_k^{(1)}+\mu\nabla B(x^k)\|_{x^k}^*,\|\nabla f(x^k)+A^T\lambda_k^{(2)}+\mu\nabla B(x^{k-1})\|_{x^k}^*\}>(1-\beta)\mu$, where $\lambda_k^{(1)}$ and $\lambda_k^{(2)}$ are defined in Algorithm \ref{alg:BNCG}.  It then follows that 
\beq \label{1st-cond}
\|\nabla f(x^k)+A^T\lambda_k^{(1)}+\mu\nabla B(x^k)\|_{x^k}^*>(1-\beta)\mu.
\eeq
Claim that $P_k^T\nabla\phi_{\mu}(x^k)\neq 0$. Indeed, notice from Algorithm \ref{alg:BNCG} that $\lambda^{(1)}_k= R_k\nabla\phi_{\mu}(x^k)$. By this,  \eqref{barrier-prob}, \eqref{Mk}, \eqref{eq:relation-Pk-Rk} and \eqref{1st-cond}, one has that 
\[
\ba{l}
\|P_k^T\nabla\phi_{\mu}(x^k)\|\overset{\eqref{eq:relation-Pk-Rk}}{=}\|M_k^T(I+A^TR_k)\nabla\phi_{\mu}(x^k)\| 
\overset{\eqref{Mk}}{=}\|\nabla\phi_{\mu}(x^k)+A^TR_k\nabla\phi_{\mu}(x^k)\|_{x^k}^* \\ [4pt]
=\|\nabla f(x^k)+A^T\lambda^{(1)}_k+\mu \nabla B(x^k)\|_{x^k}^*\overset{ \eqref{1st-cond}}{>}(1-\beta)\mu,
\ea
\]
and hence $P_k^T\nabla\phi_{\mu}(x^k)\neq 0$ as claimed. Since $P_k^T\nabla\phi_{\mu}(x^k)\neq 0$, it follows from Theorem \ref{lem:capped-CG-cmplxity} that $\widehat d^k$ can be obtained from applying Algorithm \ref{alg:capped-CG} to \eqref{linsys} with $H=P_k^T\nabla^2\phi_\mu(x^k)P_k$, $\varepsilon=\sqrt{\epsilon}$, $g=P_k^T\nabla\phi_\mu(x^k)$. 
The direction $d^k$ is then obtained from $\widehat d^k$ according to \eqref{dk-nc} or \eqref{dk-sol}.

Case 2) $v^TP_k^T\nabla^2 f(x^k)P_kv<-\sqrt{\epsilon}/2$ for some unit vector $v$ returned from calling Algorithm \ref{pro:meo} with $H=P_k^T\nabla^2 f(x^k)P_k$ and  $\varepsilon=\sqrt{\epsilon}$. In this case, the direction $d^k$ is obtained from $v$ according to \eqref{dk-2nd-nc}.

In addition, it follows from Lemmas \ref{lem:SOL-step-lowerbd2}-\ref{lem:NC-2nd-order-lowerbd} that the step length $\alpha_k$ is well-defined. Hence, the next iterate $x^{k+1}$ is successfully generated by $x^{k+1}=x^k+\alpha_k P_kd^k$
\end{proof}

In the next theorem we establish iteration complexity results for Algorithm \ref{alg:BNCG}.

\begin{theorem} \label{iter-complexity}
Let
{
\begin{eqnarray}
K_1&=&\left\lceil \frac{\phi^0-\underline{\phi}}{ \min\{c_{\text{sol}},c_{\text{nc}}\}}\epsilon^{-3/2}\right\rceil+\left\lceil\frac{64(\phi^0-\underline{\phi})}{c_{\text{nc}}}\epsilon^{-3/2}\right\rceil+1,\label{K1}\\
K_2&=&\left\lceil\frac{64(\phi^0-\underline{\phi})}{c_{\text{nc}}}\epsilon^{-3/2}\right\rceil+1, \label{K2}
\end{eqnarray}
} 
where $\phi^0=f(x^0)+\bar \mu \max\{B(x^0),0\}$, $\bar \mu$ and $\underline{\phi}$ are given in Assumption \ref{main-assump},  and $c_{\text{sol}}$ and $c_{\text{nc}}$ are defined in \eqref{csol} and \eqref{cnc}, respectively. 
 Then the following statements hold.
\begin{enumerate}[{\rm (i)}]
\item The total number of calls of Algorithm \ref{pro:meo} in Algorithm \ref{alg:BNCG} is at most $K_2$.
\item The total number of calls of Algorithm \ref{alg:capped-CG} in Algorithm \ref{alg:BNCG} is at most $K_1$.
\item  Algorithm \ref{alg:BNCG} terminates in at most $K_1+K_2$ iterations. Its output $x^k$ is a deterministic $\epsilon$-first-order stationary point for some $k\le K_1+K_2 $.  Moreover, it is an  $(\epsilon,\sqrt{\epsilon})$-second-order stationary point with probability at least  
$1-\sqrt{2.75n}\delta^{\|H_k\|^{-1/2}}$, which is bounded below by $1-\sqrt{2.75n}\delta^{U_H^{-1/2}}$, where $H_k=P_k^T\nabla^2 f(x^k)P_k$ and $U_H$ is given in \eqref{Ugh}.
\end{enumerate}
\end{theorem}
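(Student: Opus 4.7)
The proof combines a potential-reduction argument for $\phi_\mu$ with a bookkeeping step that pairs ``unsuccessful'' Newton-CG iterations with subsequent minimum-eigenvalue calls. I would first classify every non-terminating iteration $k$ of Algorithm~\ref{alg:BNCG} into four types: type (a), where Algorithm~\ref{alg:capped-CG} returns an NC direction; type (b), where it returns a SOL direction with sufficient descent (Cases~2 or~3 in the proof of Lemma~\ref{lem:SOL-step-lowerbd2}(ii)); type (c), where it returns a SOL direction without sufficient descent (Case~1, in which \eqref{optcond:1st-iterate-k+1} is forced at $x^{k+1}$); and type (d), where Algorithm~\ref{pro:meo} returns an NC direction. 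By Lemmas~\ref{lem:NC-step-lowerbd}, \ref{lem:SOL-step-lowerbd2}, and \ref{lem:NC-2nd-order-lowerbd}, iterations of types (a), (b), (d) decrease $\phi_\mu$ by at least $c_{\text{nc}}\epsilon^{3/2}$, $c_{\text{sol}}\epsilon^{3/2}$, and $c_{\text{nc}}\epsilon^{3/2}/64$, respectively; type (c) offers no direct descent, but arises only when $\alpha_k=1$ and d$\_$type=SOL, so Algorithm~\ref{alg:BNCG}'s update rule sets $\lambda_{k+1}^{(2)}$ to exactly the multiplier supplied by Lemma~\ref{lem:SOL-step-lowerbd1}. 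The second argument of the $\min$-check at iteration $k+1$ then equals $\|\nabla f(x^{k+1})+A^T\lambda_{k+1}^{(2)}+\mu\nabla B(x^k)\|_{x^{k+1}}^*\le(1-\beta)\mu$, forcing that iteration into the else branch and hence into an Algorithm~\ref{pro:meo} call.

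Next, $\phi_\mu(x^0)\le\phi^0$ follows from $\mu\le\bmu$ by splitting on the sign of $B(x^0)$, and Assumption~\ref{main-assump}(a) gives $\phi_\mu(x^k)\ge\underline\phi$ along the iterates; hence the total decrease of $\phi_\mu$ is at most $\phi^0-\underline\phi$. Summing the per-iteration drops over types (a), (b), (d), the combined count of types~(a) and~(b) is at most $\lceil(\phi^0-\underline\phi)/(\min\{c_{\text{sol}},c_{\text{nc}}\}\epsilon^{3/2})\rceil$, while the count of type~(d) is at most $\lceil 64(\phi^0-\underline\phi)/(c_{\text{nc}}\epsilon^{3/2})\rceil$. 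Adding the at-most-one terminating MEO call gives statement~(i). Because each type-(c) iteration is mapped injectively to a distinct subsequent MEO call by the observation above, the number of type-(c) iterations is bounded by $K_2$, and combining with the bound on types~(a)+(b) gives statement~(ii). Since every iteration invokes exactly one oracle, the total iteration count is at most $K_1+K_2$.

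For the optimality at termination, feasibility $Ax^k=b$, $x^k\in\rmint\cK$ is already Theorem~\ref{alg-well-defined}. Entering the else branch yields some $\lambda\in\{\lambda_k^{(1)},\lambda_k^{(2)}\}$ and $\widetilde x\in\{x^k,x^{k-1}\}$ with $\|\nabla f(x^k)+A^T\lambda+\mu\nabla B(\widetilde x)\|_{x^k}^*\le(1-\beta)\mu$. Transporting the bound to the norm at $\widetilde x$ via \eqref{ineq:dual-local-norm-iterate-ppty} (using \eqref{xk-change} when $\widetilde x=x^{k-1}$), Lemma~\ref{lem:barrier-property}(vi) scaled by $\mu$ yields $\nabla f(x^k)+A^T\lambda\in\cK^*$. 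A triangle inequality, Lemma~\ref{lem:barrier-property}(i), and (once more) \eqref{ineq:dual-local-norm-iterate-ppty} produce $\|\nabla f(x^k)+A^T\lambda\|_{x^k}^*\le(1-\beta)\mu+\mu\sqrt{\vartheta}\max\{1,(1-\beta)^{-1}\}\le\epsilon$, the final inequality being exactly the reason $\mu$ is defined as $(1-\beta)\epsilon/[2((1-\beta)^2+\sqrt{\vartheta})]$; this yields the $\epsilon$-FOSP property deterministically. For the second-order inequality \eqref{inexact-2nd-order-old}, I would write any $d$ with $Ad=0$ as $d=P_ku$ with $u=M_k^{-1}d$ (well-defined since $M_k$ is a nonsingular factor of the positive-definite matrix $[\nabla^2 B(x^k)]^{-1}$); then $AM_ku=Ad=0$ gives $Q_ku=u$, so $\|u\|^2=u^TQ_ku=\|d\|_{x^k}^2$, and the MEO certificate $P_k^T\nabla^2 f(x^k)P_k\succeq-\sqrt{\epsilon}\,I$ becomes $d^T\nabla^2 f(x^k)d\ge-\sqrt{\epsilon}\,\|d\|_{x^k}^2$, which is equivalent to \eqref{inexact-2nd-order-old}. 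The probability bound $1-\sqrt{2.75n}\delta^{\|H_k\|^{-1/2}}\ge 1-\sqrt{2.75n}\delta^{U_H^{-1/2}}$ follows from $\|H_k\|\le\|\nabla^2 f(x^k)\|_{x^k}^*\le U_H$ by \eqref{Mk} and \eqref{U-bounds}, together with the monotonicity of $\delta^{(\cdot)}$ for $\delta\in(0,1)$.

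The main obstacle is the pairing argument in the first paragraph: one must verify that the ``no descent'' outcome of Lemma~\ref{lem:SOL-step-lowerbd2}(ii) coincides precisely with the regime (d$\_$type=SOL with $\alpha_k=1$) in which Algorithm~\ref{alg:BNCG} refreshes $\lambda_{k+1}^{(2)}$ from $x^k$-information, so that \eqref{optcond:1st-iterate-k+1} has exactly the form tested by the second argument of the $\min$ at iteration $k+1$. A secondary delicacy is the calibration of $\mu$, which must simultaneously bound $\mu[(1-\beta)+\sqrt{\vartheta}]$ (case $\widetilde x=x^k$) and $\mu[(1-\beta)^2+\sqrt{\vartheta}]/(1-\beta)$ (case $\widetilde x=x^{k-1}$) by $\epsilon$; the prescribed value of $\mu$ makes the latter exactly $\epsilon/2$, pinning down the explicit constant used in the algorithm.
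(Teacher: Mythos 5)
Your proposal is correct and follows essentially the same approach as the paper: a potential-reduction argument for $\phi_\mu$ bounding the descent iterations, combined with the observation that a SOL step with $\alpha_k=1$ and $\|d^k\|<c_d\sqrt{\epsilon}$ forces the approximate first-order condition to hold at $x^{k+1}$ via the freshly updated $\lambda^{(2)}_{k+1}$, thereby triggering an Algorithm~\ref{pro:meo} call at the next iteration. The paper packages parts (i) and (ii) as contradiction arguments while you perform direct counting over a four-way classification of iterations, but the underlying inequalities and the injective pairing of ``no-descent'' SOL iterations with subsequent minimum-eigenvalue-oracle calls are identical; the termination analysis (transport of the local-norm bound to $\widetilde{x}$, Lemma~\ref{lem:barrier-property}(iv) and (vi) to get membership in $\cK^*$, the calibration of $\mu$ yielding $\epsilon/2$, the change of variables $d=P_ku$ with $Q_ku=u$ for the second-order condition, and $\|H_k\|\le U_H$) also matches.
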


\begin{proof}
(i) Suppose for contradiction that the total number of calls of Algorithm \ref{pro:meo} in Algorithm \ref{alg:BNCG} is more than $K_2$. Observe from Algorithm \ref{alg:BNCG} that each of these calls except the last one returns a sufficient negative curvature direction. Hence, these calls would totally return at least $K_2$ 
sufficient negative curvature directions. {In addition, recall from Lemma~\ref{lem:NC-2nd-order-lowerbd}(ii) that each of such directions results in a reduction on the function value of $\phi_\mu$ at least by $c_{\text{nc}}\epsilon^{3/2}/64$.}
Also, since $\mu \le \bar \mu$ and $x^k \in \cS$,  one can observe that 
\[
\phi_\mu(x^0) = f(x^0)+\mu B(x^0) \le f(x^0)+\bar \mu \max\{B(x^0),0\} = \phi^0, \qquad \phi_\mu(x^k) \ge \underline \phi \quad \forall k \in \bbK, 
\]
where $\bbK$ is given in Lemma \ref{feasibility}. Besides, notice that $\{\phi_\mu(x^k)\}_{k\in \bbK}$ is descent. Based on these observations, one would have 
\[
{
 K_2c_{\text{nc}}\epsilon^{3/2}/64} \le \sum\limits_{k\in\bbK} [\phi_\mu(x^k)-\phi_\mu(x^{k+1})]\le \phi_{\mu}(x^0) - \underline{\phi} \le \phi^0 - \underline{\phi},
\]
which contradicts with \eqref{K2}. Hence, statement (i) holds.

(ii) Suppose for contradiction that the total number of calls of Algorithm \ref{alg:capped-CG} in Algorithm \ref{alg:BNCG} is more than $K_1$. By statement (i) and Algorithm \ref{alg:BNCG}, one can observe that the total number of calls of Algorithm \ref{alg:capped-CG} that produce an iterate $x^k$ satisfying $\|\nabla f(x^k)+A^T\lambda_k^{(1)}+\mu\nabla B(x^k)\|_{x^k}^* \le(1-\beta)\mu$ or $\|\nabla f(x^k)+A^T\lambda_k^{(2)}+\mu\nabla B(x^{k-1})\|_{x^k}^* \le (1-\beta)\mu$ is at most $K_2$. 
 Using these, and Lemmas \ref{lem:SOL-step-lowerbd2} and \ref{lem:NC-step-lowerbd}, one can further observe that the total number of iterations of Algorithm~\ref{alg:BNCG}, at which Algorithm~\ref{alg:capped-CG} is called and the next iterate reduces the function value of $\phi_\mu$ at least by $\min\{c_{\rm sol},c_{\rm nc}\}\epsilon^{3/2}$, would be at least $K_1-K_2+1$. Combining these observations with the fact that $\{\phi_\mu(x^k)\}_{k\in \bbK}$ is descent, one then would have 
\[
{
(K_1-K_2+1)\min\{c_{\text{sol}},c_{\text{nc}}\}\epsilon^{3/2}}\le \sum\limits_{k\in\bbK} [\phi_\mu(x^k)-\phi_\mu(x^{k+1})] \le \phi_{\mu}(x^0) - \underline{\phi} \le \phi^0 - \underline{\phi},
\]
where $\bbK$ is given in Lemma \ref{feasibility}. This together with \eqref{K2} leads to a contradiction with \eqref{K1}.

(iii) Since either Algorithm \ref{alg:capped-CG} or Algorithm \ref{pro:meo} is called at each iteration of 
Algorithm \ref{alg:BNCG}, it follows from statements (i) and (ii) that Algorithm \ref{alg:BNCG} terminates in at most $K_1+K_2$ iterations. Suppose that Algorithm \ref{alg:BNCG} terminates at iteration $k$ for some $k \le K_1+K_2$. 
One can observe from Algorithm \ref{alg:BNCG} and Theorem \ref{rand-Lanczos} that 
\begin{equation}\label{1st-stopping}
\|\nabla f(x^k)+A^T\lambda^k+\mu\nabla B(\widetilde{x})\|_{x^k}^* \le (1-\beta)\mu
\end{equation}
for some $(\widetilde{x},\lambda^k)\in\{(x^k,\lambda_k^{(1)}),(x^{k-1},\lambda_k^{(2)})\}$, and additionally, $\lambda_{\min}(P_k^T\nabla^2 f(x^k)P_k)\ge-\sqrt{\epsilon}$ holds with a probability at least $1-\sqrt{2.75n}\delta^{\|H_k\|^{-1/2}}$, where  $H_k=P_k^T\nabla^2 f(x^k)P_k$. In addition, it follows from \eqref{xk-change} and the definition of $\widetilde{x}$  that $\|x^{k}-\widetilde{x}\|_{\widetilde{x}}\le\beta$.
By these and Lemma \ref{lem:barrier-property}(iv), one has 
$
\|\nabla f(x^k)+A^T\lambda^k+\mu\nabla B(\widetilde{x})\|_{\widetilde{x}}^*\le  \mu,
$
which yields
\[
\|(\nabla f(x^k)+A^T\lambda^k)/\mu+\nabla B(\widetilde{x})\|_{\widetilde{x}}^*\le 1.
\]
Using this and Lemma \ref{lem:barrier-property}(vi), we have $(\nabla f(x^k)+A^T\lambda^k)/\mu\in\cK^*$. Hence, \eqref{inexact-1st-order-1} holds for $(x^k,\lambda^k)$.
 We next show that \eqref{inexact-1st-order-2} also holds for $(x^k,\lambda^k)$. Indeed,  by $\widetilde{x}\in\rmint \cK$, $\|x^k-\widetilde{x}\|_{\widetilde{x}}\le\beta$, and Lemma \ref{lem:barrier-property}(i) and (iv), one has that 
\[
\|\nabla B(\widetilde{x})\|_{x^k}^*\le(1-\beta)^{-1}\|\nabla B(\widetilde{x})\|_{\widetilde{x}}^*=(1-\beta)^{-1} \sqrt{\vartheta}.
\]
By this, \eqref{1st-stopping}, and $\mu=(1-\beta)\epsilon/[2((1-\beta)^2+\sqrt{\vartheta})]$, one has that
\[
\begin{array}{l}
\|\nabla f(x^k)+A^T\lambda^k\|_{x^k}^*\le\|\nabla f(x^k)+ A^T\lambda^k +\mu \nabla B(\widetilde{x})\|_{x^k}^*+\mu\|\nabla B(\widetilde{x})\|_{x^k}^*\\[5pt]
\le (1-\beta)\mu+\frac{\mu\sqrt{\vartheta}}{1-\beta}=\frac{(1-\beta)^2+\sqrt{\vartheta}}{1-\beta}\mu={\epsilon/2} <\epsilon,
\end{array}
\]
and hence \eqref{inexact-1st-order-2} holds for $(x^k,\lambda^k)$ as desired. In addition, we know from Theorem \ref{alg-well-defined} that $Ax^k=b$ and $x^k\in\rmint\cK$. Combining these results, we conclude that $x^k$ is a deterministic $\epsilon$-first-order stationary point. Finally, recall that 
$H_k=P_k^T\nabla^2 f(x^k)P_k$, $P_k=M_kQ_k$, $\|Q_k\|=1$, and $x^k\in\cS$. In view of these, \eqref{M-norm}, \eqref{Mk} and \eqref{Ugh}, one has that
\[
\ba{lcl}
\|H_k\|&=&\|P_k^T\nabla^2 f(x^k)P_k\| \le \|M_k^T\nabla^2 f(x^k)M_k\| = \max\limits_{\|u\|_{x^k} \le 1} \|M_k^T\nabla^2 f(x^k)u\| \\ [10pt]
&=& \max\limits_{\|u\|_{x^k} \le 1} \|\nabla^2 f(x^k)u\|^*_{x^k} = \|\nabla^2 f(x^k)\|^*_{x^k} \le U_H.
\ea
\]
 Hence, we have $1-\sqrt{2.75n}\delta^{\|H_k\|^{-1/2}} \ge 1-\sqrt{2.75n}\delta^{U_H^{-1/2}}$. 
\end{proof}

\begin{remark}
From Theorem \ref{iter-complexity}, one can see that Algorithm \ref{alg:BNCG} has an iteration complexity of $\cO(\epsilon^{-3/2})$ for finding an $(\epsilon, \sqrt{\epsilon})$-second-order stationary point of problem \eqref{conic-prob}, which matches the  best known iteration complexity achieved by the methods \cite{AABHM17,BM17QC,CDHS17,CGT11ARC,CRRW21trust,CRS16trust,CRS19iN,HLY19,MR17trust,NP06cubic,OW21,ROW20,RW18} for finding an $(\epsilon, \sqrt{\epsilon})$-second-order stationary point of problem \eqref{unconstr-opt},  \eqref{boxconstr-prob} or \eqref{lpconstr-prob}.
\end{remark}

\subsection{Operation complexity} \label{oper-complex}

In this subsection we discuss operation complexity of Algorithm \ref{alg:BNCG} for solving problem \eqref{conic-prob}, which is measured by its total main operations that depend on the type of the cone $\cK$.

Notice that Algorithm~\ref{pro:meo} with $H=P_k^T\nabla^2f(x^k)P_k$ or Algorithm \ref{alg:capped-CG} with $H=P_k^T\nabla^2\phi_\mu(x^k)P_k$ is called at iteration $k$ of Algorithm \ref{alg:BNCG}.  Also, observe that the main operation of Algorithms \ref{pro:meo} and \ref{alg:capped-CG} per iteration is the product of $H$ and a vector $v$. 
In addition,  $\lambda^{(1)}_k$, $\lambda^{(2)}_k$, $\|\nabla f(x^k)+A^T\lambda_k^{(1)}+\mu\nabla B(x^k)\|_{x^k}^*$ and $\|\nabla f(x^k)+A^T\lambda_k^{(2)}+\mu\nabla B(x^{k-1})\|_{x^k}^*$  need to be computed at iteration $k$ of Algorithm \ref{alg:BNCG}.  However, one can observe that their computational cost is no higher than that of the product of $H$ and $v$. 
Also, it is clear that the computational cost of the product of $P_k^T\nabla^2f(x^k)P_k$ and $v$ 
is no higher than that of the product of $P_k^T\nabla^2\phi_\mu(x^k)P_k$ and $v$. Thus,  we only focus on the product of $H$ and $v$ with $H=P_k^T\nabla^2\phi_\mu(x^k)P_k$. We now discuss how to compute $Hv$ by utilizing the structure of $P_k^T\nabla^2\phi_\mu(x^k)P_k$. In view of \eqref{Mk}, \eqref{Qk} and \eqref{Pk}, one has 
\[
\ba{l}
Hv=P_k^T\nabla^2 \phi_\mu(x^k)P_kv= Q_kM^T_k\nabla^2 f(x^k)M_kQ_kv+\mu Q_kM^T_k\nabla^2 B(x^k)M_kQ_kv \\ [8pt]
=Q_kM^T_k\nabla^2 f(x^k)M_kQ_kv+\mu Q_kv = v^5+\mu v^1,
\ea
\]
where
\[
v^1=Q_kv, \quad v^2=M_kv^1, \quad v^3=\nabla^2 f(x^k)v^2, \quad v^4=M^T_kv^3, \quad v^5=Q_kv^4.
\]
Thus, the computation of $Hv$ is broken into that of $v^i$ for $1 \le i \le 5$. In what follows, we discuss  how to compute them, and also analyze their associated operation cost.

\bi
\item[1.] Notice that $v^1$ and $v^5$ are both a product of $Q_k$ and a vector. Let us consider computing $Q_ku$ for some vector $u$. By \eqref{Qk}, one has
\[
Q_ku = (I-M_k^TA^T(AM_kM_k^TA^T)^{-1}AM_k)u=u-M_k^TA^T(AM_kM_k^TA^T)^{-1}AM_ku=u-u^5,
\]
where
\[
u^1=M_ku, \quad u^2=Au^1,\quad u^3=(AM_kM_k^TA^T)^{-1}u^2,\quad u^4=A^Tu^3, \quad u^5=M_k^Tu^4.
\]
Observe that $AM_kM_k^TA^T$ can be computed by $N=M_k^TA^T$ and $AM_kM_k^TA^T=N^TN$. The computation of $N=M_k^TA^T$ involves $m$  products of $M_k^T$ and a vector. Once  $N$ is available, the operation cost of computing $N^TN$ is $\cO(m^2n)$. In addition, when $AM_kM_k^TA^T$ and $u^2$ are available, the operation cost of computing $u^3$ is $\cO(m^3)$. Also, once  $u^1$ and $u^3$ are available, the operation cost of computing $u^2$ and $u^4$ is $\cO(mn)$. By these observations and the fact that $m\le n$, one can see that the main operation of computing
$Q_ku$ consists of {\it $m+2$ products of $M_k$ or $M_k^T$ and a vector}, and also {\it one product of an $m\times n$ matrix and its transpose.} 
\item[2.] Based on the above discussion, one can observe that the computation of $v^1$, $v^2$, $v^4$, and $v^5$ involves $2m+6$ products of $M_k$ or $M_k^T$ and a vector in total.
We now discuss how to compute the product of $M_k$ or $M_k^T$ and a vector. Observe from \eqref{Mk} that $\nabla^2 B(x^k)=M_k^{-T} M_k^{-1}$. 
Thus, $M_k^{-T}$ can be obtained as the Cholesky factor of $\nabla^2 B(x^k)$, which is computed only once in each iteration of Algorithm~\ref{alg:BNCG}. 
Once $M_k^{-T}$ is available, the product of $M_k$ or $M_k^T$ and a vector can be computed by applying backward or forward substitution  to a linear system with coefficient matrix $M_k^{-1}$ or $M_k^{-T}$.
\item[3.] Once $v^2$ is available, the computation of $v^3$ only involves the product of $\nabla^2 f(x^k)$ and $v^2$.
\ei

Consequently, once the Cholesky factor $M_k^{-T}$ of $\nabla^2 B(x^k)$ is computed in each iteration of Algorithm~\ref{alg:BNCG}, the main computation of $Hv$ consists of:
\bi
\item $2m+6$ backward or forward substitutions to a linear system with coefficient matrix $M_k^{-1}$ or $M_k^{-T}$; 
\item one product of an $m\times n$ matrix and its transpose; 
\item one product of $\nabla^2 f(x^k)$ and a vector.
\ei

When $\cK$ is the nonnegative orthant, its associated LHSC is $B(x)=-\sum^n_{i=1}\ln x_i$ and $\nabla^2 B(x^k)$ is a diagonal matrix. The operation cost of the Cholesky factorization of $\nabla^2 B(x^k)$ is $\cO(n)$. In addition, the operation cost of $2m+6$ backward or forward substitutions to a linear system with coefficient matrix $M_k^{-1}$ or $M_k^{-T}$ is $\cO(mn)$. Thus, the main operation of computing $Hv$ consists of one product of an $m\times n$ matrix and its transpose, and one product of $\nabla^2 f(x^k)$ and a vector.

When $\cK$ is a general cone, such as a second-order or semidefinite cone, the operation cost of the Cholesky factorization of $\nabla^2 B(x^k)$ (including the evaluation of $\nabla^2 B(x^k)$) is typically at least $\cO(n^3)$. In addition, the operation cost of $2m+6$ backward or forward substitutions to a linear system with coefficient matrix $M_k^{-1}$ or $M_k^{-T}$ is $\cO(mn^2)$.

The above discussion and Theorems~\ref{lem:capped-CG-cmplxity}(ii), \ref{rand-Lanczos} and \ref{iter-complexity} lead to the following operation complexity results for Algorithm \ref{alg:BNCG}, which are represented by its total main operations that depend on the type of the cone $\cK$.

\begin{theorem}\label{thm:operation-cmplxty}
Let $K_1$ and $K_2$ be given in \eqref{K1} and \eqref{K2}, respectively, and let 
\[
\bar N=\widetilde{\cO}\left(\min\{n,\epsilon^{-1/4}\} K_1+\min\left\{n,1+\left\lceil \epsilon^{-1/4}\ln\delta^{-1}\right\rceil\right\}K_2\right).
\]
 Then the following statements hold.
\begin{enumerate}[{\rm (i)}]
\item When $\cK$ is the nonnegative orthant, the total main operations of Algorithm \ref{alg:BNCG} consist of  $\bar N$ Hessian-vector products of $f$ and $\bar N$ products of an $m\times n$ matrix and its transpose. 
\item When $\cK$ is a general cone, the total main operations of Algorithm \ref{alg:BNCG} consist of $K_1+K_2$ Cholesky factorizations of the Hessian of $B$, 
$\bar N$ Hessian-vector products of $f$, 
and $(2m+6)\bar{N}$ backward or forward substitutions to a linear system with a lower or upper triangular coefficient matrix.   
\end{enumerate}
\end{theorem}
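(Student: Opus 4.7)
The plan is to combine the per-call iteration counts of the two inner routines, Algorithm~\ref{alg:capped-CG} and Algorithm~\ref{pro:meo}, with the per-$Hv$ decomposition established in the paragraph just above the theorem, and then multiply by the bounds $K_1$ and $K_2$ on the total number of calls to these routines established in Theorem~\ref{iter-complexity}(i)--(ii).

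First I would count the total number of matrix-vector products with a matrix $H$ of the form $P_k^T\nabla^2\phi_\mu(x^k)P_k$ (inside Algorithm~\ref{alg:capped-CG}) or $P_k^T\nabla^2 f(x^k)P_k$ (inside Algorithm~\ref{pro:meo}). By Theorem~\ref{lem:capped-CG-cmplxity}(ii) with $\varepsilon=\sqrt{\epsilon}$, each call to Algorithm~\ref{alg:capped-CG} performs at most $\widetilde{\cO}(\min\{n,\epsilon^{-1/4}\})$ such products; by Theorem~\ref{rand-Lanczos} each call to Algorithm~\ref{pro:meo} performs at most $\min\{n,1+\lceil \epsilon^{-1/4}\ln\delta^{-1}\rceil\}$ such products. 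Combined with the at most $K_1$ calls to Algorithm~\ref{alg:capped-CG} and the at most $K_2$ calls to Algorithm~\ref{pro:meo} coming from Theorem~\ref{iter-complexity}, the total number of such $Hv$ products is at most $\bar N$. The other per-outer-iteration operations of Algorithm~\ref{alg:BNCG}, such as forming $\lambda_k^{(1)},\lambda_k^{(2)}$ and evaluating the two local-norm residuals used in the branching test, reduce to a constant number of similar operations per outer iteration and are absorbed into the same count.

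Next I would invoke the per-$Hv$ decomposition recorded just above the theorem: each such product requires one Hessian-vector product of $f$, $2m+6$ applications of $M_k$ or $M_k^T$ to a vector, and one product of an $m\times n$ matrix with its transpose. Each application of $M_k$ or $M_k^T$ reduces to a single backward or forward substitution with the triangular factor $M_k^{-T}$ of $\nabla^2 B(x^k)$. Since this factor depends only on the outer iterate $x^k$, its Cholesky factorization need be computed only once per outer iteration; by Theorem~\ref{iter-complexity}(iii) the total number of outer iterations is at most $K_1+K_2$, so the total number of Cholesky factorizations is at most $K_1+K_2$. Multiplying the remaining per-$Hv$ costs by the $\bar N$ bound from the previous step then yields statement~(ii).

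For statement~(i), when $\cK=\bR^n_+$ the LHSC barrier is $B(x)=-\sum_i \ln x_i$, so $\nabla^2 B(x^k)$ is diagonal; each Cholesky factorization costs $\cO(n)$ and each of the $2m+6$ triangular solves with $M_k^{-1}$ or $M_k^{-T}$ also costs $\cO(n)$. Both are dominated by the cost of a single product of an $m\times n$ matrix with its transpose, so the triangular solves and the Cholesky factorizations can be absorbed into that count, leaving only the $\bar N$ Hessian-vector products of $f$ and the $\bar N$ products of an $m\times n$ matrix with its transpose. The argument overall is careful bookkeeping layered on top of results already proved; the one subtlety worth flagging is to distinguish operations that must be repeated for every $Hv$ product (giving a factor of $\bar N$) from those that need to be performed only once per outer iteration (giving a factor of $K_1+K_2$), and to check that in the nonnegative-orthant case the per-outer-iteration operations on $\nabla^2 B(x^k)$ are truly dominated by the counted $m\times n$ matrix operations.
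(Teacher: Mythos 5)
Your proposal is correct and follows essentially the same route the paper takes: the paper does not write out a formal proof of Theorem~\ref{thm:operation-cmplxty} but simply states that it "follows from" the preceding per-$Hv$ cost decomposition together with Theorems~\ref{lem:capped-CG-cmplxity}(ii), \ref{rand-Lanczos}, and \ref{iter-complexity}, which is exactly the bookkeeping you carry out. Your accounting of when operations are repeated per $Hv$ product (giving the factor $\bar N$) versus once per outer iteration (giving the factor $K_1+K_2$ for Cholesky factorizations), your absorption of the $\lambda_k^{(1)},\lambda_k^{(2)}$ and local-norm-residual evaluations into the same count, and your treatment of the nonnegative-orthant simplification all match the paper's discussion.
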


\begin{remark}
Recall from Theorem~\ref{iter-complexity} that $K_1=\cO(\epsilon^{-3/2})$ and $K_2=\cO(\epsilon^{-3/2})$. In view of these and Theorem~\ref{thm:operation-cmplxty}, we observe that
\begin{enumerate}[{\rm (i)}]
\item when $\cK$ is the nonnegative orthant, Algorithm~\ref{alg:BNCG} achieves an operation complexity of $\widetilde{\cO}(\epsilon^{-3/2}\min\{n,\epsilon^{-1/4}\})$, measured by the amount of main operations consisting of Hessian-vector products of $f$ and also products of an $m\times n$ matrix and its transpose, for finding an $(\epsilon,\sqrt{\epsilon})$-second-order stationary point of \eqref{conic-prob} with high probability;

\item when $\cK$ is a general cone, Algorithm~\ref{alg:BNCG} requires at most $\cO(\epsilon^{-3/2})$ Cholesky factorizations of the Hessian of $B$ and $\widetilde{\cO}(\epsilon^{-3/2}\min\{n,\epsilon^{-1/4}\})$ other fundamental operations, consisting of Hessian-vector products of $f$ and backward or forward substitutions to a lower or upper triangular linear system, for finding an $(\epsilon,\sqrt{\epsilon})$-second-order stationary point of \eqref{conic-prob} with high probability.
\end{enumerate}
In addition, when $A=0$, $b=0$ and $\cK$ is the nonnegative orthant, the aforementioned operation complexity for Algorithm~\ref{alg:BNCG} matches the best known ones of second-order methods for finding an $(\epsilon,\sqrt{\epsilon})$-second-order stationary point of problem~\eqref{unconstr-opt} or \eqref{boxconstr-prob} with high probability (e.g., see \cite{CRRW21trust,OW21,ROW20}). 
\end{remark}

\section*{Appendix}

\appendix

\section{A capped conjugate gradient method} \label{appendix:capped-CG}

In this part we present a capped CG method proposed in \cite{ROW20} for finding either an approximate solution of \eqref{linsys} or a negative curvature direction of the matrix $H$, which has been discussed in  Subsection \ref{sec:capped-cg-meo}. The detailed motivation and explanation of this method can be found in  \cite{ROW20}.

\begin{algorithm}[h]
\caption{A capped conjugate gradient method}
\label{alg:capped-CG}
{\small
\begin{algorithmic}
\State \noindent\textit{Input}: Symmetric matrix $H\in\bR^{n\times n}$, vector $g\neq0$, damping parameter $\varepsilon\in(0,1)$, desired relative accuracy $\zeta\in(0,1)$.
\State \textit{Optional input:} scalar $U\ge0$ such that $\|H\|\le U$ (set to $0$ if not provided).
\State \textit{Output:} d$\_$type, $\hd$.
\State \textit{Secondary output:} final values of $U,\kappa,\widehat{\zeta},\tau,$ and $T$.
\State Set 
\begin{equation*}
\bar{H}:=H+2\varepsilon I,\quad \kappa:=\frac{U+2\varepsilon}{\varepsilon},\quad\widehat{\zeta}:=\frac{\zeta}{3\kappa},\quad\tau:=\frac{\sqrt{\kappa}}{\sqrt{\kappa}+1},\quad T:=\frac{4\kappa^4}{(1-\sqrt{\tau})^2},
\end{equation*}
$y^0\leftarrow 0,r^0\leftarrow g,p^0\leftarrow -g, j\leftarrow 0$.
\If {$(p^0)^T \bar{H}p^0<\varepsilon\|p^0\|^2$}
\State Set $\hd=p^0$ and terminate with d$\_$type = NC;
\ElsIf {\ $\|Hp^0\|>U\|p^0\|\ $}
\State Set $U\leftarrow\|Hp^0\|/\|p^0\|$ and update $\kappa,\widehat{\zeta},\tau, T$ accordingly;
\EndIf
\While{TRUE}
\State $\alpha_j\leftarrow (r^j)^T r^j/(p^j)^T\bar{H}p^j$; \{Begin Standard CG Operations\}
\State $y^{j+1}\leftarrow y^j+\alpha_jp^j$;
\State $r^{j+1}\leftarrow r^j+\alpha_j\bar{H}p^j$;
\State $\beta_{j+1}\leftarrow\|r^{j+1}\|^2/\|r^j\|^2$;
\State $p^{j+1}\leftarrow-r^{j+1}+\beta_{j+1}p^j$; \{End Standard CG Operations\}
\State $j\leftarrow j+1$;
\If {$\|Hp^j\|>U\|p^j\|$}
\State Set $U\leftarrow\|Hp^j\|/\|p^j\|$ and update $\kappa,\widehat{\zeta},\tau,T$ accordingly;
\EndIf
\If {\ $\|Hy^j\|>U\|y^j\|\ $}
\State Set $U\leftarrow\|Hy^j\|/\|y^j\|$ and update $\kappa,\widehat{\zeta},\tau,T$ accordingly;
\EndIf
\If {\ $\|Hr^j\|>U\|r^j\|\ $}
\State Set $U\leftarrow\|Hr^j\|/\|r^j\|$ and update $\kappa,\widehat{\zeta},\tau,T$ accordingly;
\EndIf
\If {$(y^j)^T\bar{H}y^j<\varepsilon\|y^j\|^2$}
\State Set $\hd\leftarrow y^j$ and terminate with d$\_$type = NC;
\ElsIf {\ $\|r^j\|\le\widehat{\zeta}\|r^0\|$}
\State Set $\hd\leftarrow y^j$ and terminate with d$\_$type = SOL;
\ElsIf{\ $(p^j)^T\bar{H}p^j<\varepsilon\|p^j\|^2$}
\State Set $\hd\leftarrow p^j$ and terminate with d$\_$type = NC;  
\ElsIf {\ $\|r^j\|>\sqrt{T}\tau^{j/2}\|r^0\| $}
\State Compute $\alpha_j, y^{j+1}$ as in the main loop above;
\State Find $i\in\{0,\ldots,j-1\}$ such that
\[
(y^{j+1}-y^i)^T\bar{H}(y^{j+1}-y^i)<\varepsilon\|y^{j+1}-y^i\|^2;
\]
\State Set $\hd\leftarrow y^{j+1}-y^i$ and terminate with d$\_$type = NC;
\EndIf
\EndWhile
\end{algorithmic}
}
\end{algorithm}

\end{document}